\DeclareMathOperator{\cat}{CAT}
\newenvironment{customthm}[1]
{\innercustomthm}
{\endinnercustomthm}
\newtheorem{thm}{Theorem}[section]
\newtheorem{proposition}[thm]{Proposition}
\newtheorem{lem}[thm]{Lemma}
\newtheorem{corollary}[thm]{Corollary}
\newtheorem*{thm*}{Theorem}
\newtheorem{prop}[thm]{Proposition}
\theoremstyle{definition}
\newtheorem{defn}[thm]{Definition}
\newtheorem{example}[thm]{Example}
\newtheorem{remark}[thm]{Remark}
\newtheorem{question}[thm]{Question}
\newtheorem*{Outline}{Outline}
\newtheorem*{ack}{Acknowledgements}
\newcommand{\B}{\mathcal{B}}
\newcommand{\W}{\mathcal{W}}
\renewcommand{\P}{\mathcal{P}}
\newcommand{\bP}{\mathbb{P}}
\newcommand{\A}{\mathcal{A}}
\newcommand{\G}{\mathcal{G}}
\newcommand{\X}{\overline{X}}
\newcommand{\N}{\mathbb{N}}
\newcommand{\R}{\mathbb{R}}
\newcommand{\Z}{\mathbb{Z}}
\renewcommand{\H}{\mathbb{H}}
\renewcommand{\R}{\mathcal{R}}
\newcommand{\F}{\mathcal{F}}
\newcommand{\D}{\mathcal{D}}
\newcommand{\C}{\mathcal{C}}
\newcommand{\E}{\mathbb{E}}
\newcommand{\T}{\mathcal{T}}
\newcommand{\bR}{\mathbb{bR}}
\renewcommand{\epsilon}{\varepsilon}
\newcommand{\bnd}{\partial}
\newcommand{\inverse}{^{-1}}
\newcommand{\lonely}{\partial_{I_L}}
\DeclareMathOperator{\Itin}{Itin}
\DeclareMathOperator{\Nerve}{Nerve}
\title{Boundaries of groups with isolated flats are path connected}
\date{}
\author{Michael Ben-Zvi}
\begin{document}
	
	\begin{abstract}
	A seminal result in geometric group theory is that a 1-ended hyperbolic group has a locally connected visual boundary. As a consequence, a 1-ended hyperbolic group also has a path connected visual boundary. In this paper, we study when this phenomenon occurs for CAT(0) groups. We show if a 1-ended CAT(0) group with isolated flats acts geometrically on a CAT(0) space, then the visual boundary of the space is path connected. As a corollary, we prove all CAT(0) groups with isolated flats are semistable at infinity.
	
	\end{abstract}

\maketitle
	\section{Introduction}
	
	We study $\cat(0)$ groups and the boundaries of the spaces on which they act. A group is $\cat(0)$ if it acts geometrically (properly discontinuously, cocompactly, and by isometries) on some $\cat(0)$ space. The motivating question for this article is the following:

\begin{question} \label{question: pconn}
	If $G$ acts geometrically on a 1-ended $\cat(0)$ space $X$, when is the visual boundary, $\bnd X$, path connected?
\end{question}

There are two main reasons to investigate this property for boundaries of $\cat(0)$ spaces. The first comes from the theory of hyperbolic groups. All 1-ended hyperbolic groups have locally connected boundaries  \cite{BestvinaMess,Swarup,BowCutPoints}. Some natural examples of non-hyperbolic $\cat(0)$ groups, such as $F_2\times \Z$, show that this phenomenon does not occur in general. However, a boundary which is connected and locally connected is also globally path connected. In this sense, path connectivity is the `next best thing' one can hope for in the boundary of a $\cat(0)$ group. At the same time, there are known examples of $\cat(0)$ groups which act on spaces with non-path connected visual boundaries (see Example \ref{example: Croke Kleiner} for one). Therefore, we want to find conditions on $G$ (or on $X$) which determine when $\bnd X$ is path connected.

Another reason to be interested in path connectivity is due to its relationship with semistability. A proper, 1-ended $\cat(0)$ space $X$ is \textit{semistable at infinity} if any two geodesic rays are properly homotopic. Being semistable at infinity is a quasi-isometry invarient, so showing a group is semistable at infinity comes down to finding an appropriate space on which this group acts. Geoghegan  conjectured that all $\cat(0)$ groups are semistable at infinity (in fact, he has conjectured that all finitely presented groups are semistable at infinity). Geoghegan also shows that given a $\cat(0)$ space $X$ if $\bnd X$ is path connected, then $X$ is semistable at infinity \cite{GeoghPconnSemistable}. Piecing all of this together, knowing that $G$ acts geometrically on a $\cat(0)$ space with a path connected visual boundary shows that $G$ is semistable at infinity.

The majority of this paper is dedicated to answering Question \ref{question: pconn} for a class of $\cat(0)$ groups which exhibit more hyperbolicity than others: $\cat(0)$ groups with isolated flats.  These groups are hyperbolic relative to a collection of flat stabilizers and therefore share many properties with hyperbolic groups. Hruska and Ruane have found necessary and sufficient conditions for a  1-ended $\cat(0)$ group with isolated flats to have a locally connected boundary \cite{HR17}. There are many examples which do not have locally connected boundaries, see Section \ref{section: examples} for a few. We therefore study the general case. In this paper, we prove the following:

\begin{customthm}{\ref{thm main}}Let $G$ be a 1-ended $\cat(0)$ group with isolated flats acting geometrically on $X$. Then $\bnd X$ is path connected.
\end{customthm} 

As an immediate corollary (via the results in \cite{GeoghPconnSemistable}) we have the following:

\begin{corollary}
	Let $G$ be a 1-ended $\cat(0)$ group with isolated flats. Then $G$ is semistable at infinity.
\end{corollary}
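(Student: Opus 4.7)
The corollary is an immediate consequence of Theorem \ref{thm main} combined with a theorem of Geoghegan already discussed in the introduction, so my plan is to assemble the two ingredients rather than to produce anything novel.

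Since $G$ is a $\cat(0)$ group, by definition it acts geometrically on some proper $\cat(0)$ space $X$; because $G$ is 1-ended and has isolated flats, so does this action, and hence Theorem \ref{thm main} applies and yields that $\bnd X$ is path connected. I would then invoke the theorem of Geoghegan \cite{GeoghPconnSemistable}, which asserts that whenever a proper $\cat(0)$ space has a path connected visual boundary, the space is semistable at infinity; combining this with the previous step gives that $X$ is semistable at infinity.

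Finally, as recalled in the introduction, semistability at infinity is a quasi-isometry invariant of proper geodesic spaces. By the Milnor--Schwarz lemma, $G$ equipped with any word metric on a finite generating set is quasi-isometric to $X$, so semistability at infinity transfers from $X$ to $G$, and the corollary follows. The only nontrivial ingredient in this chain is Theorem \ref{thm main} itself; the real obstacle is that preceding theorem and not this deduction, which is the reason the author labels the corollary as immediate.
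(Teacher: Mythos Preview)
Your argument is correct and matches the paper's own treatment: the paper does not give a separate proof of the corollary at all, merely stating it as an ``immediate corollary (via the results in \cite{GeoghPconnSemistable})'' of Theorem~\ref{thm main}. Your write-up simply makes explicit the two-step deduction (path connected boundary $\Rightarrow$ $X$ semistable, then quasi-isometry invariance transfers this to $G$) that the paper leaves implicit.
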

Other authors have some results which overlap with this corollary. For example, Mihalik and Swenson  show that many 1-ended relatively hyperbolic groups are semistable at infinity \cite{MihalikSwensonSemistable}. However, they do not cover all cases. They assume that their relatively hyperbolic groups have a trivial maximal peripheral splitting and many $\cat(0)$ groups with isolated flats have non-trivial maximal peripheral splittings. Hruska and Ruane  show that most groups which are hyperbolic relative to polycyclic groups are semistable at infinity \cite{HruskaRuaneSemistable}. They assume that these groups have no non-central elements of order 2. We require no assumptions on the $\cat(0)$ groups with isolated flats to conclude it is semistable at infinity.

Along the way to proving Theorem \ref{thm main} we introduce a combination theorem for $\cat(0)$ spaces and their boundaries:

\begin{customthm}{\ref{combo}}

		Let $X$ be a $\cat(0)$ space and $\B$ a block decomposition. Suppose the following conditions hold:
	
	\begin{enumerate}
		\item for each $B\in \B$, $\bnd B$ is path connected,
		\item for each $W\in \W$, $\bnd W$ is non-empty, and
		\item all irrational rays are lonely.
	\end{enumerate}
	Then $\bnd X$ is path connected.
\end{customthm}

For exact definitions of the terms in Theorem \ref{combo}, see Section \ref{section block decomp}. The example to keep in mind is when we have a $\cat(0)$ group $G$ which is an amalgamated product $G=A\ast_C B$. In many cases of interest, there is a natural `tree-of-spaces' $X$ on which $G$ acts geometrically and then Theorem \ref{combo} essentially says if (1) $A$ and $B$ act on spaces $X_A$ and $X_B$ with path connected boundaries, (2) $\bnd X$ is connected,  and (3) a technical condition related to how the boundaries of $X_A$ and $X_B$ interact in $X$, then $\bnd X$ is path connected. Amalgamating $\cat(0)$ groups is the primary method for forming new $\cat(0)$ groups. In order to fully understand what types of spaces can appear as the boundary of a $\cat(0)$ group, we need to understand properties of the boundaries of amalgams.

\begin{Outline}
	In Section \ref{section: examples}, we describe three key examples that we return to throughout the paper. Sections \ref{section visual boundary}, \ref{rel hyp section}, and \ref{section:isolated flats} provide the necessary background information on the visual boundary, relatively hyperbolic groups, and $\cat(0)$ groups with isolated flats. We formally define a block decompositions and prove Theorem \ref{combo} in Section \ref{section block decomp}. 
	
	The remaining sections build up to the proof of Theorem \ref{thm main}. In Section \ref{section:fixing a space}, for a given 1-ended $\cat(0)$ group $G$ with isolated flats, we use the group's maximal peripheral splitting (introduced by Bowditch in \cite{Bow01}), Hruska-Ruane's convex splitting theorem \cite{HR17}, and Bridson-Haefliger's Equivariant Gluing Theorem \cite{BH99} to fix a space $X$ on which $G$ acts geometrically. Hruska-Kleiner's work \cite{HK05} shows that if $G$ is $\cat(0)$ with isolated flats, then the boundary is well-defined. Therefore fixing a space on which $G$ acts does not affect the boundary. We then focus on a collection of subsets $Y_v\subset X$ which cover $X$. These subsets are defined at the end of Section \ref{section:fixing a space}. In Sections \ref{section:decomp}, \ref{section:proper and cocompact}, and \ref{section:local conn at flats} we show that $\bnd Y_v$ is path connected. Applying  Theorem \ref{combo} finishes the proof of Theorem \ref{thm main}. 
	
	Of potentially independent interest is the work in Section \ref{section:proper and cocompact}. Here, we establish a connection between neighborhoods of points in $\overline{F}$, the closure of a flat, and neighborhoods in  $\bnd Y_v \setminus F$. This work was done by Haulmark in the locally connected setting \cite{Ha17} and we expand the results to our setting.
	
\end{Outline}

\begin{ack}
	I would like to thank Kim Ruane for her help and guidance throughout this project. I would also like to thank Genevieve Walsh, Mike Mihalik, and Rob Kropholler for many interesting discussions and for feedback on my drafts.
\end{ack}

\section{Key Examples} \label{section: examples}

In this section, we outline three key examples that capture the difficulty in this work. Two are surface group amalgams and the third is a right-angled Coxeter group which is an amalgamation of groups which are virtually surface groups. All three examples are $\cat(0)$ groups with isolated flats and the boundaries of the spaces on which they act are not locally connected.

\begin{figure}[h]
	\includegraphics[width=.4\textwidth]{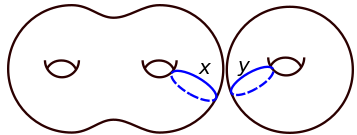}
	\centering
	\caption{A torus and a genus 2 surface with the identification $x=y$.}
	\label{figure: s2 torus}
\end{figure}

\begin{example} \label{example: s2 torus}
	Let $\tilde{X_1}$ be the surface amalgam seen in Figure \ref{figure: s2 torus}, $G_1$ its fundamental group, and $X_1$ its universal cover. This group is $\cat(0)$ with isolated flats, with the flats coming from the torus. $X_1$ consists of Euclidean and hyperbolic planes fitting together in a tree-like way. The boundary, $\bnd X_1$, is made up of circles coming from these two types of planes along with the boundary points of rays which pass through infinitely many such planes. It is clear that the union of the circles is path connected since adjacent planes share a pair of boundary points. Because of the isolated flats, if two based rays pass through the same infinite collection of planes, then the rays are asymptotic. Applying Theorem \ref{combo}, we can conclude that $\bnd X_1$ is path connected.
\end{example}

\begin{figure}[h]
	\centering
	\includegraphics[width=.4\textwidth]{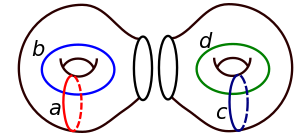}
	\caption{Two tori, each with a boundary component, with the identification $[a,b]^2=[c,d]^2$}
	\label{figure:tori with boundary}
	
\end{figure}

\begin{example}\label{example: punctured tori}
	Let $\tilde{X_2}$ be 2 tori with boundary components as identified in Figure \ref{figure:tori with boundary} and let $G_2$ be the fundamental group. By identifying $([a,b])^2$ with $([c,d])^2$, we create a Klein bottle group in $G$ generated by $[a,b]$ and $[c,d]$. This subgroup (and its conjugates) are the peripheral subgroups for the relatively hyperbolic structure on $G_2$. Since the Klein bottle group is virtually $\Z^2$, this makes $G_2$ $\cat(0)$ with isolated flats and with maximal peripheral splitting seen in Figure \ref{fig: max periph splitting G2}.
	
		\begin{figure}[h]
			\centering
		\begin{tikzpicture}
	
	\node[below] at (0,0) (A) {$K=\langle[a,b],[c,d]\rangle$};
	\node[below] at (-4,0) (B) {$F_2=\langle a,b\rangle$};
	\node[below] at (4,0) (C) {$F_2=\langle c,d \rangle$};
	\node[above] at (2,0) {$\Z=\langle [c,d] \rangle$};
	\node[above] at (-2,0) {$\Z=\langle [a,b]\rangle $};

	\draw[fill] (0,0) circle [radius=0.05];
	\draw[fill] (4,0) circle [radius=0.05];
	\draw[fill] (-4,0) circle [radius=0.05];
	\draw (-4,0) -- (0,0) -- (4,0);
	\end{tikzpicture}
	\caption{The maximal peripheral splitting of $G_2$}
	\label{fig: max periph splitting G2}
\end{figure}
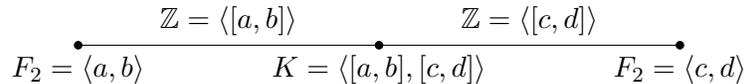
A $\cat(0)$ space $X_2$ on which $G_2$ acts geometrically can be constructed using Bridson and Haefliger's Equivariant Gluing Theorem \cite{BH99}. This construction captures the isolated flats explicitly since $X_2$ consists of Euclidean planes coming from the Klein bottle group and truncated hyperbolic planes coming from the free groups. Once again, these spaces fit together in a tree-like way and the boundary $\bnd X_2$ is made up of boundary points from the truncated hyperbolic planes, the Euclidean planes, and the rays which pass through infinitely many such spaces.

To see that $\bnd X_2$ is path connected, consider a truncated hyperbolic plane union every flat which shares a boundary point with it. Call this space $Y$. This is equivalent to taking a $F_2$-vertex $v$ of the Bass-Serre tree for the splitting and letting $Y$ be the of the vertex space for $v$ union the vertex spaces for each of the vertices of the Bass-Serre tree adjacent to $v$. It is clear that $X_2$ is the union of all such $Y$. To see that $\bnd Y$ is path connected, notice that it is a Cantor set coming from the truncated hyperbolic planes union a collection of circles coming from the Euclidean planes. These circle `fill in' the gaps between points of the Cantor set, essentially replacing the missing intervals with circles. Therefore $\bnd Y$ is a circle with a collection of extra arcs attached and is path connected. 

Since $\bnd Y_v$ is path connected for each $v$ in the Bass-Serre tree and $X$ is the union of all such $Y_v$, then the only points to check are those which come from rays traveling through infinitely many truncated hyperbolic and Euclidean plans. Once again, the isolated flats condition covers this and we can apply Theorem \ref{combo} to conclude $\bnd X_2$ is path connected.
\end{example}

\begin{figure}[h]
	\centering
	\begin{tikzpicture}
	\newdimen\r
	\r=1cm
	\draw (0:\r)--(60:\r)--(120:\r)--(180:\r)--(240:\r)--(300:\r)--(360:\r);
	\draw (330:1.7)--(300:\r)--(0:0)--(60:\r)--(30:1.7)--(0:2)--(330:1.7);
	
	\draw[fill] (0:\r) circle [radius=0.05];
	\draw[fill] (60:\r) circle [radius=0.05];
	\draw[fill] (120:\r) circle [radius=0.05];
	\draw[fill] (180:\r) circle [radius=0.05];
	\draw[fill] (240:\r) circle [radius=0.05];
	\draw[fill] (300:\r) circle [radius=0.05];
	\draw[fill] (360:\r) circle [radius=0.05];
	\draw[fill] (0:0) circle [radius=0.05];
	\draw[fill] (330:1.7) circle [radius=0.05];
	\draw[fill] (30:1.7) circle [radius=0.05];
	\draw[fill] (0:2) circle [radius=0.05];

	\end{tikzpicture}
	\caption{A Right-angled Coxeter group consisting of two hexagons sharing a pair of vertices.}
	\label{figure:RACG}
\end{figure}
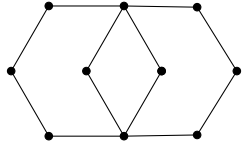

\begin{example}\label{example: RACG}
Let $G_3$ be the Right-angled Coxeter group in Figure \ref{figure:RACG} and let $X$ be the Davis complex for $G_3$. This group is $\cat(0)$ with isolated flats, with the flats coming from the square in the graph. Let $H$ be the RACG whose defining graph is a hexagon, then $G_3$ has the splitting seen in Figure \ref{figure: $G_3$ splitting}.

\begin{figure}[h]
	\centering
	
	\begin{tikzpicture}
	\node[below] at (-2,0) {$H$};
	\node[below] at (0,0) {$(\Z_2\ast \Z_2)^2$};
	\node[below] at (2,0) {$H$};
	\node[above] at (-1,0) {$\Z_2\ast \Z_2$};
	\node[above] at (1,0) {$\Z_2\ast \Z_2$};
	
	\draw[fill] (0,0) circle [radius=0.05];
	\draw[fill] (2,0) circle [radius=0.05];
	\draw[fill] (-2,0) circle [radius=0.05];
	
	\draw (-2,0)--(0,0)--(2,0);
	\end{tikzpicture}
	
	\caption{A splitting of $G_3$.}
	\label{figure: $G_3$ splitting}
\end{figure}

$H$ is virtually a surface group, therefore $X$ consists of spaces quasi-isometric to $\H^2$, which we will call $\tilde{H}$, and Euclidean planes. Applying the same reasoning as in Example \ref{example: s2 torus}, we can conclude that $\bnd X$ is path connected. This example is important for two reasons. First, the way we split the group is vital for applying Theorem \ref{combo}. If we split the group in the `obvious' way:
\[H\ast_{\Z_2\ast \Z_2} H\]
then the virtual $\Z^2$ is not captured in the splitting (i.e. this is not a peripheral splitting). Because of this, it is no longer true that if two rays go through the same sequence of $\tilde{H}$s, then the rays are asymptotic. Second, this is the type of group for which Hruska-Ruane's results on semistability do not apply \cite{HruskaRuaneSemistable}. However, it is known that RACGs  are semistable at infinity \cite{MihalikArtinCoxeterSemistable}.

\end{example}

We will refer to all three of these examples again in Section \ref{section:isolated flats} to describe the isolated flats structure on each group, and again in Section \ref{section block decomp} to describe the block decompositions of each space.
	
\section{The Visual Boundary} \label{section visual boundary}
We refer the reader to \cite{BH99} for an introduction to the theory of $\cat(0)$ spaces and groups. Throughout this section, $X$ is a proper $\cat(0)$ space. 

We describe the topology on $\overline{X}=X\cup \bnd_{x_0} X$ and then restrict this topology to the visual boundary.

\begin{defn}[$\X$ and the cone topology]
	
	Fix a basepoint $x_0\in X$. Let $\bnd_{x_0} X$ be the set of geodesic rays $c\colon [0,\infty)\to X$ based at $x_0$. Let $\overline{B}(x_0,r)$ be the closed ball of radius $r$ and $\pi_r$ be the projection map to this ball. Define sets of the following form:
	
	\[ U(c,r,D):= \{ x\in \X : d(x,x_0)>r, d(\pi_r(x),c(r))<D\}\]
	where $c$ is a geodesic segment or ray based at $x_0$, $r,D>0$. A neighborhood basis for the \textit{cone topology} on $\X$ consists of sets of the form $U(c,r,D)$ along with metric balls in $X$.
	
	The cone topology can be restricted to $\bnd_{x_0} X$ by taking sets of the following form to be a neighborhood basis:
	
	\[ U(c,r,D)= \{ c'\in \bnd_{x_0} X: d(c(r),c'(r))<D\}\]
	with $r,D>0$.
		\end{defn}
	
	Due to the follow result in \cite{BH99}, the topology on $\bnd_{x_0} X$ is independent of choice of basepoint. Therefore we can denote the visual boundary as $\bnd X$ without ambiguity. In practice, however, it often useful to fix a basepoint and work with $\bnd_{x_0} X$.
	
	\begin{prop}[{\cite[II.8]{BH99}}]
		For any choice of $x_0$ and $x_1$, $\bnd_{x_0} X$ and $\bnd_{x_1} X$ are homeomorphic.
	\end{prop}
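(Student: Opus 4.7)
The plan is to define a map $\Phi\colon \bnd_{x_0}X \to \bnd_{x_1}X$ that sends each geodesic ray $c$ based at $x_0$ to the unique geodesic ray $c'$ based at $x_1$ that is asymptotic to $c$ (i.e.\ whose Hausdorff distance from $c$ is finite). This map will be our homeomorphism. The first task is therefore to prove existence and uniqueness of the asymptotic ray. For existence, I would fix $c \in \bnd_{x_0}X$ and consider the geodesic segments $\sigma_n$ from $x_1$ to $c(n)$. Since $X$ is proper and the segments have uniformly bounded starting derivatives (each $\sigma_n$ is 1-Lipschitz), an Arzel\`a--Ascoli argument extracts a subsequential limit ray $c'$ based at $x_1$. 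Convexity of the distance function along pairs of geodesics in a CAT(0) space gives $d(c(t), c'(t)) \le d(x_0,x_1)$ for all $t$, so $c'$ is asymptotic to $c$. For uniqueness, if two rays from $x_1$ are both asymptotic to $c$, convexity of the distance function between two geodesics starting at the same point forces them to coincide (the distance is convex, starts at $0$, and stays bounded, so it is identically $0$).

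Once $\Phi$ is well-defined, continuity is the main technical step. I would show: given a basic neighborhood $U(\Phi(c), r, D)$ of $\Phi(c)$ in $\bnd_{x_1}X$, one can find a neighborhood $U(c, r', D')$ of $c$ in $\bnd_{x_0}X$ whose image lies inside. The key input is again the CAT(0) convexity inequality: for any two rays $c_1, c_2$ based at a common point, $t \mapsto d(c_1(t), c_2(t))$ is a convex function vanishing at $0$, so $d(c_1(s), c_2(s)) \le (s/t)\, d(c_1(t), c_2(t))$ for $s \le t$. Combined with the estimate $d(c(t), \Phi(c)(t)) \le d(x_0, x_1)$ from asymptotic pairs, one chooses $r'$ much larger than $r$ and $D'$ small enough that any ray $\tilde c$ from $x_0$ close to $c$ at scale $r'$ must have its $\Phi$-image close to $\Phi(c)$ at scale $r$. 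This is a routine but careful quadrilateral estimate using the CAT(0) inequality.

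Finally, by symmetry the analogously defined map $\Psi\colon \bnd_{x_1}X \to \bnd_{x_0}X$ is continuous, and uniqueness of asymptotic rays makes $\Phi$ and $\Psi$ mutual inverses (the ray from $x_0$ asymptotic to the ray from $x_1$ asymptotic to $c$ must be $c$ itself). Hence $\Phi$ is a homeomorphism.

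The hard part, and the place where I would spend the most care, is the quantitative continuity estimate: translating between ``close at radius $r'$ from $x_0$'' and ``close at radius $r$ from $x_1$.'' Everything else reduces quickly to convexity of the distance function and properness of $X$ (via Arzel\`a--Ascoli); it is the explicit neighborhood comparison that requires one to unwind the CAT(0) inequality on a concrete quadrilateral with vertices $x_0, x_1, c(t), \tilde c(t)$.
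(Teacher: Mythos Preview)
The paper does not give its own proof of this proposition: it is stated as a citation to \cite[II.8]{BH99} and used without argument. Your outline is the standard Bridson--Haefliger proof, so there is nothing to compare against here; your approach is correct and is precisely the one the cited reference uses.
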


We use a metric on $\bnd X$ which was first introduced by Osajda. We use this metric in Section \ref{section:decomp}.

\begin{defn}[Metric on $\bnd X$]
	Fix $x_0\in X$ and $D>0$. The metric $d_D$ on $\bnd_{x_0} X$ is defined as follows: given two rays $c,c'\in \bnd_{x_0} X$, $d_D(c,c')=\dfrac{1}{r}$ where $r\in [0,\infty)$ such that $d(c(r),c'(r))=D$. Since $d(c(t),c'(t))$ increases monotonically with $t$, there is a unique $r$ satisfying $d(c(r),c'(r))=D$.
	
	This metric is compatible with the cone topology \cite{OsajdaSwic}. Furthermore, the spaces $(\bnd X, d_D)$ and $(\bnd X, d_{D'})$ are isometric, so the metric is independent of choice of $D$.
\end{defn}

\section{Relatively Hyperbolic Group and Peripheral Splittings} \label{rel hyp section}

In this section, we define relatively hyperbolic groups, the Bowditch boundary, and state a theorem of Tran which relates the Bowditch boundary to the $\cat(0)$ boundary. We then introduce the maximal peripheral splitting of a relatively hyperbolic group. We will use the definition of relative hyperbolicity from \cite{Yaman}. This definition uses an action of a group on a compact, metrizable topological space $M$, which will ultimately be the Bowditch boundary.

\begin{defn}[Convergence action]
Let $M$ be a compact metrizable space and $G$ a finitely generated group. An action of $G$ on $M$ is a \textit{convergence action} if it is properly discontinuous on distinct triples of $M$.
\end{defn}

\begin{defn}[Geometrically finite]
A convergence group action of $G$ on $M$ is \textit{geometrically finite} if every point of $M$ is either a bounded parabolic point or a conical limit point. 
\end{defn}

\begin{defn}[Relatively hyperbolic]
	Let the action of $G$ on $M$ be a geometrically finite convergence action and let $\bP$ be a collection of subgroups of $G$. Then $G$ is \textit{hyperbolic relative to $\bP$} if $\bP$ is exactly the collection of maximal parabolic subgroups. 
\end{defn}

\begin{defn}[Bowditch boundary]
	The space $M$ is uniquely determined by the pair $(G,\bP)$ and is the \textit{Bowditch boundary}, denoted $\bnd(G,\bP)$.
\end{defn}

Since we do not deal with the Bowditch boundary directly, we will not define all the terms above. See \cite{Yaman} for precise definitions. We need the Bowditch boundary in order to apply Theorem \ref{Tran Theorem} and to use a handful results of Bowditch which are summarized in Theorem \ref{thm: peripheral splitting}. Both theorems can be found below.

There are many other ways to define a relatively hyperbolic group, see for example \cite{FarbBoundedCoset, GrovesManningCusped}. Each of these ultimately has $G$ acting on a $\delta$-hyperbolic metric space and takes the boundary of this space to be the Bowditch boundary. This is equivalent to our definition of the Bowditch boundary. Furthermore, all of these definitions of relatively hyperbolic are equivalent.

Suppose $G$ is a $\cat(0)$ group acting geometrically on a $\cat(0)$ space $X$ and $(G,\bP)$ is also relatively hyperbolic. We have so far introduced two different boundaries for $G$: the $\cat(0)$ boundary and the Bowditch boundary. Work of Tran tells us how to relate these two boundaries:

\begin{thm}[\cite{Tran13}] \label{Tran Theorem}
	Let $(G,\bP)$ be a relatively hyperbolic group acting geometrically on a $\delta$-hyperbolic or $\cat(0)$ space $X$. The quotient space formed from $\bnd X$ by collapsing the limit sets of each $P\in \bP$ to a point is $G$-equivariantly homeomorphic to the Bowditch boundary $\bnd (G,\bP)$.
\end{thm}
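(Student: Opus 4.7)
My plan is to exploit the uniqueness clause in the definition of the Bowditch boundary: the space $M$ is characterized (up to $G$-equivariant homeomorphism) by being a compact metrizable space on which $G$ acts as a geometrically finite convergence group with maximal parabolic subgroups exactly $\bP$. So rather than building an explicit homeomorphism to $\bnd(G,\bP)$, I would verify that the quotient $Z := \bnd X / \!\sim$ (where $\sim$ collapses each peripheral limit set $\Lambda(gPg^{-1})$, for $g \in G$ and $P \in \bP$, to a point) has exactly these properties, and invoke uniqueness.

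First I would set up the quotient carefully. The subsets to be collapsed are the limit sets $\Lambda(gPg^{-1}) \subset \bnd X$; these are closed, $G$-permuted, and pairwise disjoint (two distinct conjugates of peripherals have only bounded coarse intersection, hence disjoint limit sets in the CAT(0) boundary). One needs to check that the resulting equivalence relation is closed in $\bnd X \times \bnd X$, which guarantees that $Z$ is Hausdorff; this follows from the fact that a sequence $\xi_n \to \xi$ in $\bnd X$ with $\xi_n \in \Lambda(g_n P_n g_n^{-1})$ for infinitely many distinct conjugates must have $\xi$ a non-peripheral conical limit point. Compactness and metrizability of $Z$ are inherited from $\bnd X$, and the $G$-action on $\bnd X$ obviously descends to $Z$.

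Next I would verify the convergence property on $Z$. Given an infinite sequence $g_n \in G$, pass to a subsequence so that $g_n x_0$ converges in $\X$ to some $\xi^+ \in \bnd X$ and $g_n^{-1} x_0$ converges to some $\xi^-$. Standard CAT(0) north-south dynamics, together with the cocompact proper action on $X$, imply that $g_n$ acts on $\bnd X$ with source-sink dynamics concentrated at $\xi^-$ and $\xi^+$: on compact subsets of $\bnd X \setminus \{\xi^-\}$, $g_n$ converges uniformly to the constant $\xi^+$. Projecting to $Z$, the images $[\xi^\pm]$ play the role of the source and sink, which is the convergence property. Distinctness of triples then gives proper discontinuity of the action on triples in $Z$.

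Finally I would check geometric finiteness, which is where I expect the main technical work to lie. Each collapsed point $[\Lambda(gPg^{-1})] \in Z$ needs to be a bounded parabolic point: its stabilizer is exactly the conjugate $gPg^{-1}$ (parabolic), and boundedness amounts to showing that $gPg^{-1}$ acts cocompactly on $Z \setminus \{[\Lambda(gPg^{-1})]\}$. This can be extracted from cocompactness of the $P$-action on $X$ minus a horoball neighborhood of $\Lambda(P)$ together with the quotient construction. For a point $[\xi] \in Z$ not of this form, I would show $\xi \in \bnd X$ itself is conical in the CAT(0) sense, and then promote this to a conical limit point in $Z$: pick $g_n x_0$ converging to $\xi$ along a quasi-geodesic in $X$, and use the projection $q: \bnd X \to Z$ together with the source-sink analysis above to check that $g_n^{-1}[\xi]$ stays in a compact set of $Z$ away from the sink. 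The subtlety is that $\xi$ being a limit of a $G$-orbit in the CAT(0) sense does not automatically avoid peripheral limit sets after applying $g_n^{-1}$; one must choose the approach path to stay a bounded distance from $Gx_0$ outside peripheral horoballs, which uses the isolated-flats-like coarse geometry of the relatively hyperbolic structure. With convergence, geometric finiteness, and $\bP$ as the maximal parabolics verified, uniqueness of the Bowditch boundary yields a $G$-equivariant homeomorphism $Z \cong \bnd(G,\bP)$.
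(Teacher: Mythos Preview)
The paper itself does not prove this theorem; it is quoted from \cite{Tran13} and used as a black box. So there is no ``paper's own proof'' to compare against. Your overall strategy --- verify that the quotient $Z$ carries a geometrically finite convergence action with the right parabolics, then invoke the uniqueness of the Bowditch boundary (Yaman's characterization) --- is in fact the strategy Tran uses.

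That said, there is a real gap in your convergence-group step. You assert that for a sequence $g_n \in G$ with $g_n x_0 \to \xi^+$ and $g_n^{-1} x_0 \to \xi^-$, ``standard CAT(0) north--south dynamics'' gives uniform convergence of $g_n$ to the constant map $\xi^+$ on compact subsets of $\bnd X \setminus \{\xi^-\}$. This is false in general: the $G$-action on a CAT(0) visual boundary is \emph{not} a convergence action when $X$ contains flats. For the simplest example, take $G = \Z^2$ acting on $\E^2$; the action on $\bnd X = S^1$ is trivial. More generally, any sequence $g_n$ lying in a single peripheral $P$ fixes $\Lambda(P)$ pointwise, and no single repelling point in $\bnd X$ can account for this.

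The fix is exactly what the collapsing is designed to do, but the argument must take place on $Z$, not on $\bnd X$. One distinguishes two cases: if (a subsequence of) $g_n$ lies in a single peripheral coset $gP$, then all of $\Lambda(gPg^{-1})$ is both attracting and repelling on $\bnd X$, and in $Z$ this collapses to a single point that serves as both source and sink; if $g_n$ escapes every peripheral coset, then one uses the relatively hyperbolic structure (e.g.\ acylindricity on the coned-off graph, or the cusped space) to obtain genuine north--south dynamics on $Z$. This dichotomy is where the relatively hyperbolic hypothesis enters, and it is not a consequence of CAT(0) geometry alone. Your Hausdorffness and bounded-parabolic discussions are reasonable sketches, but the conical-limit-point verification also needs this corrected convergence argument as input.
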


We will now introduce the maximal peripheral splitting of a relatively hyperbolic group. This plays a vital role in establishing the block decomposition of the relatively hyperbolic groups we study.

\begin{defn}[Peripheral Splitting]
	Let $(G,\bP)$ be a relatively hyperbolic group. A \textit{peripheral splitting} of $(G,\bP)$ is a splitting of $G$ as a finite bipartite graph of groups $\G$ whose vertices have two type: peripheral and component. Furthermore, the collection of subgroups of $G$ which are conjugate to a peripheral vertex group is the same as $\bP$, and $\G$ does not contain a vertex of degree 1 which is contained in the adjacent peripheral vertex group.
\end{defn}

A splitting of this form is called \textit{trivial} if one of the vertex groups is equal to $G$ and \textit{nontrivial} otherwise. A splitting $\mathcal{H}$ is a \textit{refinement} of another splitting $\G$ if $\G$ can be obtained from $\mathcal{H}$ by a sequence of foldings over edges which preserves the peripheral/component coloring. A peripheral splitting is \textit{maximal} if it is not a refinement of any other splitting.

We combine a number of results of Bowditch about peripheral splittings into one large theorem:
\begin{thm}[Peripheral Splittings Results] \label{thm: peripheral splitting}
	Let $(G,\bP)$ be a 1-ended relatively hyperbolic group. Then the following hold:
	
	\begin{enumerate}
		\item if each $P\in \bP$ is finitely presented, 1- or 2-ended, and does not contain an infinite torsion subgroup, then $\bnd (G,\bP)$ is connected \cite[10.1]{Bow12} and locally connected \cite[1.5]{Bow01},
		
		\item if $G_v$ is a component vertex group of a peripheral splitting, then $G_v$ is hyperbolic relative to $\bP_v:= \{ P\cap G_v: P\in \bP\}$ \cite[1.3]{Bow01},
		
	\item if $G_v$ is a component vertex of a maximal peripheral splitting and both $\bP$ and $\bP_v$ consist of finitely generated, 1- or 2-ended groups with no infinite torsion subgroups, the $\bnd (G_v,\bP_v)$ is connected and locally connected \cite[1.3 and 1.5]{Bow01}.

		
		
		\item if $G_v$ is a component vertex of a maximal peripheral splitting and both $\bP$ and $\bP_v$ consist of finitely generated, 1- or 2-ended groups with no infinite torsion subgroups, then $\bnd (G_v,\bP_v)$ contains no global cut points \cite[1.4]{Bow01}.
	\end{enumerate}
\end{thm}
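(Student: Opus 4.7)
The plan is to handle the four claims by reducing parts (3) and (4) to parts (1) and (2), so that the real content lies in (1), (2), and the no-global-cut-points portion of (4).

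For (1), connectedness and local connectedness are proven separately. Connectedness should follow from a Bestvina--Mess style argument in a hyperbolic model for $(G,\bP)$: in the Groves--Manning cusped space $X$, $G$ acts geometrically on a proper hyperbolic space with $\bnd X \cong \bnd(G,\bP)$, and one-endedness of $G$ combined with the peripheral hypotheses yields connectedness of the Gromov boundary. Local connectedness is much deeper. The plan is to argue by contradiction: if local connectedness fails at some $p \in \bnd(G,\bP)$, then near $p$ one finds either a cut point or a ``bad'' cut pair, and by Bowditch's structural theory these produce a nontrivial splitting of $G$ over a virtually cyclic or parabolic subgroup compatible with $\bP$. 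Iterating, one builds a strictly refining sequence of splittings, contradicting accessibility (which is where finite presentation and the absence of infinite torsion subgroups in the parabolics is essential, via Dunwoody's accessibility results applied relatively).

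For (2), the plan is to verify the convergence-action definition directly. Given the convergence action of $G$ on $\bnd(G,\bP)$, one restricts attention to the limit set $\Lambda G_v$ of a component vertex group $G_v$. The induced action of $G_v$ on $\Lambda G_v$ is a convergence action because convergence is inherited by subgroup actions on closed invariant subspaces. Geometric finiteness is then checked pointwise: a conical limit point for $G$ that lies in $\Lambda G_v$ is conical for $G_v$, and a bounded parabolic point in $\Lambda G_v$ is stabilized by a subgroup of the form $P \cap G_v$ for $P \in \bP$, which gives exactly $\bP_v$. The technical point is ensuring that each $P \cap G_v$ is still bounded-parabolic, which uses the bipartite graph-of-groups structure of the peripheral splitting.

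For (3), apply (2) to get that $(G_v,\bP_v)$ is relatively hyperbolic, and then apply (1) to conclude connectedness and local connectedness of $\bnd(G_v,\bP_v)$; the hypotheses on $\bP_v$ are assumed directly. For (4), the plan is to argue by contradiction using maximality: a global cut point $p$ in $\bnd(G_v,\bP_v)$ would, via Bowditch's characterization, produce a nontrivial peripheral splitting of $(G_v,\bP_v)$ over a parabolic subgroup; this splitting could then be grafted onto the original peripheral splitting of $(G,\bP)$ at the vertex $v$, yielding a strict refinement of the maximal splitting and a contradiction.

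The main obstacle is without doubt the local connectedness statement in (1). Connectedness and the reductions in (2)--(4) are relatively formal once the framework is in place, but local connectedness requires the full Bowditch machinery: characterizing cut points and cut pairs of the Bowditch boundary, converting these into group-theoretic splittings, and deploying relative accessibility to terminate the refinement process. Everything downstream, including the no-cut-points assertion in (4), rests on this ability to convert cut-point phenomena in the boundary into combinatorial splittings of the group.
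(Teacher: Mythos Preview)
The paper does not prove this theorem at all: it is presented purely as a compilation of results from Bowditch's papers, with each item carrying a citation and no argument given. So there is nothing to compare your proof to on the paper's side; what you have written is a sketch of how Bowditch's arguments actually go, which is more than the paper attempts.

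That said, there is a genuine gap in your reduction for part (3). You propose to deduce (3) by applying (2) to get that $(G_v,\bP_v)$ is relatively hyperbolic and then applying (1) to this new pair. But (1) has the standing hypothesis that the ambient group is 1-ended, and the component vertex groups $G_v$ of a maximal peripheral splitting need not be 1-ended; indeed the paper itself remarks later that ``in general, each component vertex is not 1-ended'' (this is exactly why the subspaces $Y_v$ are introduced in Section~\ref{section:fixing a space}). So you cannot invoke (1) for $(G_v,\bP_v)$ directly. In Bowditch's actual argument, connectedness of $\bnd(G_v,\bP_v)$ is not obtained from one-endedness of $G_v$ but rather is inherited from connectedness of $\bnd(G,\bP)$: the limit set $\Lambda G_v$ sits inside the connected space $\bnd(G,\bP)$ in a controlled way dictated by the peripheral splitting, and Bowditch shows this forces $\Lambda G_v \cong \bnd(G_v,\bP_v)$ to be connected. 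Once connectedness is in hand, local connectedness follows from the same cut-point/accessibility machinery you describe for (1), now applied with the hypothesis ``connected boundary'' in place of ``1-ended group''. Your outline for (1), (2), and (4) is otherwise a fair summary of Bowditch's strategy.
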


In the setting we are dealing with ($\cat(0)$ groups with isolated flats) each component vertex is virtually $\Z^n$ for $n\geq 2$, and so 1-ended with no infinite torsion subgroups. Furthermore, if $G_v$ is a component vertex, $\bP_v$ consists of 1- or 2-ended groups.  We use the first point of this theorem in order to accurately restate a result of Hruska-Ruane in Section \ref{section:decomp}. The second point allows us to make sense of the last two. The third point is used at the end of Section \ref{section:decomp} and the final point is used in a proposition stated in Section \ref{section:local conn at flats}.

\section{Isolated Flats} \label{section:isolated flats}

Here we define $\cat(0)$ isolated flats and outline some results about their large scale geometry. 

\begin{defn}[Flat]
	A subset $F\subset X$ is a \textit{flat} if it is isometric to $\E^n$ for some $n\geq 2$.
\end{defn}

There are many equivalent definitions of isolated flats. Hruska and Kleiner prove the equivalence of these definitions in \cite{HK05}. We will take the following as our definition:

\begin{defn}[$\cat(0)$ space with isolated flats]
	Let $X$ be a $\cat(0)$ space admitting a geometric group action by $G$. Let $\F$ be a $G$-invariant collection of flats. Then $X$ has \textit{isolated flats} if the following two properties hold:
	
	\begin{enumerate}
		\item Maximality: There is  $D<\infty$ such that for any flat $F\subset X$, there is a flat $F'\in \F$ with $F\subset N_D(F')$. 
		\item Isolated: For any $r>0$, there is a $\rho=\rho(r)>0$ such that for any distinct $F,F'\in \F$,	
		$$\text{diam}(N_r(F)\cap N_r(F'))<\rho$$		
	\end{enumerate}
\end{defn}

We say a group $G$ is \textit{$\cat(0)$ with isolated flats} if it acts geometrically on a $\cat(0)$ space with isolated flats.

\begin{example}~

	\begin{enumerate}
		\item Return to Example \ref{example: s2 torus}. The universal cover of the surface group amalgam in Figure \ref{figure: s2 torus} is a $\cat(0)$ space with isolated flats. The fundamental group acts geometrically on the universal cover. The flats, coming from the universal cover of the torus, are isolated in the sense that there is a hyperbolic plane separating them.
		
		\item Consider Example \ref{example: punctured tori}. The universal cover of the two tori with boundary components with identifications given in Figure \ref{figure:tori with boundary} is a $\cat(0)$ space with isolated flats. The $\bR^2$'s coming from the Klein bottle are separated by truncated $\H^2$'s.
		
		\item From Example \ref{example: RACG}, the Davis complex of this RACG is $\cat(0)$ with isolated flats admitting a geometric group action by the RACG. Here, the flat comes from the square in the defining graph. As with the example above, pairs of flats are separated by (something quasi-isometric to) hyperbolic planes.
		
		\item Let $K$ be the figure 8-knot and $M=S^3\setminus K$ and let $X$ be the universal cover of $M$. Then $X$ is isometric to truncated $\H^3$ and is a $\cat(0)$ space with isolated flats, with the lifts of the torus-boundary of the manifold making up the flats.
	\end{enumerate}
\end{example}

One of the major theorems from Hruska and Kleiner's work is that there are a number of equivalent characterizations of $\cat(0)$ spaces with isolated flats. 

\begin{thm}[{\cite[1.2.1]{HK05}}]
	Let $X$ be a $\cat(0)$ space admitting a geometric group action by $G$. Then the following are equivalent
	
	\begin{enumerate}
		\item $X$ has isolated flats.
		\item Each component of the Tits boundary $\partial_T X$ is either an isolated point or a standard Euclidean sphere.
		\item $X$ is relatively hyperbolic with respect to a family of flats $\F$.
		\item $G$ is relatively hyperbolic with respect to a collection of virtually abelian subgroups of rank at least 2.
	\end{enumerate}
\end{thm}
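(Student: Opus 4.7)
The plan is to prove the equivalences as the cycle $(1) \Rightarrow (2) \Rightarrow (3) \Rightarrow (4) \Rightarrow (1)$, since each arrow is the most natural translation between conditions phrased in the interior of $X$, its asymptotic Tits structure, metric relative hyperbolicity, and group-theoretic relative hyperbolicity. The key geometric inputs are the Flat Strip Theorem, the Flat Torus Theorem, and the characterization of metric relative hyperbolicity of a space via Gromov hyperbolicity of an associated coned-off space. For $(1) \Rightarrow (2)$, each $F \in \F$ contributes a Euclidean sphere $\partial_T F$ to $\partial_T X$, and I would argue this is a full Tits component via the flat quadrilateral construction: two geodesic rays at finite Tits distance span a flat half-strip, which extends to a flat, which maximality then forces into a bounded neighborhood of some $F \in \F$, and quantitative isolation then implies both endpoints lie in $\partial_T F$. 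Rays never asymptotic to any flat in $\F$ must therefore produce isolated Tits points.

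For $(2) \Rightarrow (3)$, I would cone off each flat in $\F$ to finite diameter in $X$ and prove the resulting space is Gromov hyperbolic by showing that large geodesic triangles either fit inside a bounded neighborhood of a flat (handled by the coning) or would force the Tits boundary to contain components that are neither singletons nor Euclidean spheres, contradicting (2). For $(3) \Rightarrow (4)$, a Milnor--Svarc style translation converts metric relative hyperbolicity of $X$ into group-theoretic relative hyperbolicity of $G$ with respect to the stabilizers $\{\mathrm{Stab}_G(F) : F \in \F\}$, each of which acts cocompactly on $\E^n$ and is therefore virtually abelian of rank at least two. For $(4) \Rightarrow (1)$, the Flat Torus Theorem produces a flat for each virtually abelian peripheral subgroup; taking the $G$-orbit gives $\F$, with maximality following because every flat in $X$ has virtually abelian stabilizer of rank $\geq 2$ that must conjugate into a peripheral, and isolation following from the Bounded Coset Penetration property of relatively hyperbolic groups.

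The main obstacle is the pair $(1) \Leftrightarrow (2)$, especially the direction $(1) \Rightarrow (2)$. The subtle step is ruling out pathologies in which a ray $c$ that is never asymptotic to any flat in $\F$ nevertheless sits at finite Tits distance from some other ray $c'$ --- for instance via rays hugging an increasing sequence of flats that never stabilize. Showing this cannot happen requires using both the maximality clause and the quantitative isolation clause simultaneously, rather than either one alone, and is the geometric heart of the theorem. A secondary technical point is the careful verification of Bounded Coset Penetration in the step $(4) \Rightarrow (1)$, which I would sidestep by instead deducing isolation directly from the convergence-action definition applied to the family of flats.
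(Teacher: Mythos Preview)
The paper does not prove this theorem at all: it is quoted verbatim from Hruska--Kleiner \cite{HK05} as background, with no proof or sketch given. There is therefore nothing in the paper to compare your proposal against; the author simply uses the equivalence as a black box (primarily to know that a $\CAT(0)$ group with isolated flats is relatively hyperbolic with virtually abelian peripherals, so that the peripheral-splitting machinery of Section~\ref{rel hyp section} applies).

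As for your sketch on its own merits: the cycle you propose is the natural one, and your identification of $(1)\Leftrightarrow(2)$ as the geometric heart is correct. But your outline for $(2)\Rightarrow(3)$ is too vague to be a plan --- saying that a non-thin triangle ``would force the Tits boundary to contain components that are neither singletons nor Euclidean spheres'' is exactly the hard implication, not an argument for it. In Hruska--Kleiner the actual route is different and substantially more involved: they pass through an intermediate ``relatively thin triangles'' condition and a careful asymptotic-cone argument, rather than a direct coning-off-and-contradiction as you suggest. Similarly, your $(4)\Rightarrow(1)$ step asserts maximality because ``every flat has virtually abelian stabilizer of rank $\geq 2$ that must conjugate into a peripheral,'' but a priori a flat need not have cocompact stabilizer, so this step needs the full strength of the isolated-flats structure you are trying to establish. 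These are genuine gaps in the plan, not just details to fill in.
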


One key fact we use to establish that $\cat(0)$ groups with isolated flats satisfy condition (3) in Theorem \ref{combo} is a quasiconvexity result from Hruska-Ruane. Recall the definition of quasiconvex:

\begin{defn}[Quasiconvex]
	Let $X$ be a geodesic metric space. A subset $C$ is \textit{$\kappa$-quasiconvex} if for any pair of points $x,y\in C$, any geodesic between $x$ and $y$ is in $N_\kappa(C)$. 
	
	We say a subset is \textit{quasiconvex} if it is $\kappa$-quasiconvex for some $\kappa$. 
\end{defn}

\begin{thm} [{\cite[5.6]{HR17}}] \label{Quasi-convex Constant}
	
	Let $X$ be a $\cat(0)$ space with isolated flats with respect to $\F$. There is a constant $\kappa>0$ such that the following holds:
	\begin{enumerate}
		\item Given two flats $F_1,F_2\in \F$ with $c$ the shortest length geodesic from $F_1$ to $F_2$, then $F_1\cup F_2\cup c$ is $\kappa$-quasiconvex in $X$.
		\item Given a point $p$ and a flat $F\in \F$ with $c$ the shortest path from $p$ to $F$, then $c\cup F$ is $\kappa$-quasiconvex in $X$. In particular, if $c'$ is any geodesic joining a point of $c$ to a point of $F$, then $c'$ intersects the $\kappa$-neighborhood of the endpoint in the flat of $c$.
	\end{enumerate}
\end{thm}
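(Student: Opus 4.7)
The plan is to combine the bounded-penetration form of the isolated flats condition with the $\cat(0)$ convexity of distance functions to closed convex subsets. Since flats are closed and convex, the nearest-point projection $\pi_F\colon X \to F$ onto any flat $F \in \F$ is well-defined and $1$-Lipschitz. A first reduction is to show that for any two distinct $F, F' \in \F$, the image $\pi_F(F')$ has uniformly bounded diameter $K_0$. This follows from isolation combined with the convexity of the distance-to-$F$ function: if $p, q \in F'$ had projections $u, v \in F$ with $d(u,v)$ very large, then the geodesic $[p,q] \subset F'$ would lie in $N_r(F)$ for $r = \max(d(p,F), d(q,F))$, so that $N_r(F) \cap N_r(F')$ would contain a long segment, contradicting the isolation estimate $\text{diam}(N_r(F)\cap N_r(F')) < \rho(r)$ together with cocompactness controlling the relevant values of $r$. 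Moreover, $\pi_F(F')$ lies within $K_0$ of the foot $c(0)$ of the shortest geodesic $c$ from $F$ to $F'$.

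The second technical ingredient is a projection lemma for geodesics. There is a constant $K_1 = K_1(r)$ such that for any geodesic $\gamma$ in $X$ with endpoints $x, y$ and any $F \in \F$, if $\gamma$ enters $N_r(F)$ at a point $u$ and exits at $v$, then $d(u, \pi_F(x)) \leq K_1$ and $d(v, \pi_F(y)) \leq K_1$. The proof is a shortcut argument: if $u$ were far from $\pi_F(x)$, cutting $\gamma$ at $u$, projecting to $F$, and gluing back would yield a strictly shorter path from $x$ to $y$, contradicting the geodesic property. The quantitative bound uses the first reduction applied to whatever other flats of $\F$ the shortcut passes close to.

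With these two tools in hand, part (1) is direct. Take $x, y \in F_1 \cup c \cup F_2$. If $x$ and $y$ lie in the same piece, convexity of that piece places $[x,y]$ entirely within it. The essential case is $x \in F_1$, $y \in F_2$; the mixed cases (one endpoint on $c$) reduce to this via projection onto the flat. In the essential case, $[x,y]$ exits a fixed neighborhood $N_r(F_1)$ at some point $v_1$, and by the projection lemma $v_1$ lies within $K_1$ of $\pi_{F_1}(y)$; since $y \in F_2$, the first reduction places $\pi_{F_1}(y)$ within $K_0$ of $c(0)$. Symmetrically, $[x,y]$ reenters $N_r(F_2)$ at a point $v_2$ near $c(\ell)$. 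The middle arc of $[x, y]$ connects $v_1$ to $v_2$, and by the $\cat(0)$ comparison applied to the quadrilateral with vertices $v_1, c(0), c(\ell), v_2$, using minimality of $c$, this arc stays within a uniform constant of $c$. Taking $\kappa$ to be the maximum of the constants arising gives the claimed quasiconvexity. Part (2) is the specialization of the same argument to the degenerate case where $F_1 = \{p\}$, and the ``in particular'' clause is just the projection lemma applied to $c'$.

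The main obstacle is controlling the middle arc against detours into other flats: a priori, $[x,y]$ could enter some third flat $F_3 \in \F$ and linger there arbitrarily long before emerging. Ruling this out is exactly the content of the isolated flats condition. Each such detour must, by the projection lemma, enter $F_3$ near $\pi_{F_3}(x)$ and exit near $\pi_{F_3}(y)$; if the lingering time were unbounded over a family of such detours, one could extract a pair of flats with arbitrarily large coarse intersection, contradicting isolation. The delicate calibration step is to produce a single $\kappa$ working uniformly over all triples $(F_1, F_2, p)$ and all choices of geodesics.
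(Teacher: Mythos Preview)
The paper does not prove this theorem at all: it is quoted verbatim from Hruska--Ruane \cite[5.6]{HR17} and used as a black box throughout (in Lemma~\ref{Haulmark F-q quasiconvex}, Lemma~\ref{IFP blocks are lonely}, and the estimates of Section~\ref{section:proper and cocompact}). There is therefore nothing in the present paper to compare your argument against.

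That said, since you have attempted a proof, a brief comment on the sketch. The overall architecture---bounded projections between flats, a projection lemma for geodesics entering and exiting neighborhoods of flats, and then a three-piece decomposition of $[x,y]$---is indeed the standard route to results of this type. However, several steps are not yet arguments. In your first reduction, you write that if $d(u,v)$ is large then $[p,q]\subset F'$ lies in $N_r(F)$ for $r=\max(d(p,F),d(q,F))$; this is true by convexity of $d(\cdot,F)$, but the isolation bound $\rho(r)$ depends on $r$, and $r$ here can be arbitrarily large since $p,q$ range over all of $F'$. The phrase ``cocompactness controlling the relevant values of $r$'' is doing real work you have not done: one must first argue that near the projections $u,v$ there are points of $F'$ at a \emph{uniformly bounded} distance from $F$, and only then invoke isolation. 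Similarly, the ``shortcut argument'' for the projection lemma is gestured at but not carried out, and the closing paragraph on third-flat detours is an acknowledgement of difficulty rather than a resolution of it. If you want a genuine proof, consult \cite{HR17} directly or the earlier thesis work of Hruska on isolated flats, where these projection estimates are established carefully.
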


\section{Block Decompositions} \label{section block decomp}

This section is devoted to defining a block decomposition of a space, providing some motivating examples, and concludes with the proof of Theorem \ref{combo}. A block decomposition was first introduced in \cite{Moo10} as a generalization of the results from \cite{CK00}. In both papers, the authors focus on spaces which admit geometric group actions by groups of the form $G=A\ast_C B$. The spaces on which these groups act can be viewed as the union of closed, convex pieces, called blocks, and the intersection of these blocks follows the subgroup intersection described in the Bass-Serre tree for the splitting. While the definition of a block decomposition is independent of a group action, this paper's motivating examples of such a decomposition come from the splitting of a group.

We will use the following definition from \cite{Moo10}:

\begin{defn} \label{block decomposition}
	A \textit{block decomposition} of a CAT(0) space $X$ is a collection $\B$ of closed, convex subsets (called blocks) such that the following conditions hold:
	
	\begin{enumerate}
		\item 	$X = \bigcup \limits_{B\in \B} B$,
		\item each block intersects at least two other blocks,
		\item Parity condition: every block has a $(+)$ or $(-)$ parity such that two blocks intersect only if they have opposite parity,
		\item $\epsilon$-condition: there is an $\epsilon>0$ such that two blocks intersect iff their $\epsilon$-neighborhoods intersect.
	\end{enumerate}
	
\end{defn}

We will mostly use the facts that each $B$ is closed and convex and that $X=\bigcup\limits_{B\in \B} B$. Conditions (2)-(4) are there to prevent any degenerate cases (e.g. splitting $X$ into only two pieces, decomposing a plane as a union of lines). 

\begin{defn}
	A \textit{wall} is a non-trivial intersection of blocks and we denote the collection of walls $\W$. 
\end{defn}

Since each block is convex, note that each wall is convex as well. The nerve, denoted $\Nerve(\B)$, is a graph which records block intersections. There is a vertex for each block and an edge if two blocks (non-trivially) intersect. We will use the following notation: If $v$  and $w$ are vertices of $\Nerve(\B)$, then $B_v, B_w\in \B$ are the blocks corresponding to $v$ and $w$, respectively. Given an edge $e$ with vertices $v$ and $w$, then $B_v$ and $B_w$ intersect in a wall by definition and we denote this wall $W_e$.

Following results from \cite{CK00} and \cite{Moo10}, we get that the nerve is always a tree.
\begin{lem}
	$\Nerve(\B)$ is a tree.
\end{lem}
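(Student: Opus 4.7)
The plan is to verify the two properties of a tree---connectedness and acyclicity---separately, relying on the convexity of each block together with the parity and $\epsilon$-conditions.

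For connectedness, I would proceed by a geodesic-tracking argument. Given two blocks $B, B' \in \B$, pick $x \in B$ and $x' \in B'$ and take the $\cat(0)$ geodesic $\gamma$ from $x$ to $x'$. Since the image of $\gamma$ is compact and $X = \bigcup_{C \in \B} C$, finitely many blocks $C_0 = B, C_1, \ldots, C_k = B'$, ordered by the first time $\gamma$ enters them, cover $\gamma$. Because each $C_i$ is closed and convex and $\gamma$ leaves $C_i$ at some final time $t_i$, the $\epsilon$-condition forces $\gamma(t_i)$ to lie already in $C_{i+1}$: otherwise a short interval of $\gamma$ past $t_i$ would avoid every block, contradicting the cover. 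Thus consecutive blocks along $\gamma$ meet non-trivially, yielding an edge-path in $\Nerve(\B)$ from $B$ to $B'$.

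For acyclicity, suppose $\Nerve(\B)$ contains a cycle and fix one of minimal length. By the parity condition this length is $2n \geq 4$; label the blocks $B_0, B_1, \ldots, B_{2n-1}$ around the cycle. Minimality and parity together force $B_i \cap B_j = \emptyset$ for all non-consecutive indices modulo $2n$; in particular, any point of $X$ lies in at most two of the $B_i$, so the walls $W_i = B_i \cap B_{i+1}$ are pairwise disjoint. Pick $p_i \in W_i$, and by convexity of the $B_i$ the segments $[p_{i-1}, p_i] \subset B_i$ concatenate to a closed loop $\ell \subset \bigcup_i B_i$. Now take two antipodal walls $W_0$ and $W_n$ and consider the $\cat(0)$ geodesic $\gamma$ from $p_0$ to $p_n$ in $X$; running the connectedness argument on $\gamma$ produces an edge-path in $\Nerve(\B)$ between a block containing $p_0$ and a block containing $p_n$. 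The remaining step is to show this path provides a strictly shorter cycle in $\Nerve(\B)$, contradicting minimality.

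The main obstacle is this last step: turning the geodesic between antipodal walls into a genuine combinatorial shortcut in the nerve. A priori, $\gamma$ could meander through the $B_i$ themselves and merely retrace one arc of the original cycle. To rule this out, I would use the $\epsilon$-condition, which gives a positive separation $2\epsilon$ between non-adjacent blocks of the cycle, together with the $\cat(0)$ projection onto one of the $B_i$ (which is $1$-Lipschitz and sends convex sets to convex sets) to reduce the argument to a single convex block, following the strategy of \cite{CK00} and \cite{Moo10}. With the shortcut in hand, concatenating it with one arc of the original cycle produces a closed edge-loop in $\Nerve(\B)$ of length strictly less than $2n$, the desired contradiction.
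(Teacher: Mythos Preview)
The paper does not actually prove this lemma: it simply defers to \cite{CK00} and \cite{Moo10} and moves on. So there is no argument in the paper to compare your proposal against, and you have already written more than the author does.

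Judged on its own, your connectedness argument is essentially sound (the finiteness of the blocks meeting $\gamma$ is not just compactness---it needs the $\epsilon$-condition and parity to see that same-parity blocks are pairwise $2\epsilon$-separated---but that is routine). The acyclicity argument, however, has a genuine gap, and it is exactly the one you flag. Running your connectedness argument along the geodesic $\gamma$ from $p_0\in W_0$ to $p_n\in W_n$ produces \emph{some} edge-path in $\Nerve(\B)$, but nothing forces its length to be less than $n$. Two failure modes are not excluded: (i) since each $B_i$ is convex and consecutive $B_i$ overlap, $\gamma$ may sit entirely inside $B_1\cup\cdots\cup B_n$, in which case the resulting edge-path is precisely one arc of the original cycle and has length $n$; (ii) $\gamma$ may leave the cycle blocks and pass through arbitrarily many other members of $\B$, producing a longer edge-path. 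Neither the $2\epsilon$-separation of non-adjacent cycle blocks nor the $1$-Lipschitz projection onto a single convex $B_i$ rules out either scenario, so the ``shortcut'' strategy does not close. If you want a self-contained argument, a cleaner route is: parity forces the nerve to be a $1$-dimensional simplicial complex (three pairwise-intersecting blocks would include two of the same parity), and then a nerve-lemma style argument---using that $X$ is contractible and that all blocks and their pairwise intersections are convex, hence contractible---shows $\Nerve(\B)$ is simply connected, hence a tree. Alternatively, follow \cite{Moo10} directly rather than trying to reinvent the step.
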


Since $\Nerve(\B)$ is a tree, we will denote it $\T_\B$ or $\T$ when $\B$ is understood. Here are some examples of block decompositions:

\begin{figure}[h]
	\includegraphics[width=.4\textwidth]{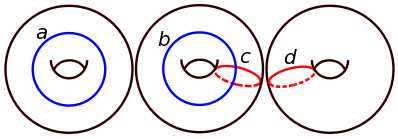}
	\centering
	\caption{Three tori with the curve $a$ identified with $b$ and the curve $c$ identified with $d$.}
	\label{fig:torus complex}
\end{figure}

\begin{example} \label{example: Croke Kleiner}

	In \cite{CK00}, the authors describe what they call a torus complex, see Figure \ref{fig:torus complex}. Let $G$ be the fundamental group of the torus complex and $X$ the universal cover. The space $X$ is $\cat(0)$ and admits a block decomposition coming from a the following splitting of $G$:
	$$G = \left( F_2\times \Z \right) \ast_{\Z^2} \left( F_2\times \Z \right)$$
	The fundamental group of two tori with a pair of curves identified is $F_2\times \Z$ which acts geometrically on $T_4\times \bR$, where $T_4$ is the valence 4 tree. The blocks come from the lifts of the middle torus and either the left or right torus, meaning each $B\in \B$ is isometric to $T_4\times \bR$. The walls are the lifts of the middle torus and are all isometric to $\bR^2$. 
	
	 Croke and Kleiner \cite{CK00} use $\B$ to show that by changing the angle of intersection between the curves $b$ and $c$ on the middle torus, $G$ can act geometrically on spaces $X_1$ and $X_2$ which have non-homeomorphic boundaries. This answered a question of Gromov. Later, in \cite{CK02}, Croke and Kleiner vastly extend their findings on this specific example to a class of groups they call `admissible.'
\end{example}

\begin{example} We return to Example \ref{example: s2 torus}. The universal cover of the surface amalgam from this example is $\cat(0)$ and has a block decomposition coming from the splitting of the fundamental group. Here, if $G_1$ is the fundamental group, then $G_1$ splits as
	$$G_1 = \pi_1(S_2)\ast_\Z \Z^2$$
	where $S_2$ is the surface with genus 2. The blocks of this decomposition come from the lifts of the two surfaces. Therefore there are two types of blocks: those isometric to $\H^2$ and those isometric to $\bR^2$. The walls, however, are all isometric to $\bR$. Unlike the torus complex, this group is $\cat(0)$ with isolated flats. 
\end{example}

\begin{example}
	Consider Example \ref{example: punctured tori} once again. Let $G_2$ be the fundamental group of this surface amalgam and $X_2$ its universal cover. Recall that this group admits the maximal peripheral splitting as seen in Figure \ref{fig: max periph splitting G2}. $X_2$ admits a block structure coming from this splitting. The vertex groups which are free act geometrically on truncated hyperbolic space and the vertex group which is the Klein bottle group acts geometrically on a Euclidean plane. These truncated hyperbolic planes and Euclidean planes in $X_2$ make up the block structure. That is, each block is isometric to either a Euclidean plane or a truncated hyperbolic plane.
\end{example}

\begin{example}
	Return now to the RACG in Example \ref{example: RACG}. Let $G_3$ be the fundamental group and $X_3$ the universal cover of its Davis complex. The block structure on $X_3$ once again comes from a splitting of $G_3$ which we saw in Figure \ref{figure: $G_3$ splitting}. The blocks of $X_3$ come from the subspaces on which the vertex groups act geometrically. The hexagon subgroup $H$ acts geometrically on a space quasi-isometric to $\H^2$ while $(\Z_2\ast \Z_2)^2$ acts geometrically on $\bR^2$. These two types of spaces form the blocks of $X_3$. 
\end{example}

\begin{example}
	More generally, if $X$ is a $\cat(0)$ space coming from the construction in the Equivariant Gluing Theorem (\cite{BH99} Theorem II.11.18), then $X$ admits a block decomposition coming from the splitting of the group $G=G_1\ast_H G_2$. There is a natural map $p\colon  X\to \T$, where $\T$ is the Bass-Serre tree for the splitting. If $v$ is a vertex of $\T$ then the blocks are each of the form  $p\inverse(N_{1/2}(v))$. We will be using this in Section \ref{section:fixing a space}.
\end{example}

With these examples in mind, we can say something about the way block boundaries intersect. We say a wall $W_e\in \W$ \textit{separates} blocks $B_v$ and $B_u$ if any path from $B_v$ to $B_u$ must pass through $W_e$. This is equivalent to $e$ being an edge on any path from $v$ to $u$ in $\T$. 

\begin{lem} \label{Block int}
	For two blocks $B_u,B_v\in \B$ corresponding to vertices $u,v\in \T$, then one of the following holds:
	
	\begin{enumerate}
		\item $\bnd B_u \cap \bnd B_v = \emptyset$
		\item The vertices $u$ and $v$ are adjacent and $\bnd B_u \cap \bnd B_v = \bnd W$ where $W=B_u \cap B_v$.
		\item The vertices $u$ and $v$ are not adjacent in $\T$ but $\bnd B_u \cap \bnd B_v$ is non-empty. Then $\bnd B_u \cap \bnd B_v \subset \bnd W$ where $W$ is \textit{any} wall between $B_u$ and $B_v$.
	\end{enumerate}
\end{lem}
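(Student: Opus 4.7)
My approach is to base the whole lemma on the following general CAT(0) observation, which I would prove first: for a closed convex $C \subseteq X$ and a boundary point $\xi$, $\xi \in \bnd C$ if and only if for some (equivalently every) $x \in X$ the geodesic ray $c_x$ from $x$ to $\xi$ stays at bounded distance from $C$. One direction uses convexity of the distance function between two asymptotic CAT(0) geodesics: if $c_C$ is any ray in $C$ converging to $\xi$, then $t \mapsto d(c_x(t), c_C(t))$ is convex on $[0,\infty)$ and bounded as $t \to \infty$, hence bounded throughout. The converse is the standard projection argument: projecting $c_x(t)$ onto $C$ yields a sequence of points in $C$ whose distance to any basepoint of $C$ tends to infinity, and a compactness argument produces a ray in $C$ converging to $\xi$.

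Case (1) requires no proof. Case (2), when $u$ and $v$ are adjacent with $W = B_u \cap B_v$, is easy: the containment $\bnd W \subseteq \bnd B_u \cap \bnd B_v$ is immediate, and for the reverse I pick $\xi$ in the intersection and $p \in W$. Since $p \in B_u$, $\xi \in \bnd B_u$, and $B_u$ is closed and convex, the geodesic ray from $p$ to $\xi$ lies entirely in $B_u$; the same applies with $B_v$; so the ray lies in $W$, giving $\xi \in \bnd W$.

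The hard case is (3), where the wall $W$ need not be contained in $B_u$ or $B_v$, so the trick from Case (2) of sending a ray into both blocks simultaneously fails. Instead I would invoke the separation property for the $\T$-path: any $X$-path from $B_u$ to $B_v$ must cross $W$. Fix $p \in W$ and let $c$ be the ray from $p$ to $\xi \in \bnd B_u \cap \bnd B_v$. By the key observation there is a single $M > 0$ with $d(c(t), B_u) \leq M$ and $d(c(t), B_v) \leq M$ for every $t \geq 0$. Choosing $a_t \in B_u$ and $b_t \in B_v$ near-realizing these distances, the geodesic segment $[a_t, b_t]$ has length at most $2M$ and must meet $W$ at some $w_t$, so $d(c(t), w_t) \leq 3M$. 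Hence $c$ stays within bounded distance of the closed convex set $W$, and the key observation applied to $W$ yields $\xi \in \bnd W$. The main obstacle is really only setting up this bounded-distance estimate in Case (3); once the general CAT(0) lemma is in hand, the rest is bookkeeping.
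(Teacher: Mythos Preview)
Your proposal is correct and follows essentially the same route as the paper. Both arguments, in the non-adjacent case, choose for each time $t$ a point of $B_u$ and a point of $B_v$ that are uniformly close to a ray representing $\xi$, use the separation property of $W$ to find a point of $W$ on the segment between them, and then invoke convexity of $W$ to produce a ray in $W$ asymptotic to $\xi$. The only cosmetic difference is that the paper works with two concrete rays $c_0\subset B_u$ and $c_1\subset B_v$ based in the respective blocks, whereas you package the same estimate via your preliminary ``bounded distance from a convex set'' lemma and a single ray based in $W$; neither choice changes the substance of the argument.
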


\begin{proof}
	We may assume that $\bnd B_u \cap B_v \neq \emptyset$, so we are in either case (2) or case (3). If $u$ and $v$ are adjacent, then case (2) follows immediately. Suppose then that $\alpha\in \bnd B_u \cap \bnd B_v$ and that $u$ and $v$ are not adjacent in $\T$. Let $W$ be any wall separating $B_u$ and $B_v$. 
	
	Pick basepoints $x_0\in B_u$ and $x_1\in B_v$ and let $y$ be a point in $W$ along the geodesic segment from $x_0$ to $x_1$. Let $c_0$ and $c_1$ be rays representing $\alpha$ based at $x_0$ and $x_1$, respectively. For each $n\in \N$, let $y_n$ be a point in $W$ along the geodesic segment from $c_0(n)$ to $c_1(n)$. Since $W$ separates the blocks, these $y_n$ exist. Since $c_0$ and $c_1$ are asymptotic, there is a constant $K$ such that $d(c_0(t),c_1(t))\leq K$ for all $t$. Therefore $y_n$ is $K$-close to both $c_0$ and $c_1$. The limit of the geodesic segments from $y$ to $y_n$, therefore, is asymptotic to $c_0$ and $c_1$, so is a representative of $\alpha$. Since $W$ is convex, then $\alpha\in \bnd W$.
\end{proof}

\begin{remark}
	The following should be noted from the proof above: it is possible for $\alpha\in \bnd B$ but a ray representing $\alpha$ is disjoint from $B$ entirely. This happens above with the block $B_u$ and the ray $c_1$.
\end{remark}

We use the following terminology and the definition below from \cite{CK00}. A ray $c$ \textit{enters} a block $B$ if for some time $t$, $c(t)\in B$ and $c(t)$ is not in any other block (and therefore is not in a wall). Similarly, a ray $c$ \textit{exits} a block $B$ if for some $t$, $c(t)\in B$ and not in any wall and for some $t'>t$ $c(t')\notin B$. 

\begin{defn}
	Let $x_0\notin W$ for all $W\in \W$. The \textit{itinerary} of a ray $c$ based at $x_0$, denoted $\Itin_{x_0}(c)$, is the sequence of blocks $c$ enters. If $x_0$ is understood, then the itinerary will be denoted $\Itin(c)$. Abusing notation, if the basepoint is understood and $c(\infty)=\alpha\in \bnd X$, then $\Itin(\alpha)$ is used to denote the itinerary of the ray representing $\alpha$. 
	
\end{defn}

In order for this definition to be well defined, we need the following lemma:

\begin{lem}
	If $\B$ is a block decomposition for $X$, then there are points of $X$ which are not in any walls. In fact, for each $B\in \B$, there are points of $B$ which do not lie in any walls.
\end{lem}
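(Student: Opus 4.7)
The plan is to use the parity condition (3) together with the $\epsilon$-condition (4) to show that the walls contained in any single block $B$ are pairwise separated by a uniform distance of at least $2\epsilon$, and then to invoke connectedness of $B$ to rule out the possibility that these walls cover $B$. The ``in fact'' statement yields the first statement of the lemma immediately.

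First I would argue that any two blocks $B_1, B_2$ that both meet a fixed block $B$ must themselves be disjoint: by the parity condition both $B_1$ and $B_2$ have parity opposite to $B$, hence the same parity as each other, so they cannot intersect. Next, the $\epsilon$-condition yields $N_\epsilon(B_1) \cap N_\epsilon(B_2) = \emptyset$, and a standard CAT(0) midpoint argument upgrades this to $d(B_1, B_2) \geq 2\epsilon$. (If $d(b_1, b_2) < 2\epsilon$ for some $b_i \in B_i$, the midpoint of the geodesic $[b_1, b_2]$ would lie in $N_\epsilon(B_1) \cap N_\epsilon(B_2)$.) Since the walls $W_i := B \cap B_i$ are contained in $B_i$, I conclude $d(W_1, W_2) \geq 2\epsilon$ for any two distinct walls inside $B$. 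In particular, distinct walls inside $B$ are disjoint.

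By condition (2), $B$ contains at least two walls $W_1, W_2$, each non-empty by the very definition of a wall. Suppose for contradiction that $B$ is covered by all the walls it contains: $B = \bigcup_i W_i$. Fix the wall $W_1$ and set $A := \bigcup_{i \neq 1} W_i$. Every point of $A$ lies at distance at least $2\epsilon$ from $W_1$, so by continuity of the distance function every point of $\overline{A}$ does as well; thus $\overline{A} \cap W_1 = \emptyset$. Since $W_1$ is closed, we also have $\overline{W_1} \cap A = W_1 \cap A = \emptyset$. Hence $W_1$ and $A$ are topologically separated, producing a disconnection $B = W_1 \sqcup A$ into two non-empty pieces, contradicting connectedness of $B$ as a convex subset of a CAT(0) space. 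The main subtle point is extracting a uniform positive lower bound on distances between disjoint blocks from the $\epsilon$-condition, which is exactly what the midpoint convexity argument provides; beyond that I do not expect any serious obstacle.
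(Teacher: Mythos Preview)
Your proof is correct and rests on the same key observation as the paper's: the parity condition together with the $\epsilon$-condition forces any two distinct walls inside a fixed block $B$ to be at distance at least $2\epsilon$ apart. The only difference is in the endgame. The paper, having established $D:=\inf_{W'\subset B}d(W,W')>0$, directly exhibits points of $B$ in the set $(N_D(W)\setminus W)\cap B$; you instead argue that if the pairwise $2\epsilon$-separated walls covered $B$, they would disconnect the convex (hence connected) set $B$. Both conclusions are valid, and your connectedness argument is arguably cleaner since it avoids having to check that $(N_D(W)\setminus W)\cap B$ is nonempty.
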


\begin{proof}
	Let $B\in \B$ and let $W$ be a wall of $B$ such that $B\cap B_0 = W$. Define $D=\inf_{W'\subset B} d(W,W')$.
	
	If $D>0$, then all points in $N_D(W)\setminus W \cap B$ are points of $B$ which do not lie in any walls. Since $D>0$, then this set is non-empty.
	
	Assume, for contradiction, that $D=0$. Then pick $\delta< \epsilon$ and consider $N_\delta(W)$. Since $D=0$, there is some wall $W'\subset B$ which non-trivially intersects $N_\delta(W)$. Let $B\cap B'=W'$. Since $\delta<\epsilon$, $N_\delta(W)\cap W'$ means that in fact $N_\epsilon(B_0)\cap W'\neq 0$. Since $W'\subset B'$ and applying the $\epsilon$-condition on the block decomposition, $B_0\cap B'\neq \emptyset$. But then $B, B', B_0$ all pairwise intersect, contradicting the parity condition. Therefore $D>0$.
\end{proof}	

\begin{remark} It is important to note that it is possible for $\alpha\in \bnd B$ but for $B\notin \Itin(\alpha)$. This can happen as in the proof of Lemma \ref{Block int}, where $c_0$ represent $\alpha$, $\alpha\in \bnd B_v$, but $\Itin_{x_0}(c_0)=\Itin_{x_0}(\alpha)=\{B_u\}$. In other words, this can happen when $\alpha$ lies in the boundary of multiple blocks.
\end{remark}

The lemma below follows exactly in the same way as \cite{CK00} (Lemma 2), which uses the fact that each $B$ is convex and and $B$'s topological frontier is covered by the collection of blocks corresponding to the link in $\T$ of the vertex corresponding to $B$.

\begin{lem} [{\cite[2]{CK00}}] \label{Itinerary to Nerve} 
	Let $X$ be a $\cat(0)$ space with a block decomposition $\B$ and corresponding nerve $\T$. The itinerary for any $\alpha\in \bnd X$ consists of blocks which correspond to vertices on a geodesic segment or ray in $\T$.  
\end{lem}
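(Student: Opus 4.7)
The plan is to argue by contradiction, leveraging convexity of blocks together with the tree structure of $\T$.

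As a preliminary, I would record that for any convex $C\subset X$ and any geodesic ray $c$, the preimage $c\inverse(C)$ is a closed interval in $[0,\infty)$ (possibly empty or a single point); applied to a block $B_v$, this says the ray cannot re-enter $B_v$ after once leaving it. I will also use the fact that any continuous path from a point of $B_v$ to a point of $B_u$ must meet $B_w$ for every vertex $w$ on the $\T$-geodesic from $v$ to $u$. This follows from the notion of separating walls introduced just before the lemma: each wall $W_e$ with $e$ on the $[v,u]$-geodesic separates $B_v$ from $B_u$, so the path must cross it, and every such wall lies inside the two blocks indexed by its endpoints.

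Now suppose for contradiction that the itinerary of $\alpha$ contains three vertices $v_1,v_2,v_3$ that do not lie on a common geodesic of $\T$. Then in $\T$ these vertices span a nondegenerate tripod whose central vertex $v_0$ satisfies $v_0\neq v_2$, since $v_2$ is off the $\T$-geodesic from $v_1$ to $v_3$. Let $s_i$ be the first time at which $c$ enters the interior of $B_{v_i}$ and, relabelling if needed, assume $s_1<s_2<s_3$. Applying the path-passing fact to $c|_{[s_1,s_2]}$ and $c|_{[s_2,s_3]}$ forces each segment to meet $B_{v_0}$, producing times $t\in[s_1,s_2]$ and $t'\in[s_2,s_3]$ with $c(t),c(t')\in B_{v_0}$. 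But $c(s_2)$ lies in the interior of $B_{v_2}$ and $v_0\neq v_2$, so $c(s_2)\notin B_{v_0}$. Thus $c\inverse(B_{v_0})$ contains $t$ and $t'$ but misses $s_2\in(t,t')$, contradicting convexity of that preimage.

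For an infinite itinerary, once every three vertices are known to lie on a common geodesic in $\T$, I would finish with a standard tree argument: fix an itinerary vertex $v_1$ and consider the subtree $\bigcup_v [v_1,v]$ as $v$ ranges over the remaining itinerary vertices. A branch point in this subtree would yield three itinerary vertices spanning a nondegenerate tripod, contradicting what has just been proved; hence the subtree is either a segment or a ray based at $v_1$. The main subtlety I anticipate is the degenerate case of the tripod argument in which the central vertex $v_0$ coincides with $v_1$ or $v_3$ rather than being a new vertex; the same interval-disconnection contradiction still goes through, but one must run it on $c\inverse(B_{v_1})$ or $c\inverse(B_{v_3})$ rather than on $c\inverse(B_{v_0})$.
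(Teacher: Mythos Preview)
Your argument is correct, but it takes a different route from the paper's (which does not give its own proof, citing \cite[Lemma~2]{CK00} and recording the two ingredients used there: convexity of each block, and the fact that the topological frontier of $B_v$ is covered by the blocks indexed by the link of $v$ in $\T$). The Croke--Kleiner argument is local and constructive: when the ray exits $B_v$ the frontier fact forces it into some $B_w$ with $w$ adjacent to $v$, and convexity prevents return; hence consecutive itinerary entries are adjacent in $\T$ and never repeat, so the itinerary traces out a geodesic path visited in order. Your route is global and by contradiction: you use the separating-wall property to push the ray through the median block of any non-collinear triple of itinerary vertices, then invoke convexity of $c\inverse(B_{v_0})$ to derive the contradiction. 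Both approaches are valid for the statement as written, but Croke--Kleiner's yields for free the stronger conclusion that consecutive entries are adjacent and are visited in geodesic order---information the paper leans on later (e.g., when an infinite itinerary is said to ``represent a geodesic ray in $\T$'')---whereas yours establishes only that the itinerary \emph{set} lies on a geodesic; the ordering is recoverable in your framework with one more convexity step, but it is worth flagging. One minor remark: the degeneracy you worry about in your final paragraph cannot occur---if three tree vertices fail to lie on a common geodesic, their median is automatically distinct from all three, so $v_0\notin\{v_1,v_2,v_3\}$ from the start.
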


This lemma gives rise to the following definition:

\begin{defn}

	Fix a basepoint $x_0$ which is not in any wall and let $c$ be a geodesic ray based at $x_0$ representing the point $\alpha\in \bnd X$. The point $\alpha$ is \textit{rational} if $\Itin(c)$ is finite. The point $\alpha$ is \textit{irrational} if $\Itin(c)$ is infinite.
\end{defn}

In order for this definition to be useful, being rational or irrational must be independent of representative for $\alpha$.

\begin{lem}
	If $c$ represents $\alpha$ and $c$ is rational, then all representatives of $\alpha$ are rational. 
\end{lem}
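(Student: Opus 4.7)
The plan is to recast rationality as a condition on the set of walls that the ray crosses, a condition that turns out to depend on the basepoint only up to a finite symmetric difference. By Lemma~\ref{Itinerary to Nerve}, $\Itin(c)$ is a sequence of blocks corresponding to vertices on a path in the nerve tree $\T$, with consecutive blocks meeting along walls. Convexity of each block together with the uniqueness of CAT(0) geodesics implies that no block can appear twice in $\Itin(c)$, so $c$ crosses exactly $|\Itin(c)| - 1$ walls, and hence $c$ is rational if and only if $c$ crosses only finitely many walls.

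The key structural step is to show that each wall $W_e$ separates $X$ into two closed convex subsets. Removing the edge $e$ from $\T$ partitions the vertices into subtrees $T_e^\pm$; set $X_e^\pm = \bigcup_{v \in T_e^\pm} B_v$. Since blocks intersect precisely when their vertices are adjacent in $\T$, a direct check gives $X_e^+ \cap X_e^- = W_e$. To verify convexity of $X_e^+$, suppose a CAT(0) geodesic $\gamma$ with endpoints in $X_e^+$ had an interior point in $X_e^- \setminus W_e$; then $\gamma$ would meet $W_e$ at some first time $\tau$ and last time $\tau' > \tau$, and by convexity of $W_e$ together with uniqueness of geodesics, $\gamma|_{[\tau,\tau']}$ would lie entirely in $W_e$, contradicting the assumed excursion.

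Granted the separation, the characterization of which walls a ray crosses becomes clean: if $x_0 \in X_e^+$, then the ray from $x_0$ to $\alpha$ crosses $W_e$ if and only if $\alpha \notin \bnd X_e^+$, since convexity of $X_e^+$ traps the ray inside whenever $\alpha$ lies in $\bnd X_e^+$. Consequently, given another representative $c'$ of $\alpha$ based at $x_0'$ (also chosen outside all walls), the walls on which $c$ and $c'$ differ are precisely the walls $W_e$ for which $x_0$ and $x_0'$ lie on opposite sides --- namely, the walls crossed by the finite-length geodesic segment $[x_0, x_0']$. The $\epsilon$-condition together with the parity condition forces any two walls lying in a common block to be at distance at least $2\epsilon$, so this compact segment enters only finitely many blocks and therefore crosses only finitely many walls.

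Combining these, if $c$ is rational then the set of walls crossed by $c'$ differs from that of $c$ in only finitely many walls, so $c'$ itself crosses only finitely many walls and is therefore rational. The main technical obstacle is the convexity of $X_e^\pm$ in paragraph two; once each wall cleanly bisects $X$ into convex halves, the argument reduces to finite symmetric-difference bookkeeping.
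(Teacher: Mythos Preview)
Your argument is correct and takes a genuinely different route from the paper's. The paper argues by direct contradiction: if some representative $c'$ based at $x_1$ had infinite itinerary, then since blocks at tree-distance $\geq 2n$ are at metric distance $\geq 2n\epsilon$ (a consequence of the $\epsilon$-condition), $c'$ would visit points arbitrarily far from the terminal block $B$ of $\Itin(c)$; but $c'$ is asymptotic to $c$, which eventually lies in $B$, so $c'$ stays in a bounded neighborhood of $B$. Your approach instead builds convex half-spaces $X_e^\pm$ for each edge of $\T$ and shows that the set of walls a representative crosses depends on the basepoint only up to the finitely many walls separating the two basepoints. The paper's proof is shorter and needs nothing beyond the raw $\epsilon$-separation to force a metric contradiction; your half-space picture is more structural and yields the sharper statement that the wall-crossing set of $\alpha$ is basepoint-independent up to a controlled finite symmetric difference, machinery that could be reused when tracking how itineraries vary under change of basepoint.

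Two small points worth tightening. First, your convexity argument for $X_e^\pm$ tacitly uses that these half-spaces are \emph{closed}, so that the geodesic $\gamma$ actually attains points of $W_e$ at the extremal times $\tau,\tau'$; closedness is not automatic for an infinite union of closed blocks, but it follows from the $\epsilon$-condition by the same style of reasoning you invoke later (a limit point outside $X_e^+$ would force two same-parity blocks to have intersecting $\epsilon$-neighborhoods). Second, the claim that $c$ and $c'$ differ on $W_e$ \emph{precisely} when $x_0,x_0'$ lie on opposite sides is slightly too strong---if $\alpha\in\bnd W_e$ then neither ray crosses $W_e$ regardless of basepoint---but this only shrinks the symmetric difference, so the conclusion is unaffected.
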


\begin{proof}
	Let $\alpha \in \bnd X$ and let $c\colon [0,\infty)\to X$ be a ray representing $\alpha$ based at $x_0$ be a finite itinerary and let $B$ be the final block of this itinerary. So $\alpha\in \bnd B$. Let $c'\colon [0,\infty)\to X$ be a ray representing $\alpha$ based at $x_1$, and suppose, for contradiction, that $\Itin_{x_1}(c')$ is infinite. By Lemma \ref{Itinerary to Nerve}, $\Itin(c')$ represents a geodesic ray in $\T$. 
	
	First, we show that if $d(v_B,v_{B'})\geq 2n$ for $v_B,v_{B'}\in \T$, then $d(B,B')\geq 2n\epsilon$ in $X$. Let $B=B_1,B_2,...,B_{2n}=B'$ denote the $2n$-blocks between $B$ and $B'$. To see $d(B,B')\geq 2\epsilon$, notice if $d(v_B,v_{B'})=2$, then by the $\epsilon$-condition, $d(B,B')\geq 2\epsilon$ since the $\epsilon$-neighborhoods of the blocks do not intersect. So let $x\in B$ and $x'\in B'$ and let $[x,x']$ be the geodesic segment between them. For each $0\leq i \leq n$, there are $z_i\in B_{2i}$ such that $d(z_i,z_{i+2})\geq 2\epsilon$. Therefore $d(x,x')\geq 2n\epsilon$. 
	
	Now, for any $K>0$, there exists some $n$ such that $2n\epsilon>K$. Furthermore, since $\Itin(c')$ represent a geodesic ray in $\T$, there is a block $B'$ such that $d(v_B,v_{B'})\geq 2n$. Thus $c'$ is not in the $K$-neighborhood of $B$ for any $K$. But $c$ and $c'$ are asymptotic, a contradiction. Thus $\Itin(c')$ must be finite.
	
\end{proof}

\begin{corollary}
	If $\Itin(c)$ is infinite for some ray $c$, then all rays representing $c(\infty)$ have infinite itineraries.
\end{corollary}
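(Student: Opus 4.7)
The plan is to deduce this corollary as an immediate logical consequence of the preceding lemma, using the contrapositive. The preceding lemma asserts that if some representative $c$ of $\alpha = c(\infty)$ has a finite itinerary, then every representative of $\alpha$ has a finite itinerary. Dichotomizing this statement, for each $\alpha \in \bnd X$, either every representative of $\alpha$ has a finite itinerary, or every representative has an infinite itinerary.

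I would argue by contradiction. Suppose $c$ is a ray with $\Itin(c)$ infinite, representing $\alpha = c(\infty)$, and suppose toward a contradiction that some other ray $c'$ based at a (possibly different) basepoint and representing the same boundary point $\alpha$ has $\Itin(c')$ finite. Then applying the preceding lemma directly to $c'$ (taking $c'$ to play the role of the ray with finite itinerary), we conclude that every ray representing $\alpha$ must have a finite itinerary. In particular, $\Itin(c)$ must be finite, contradicting the standing hypothesis that $\Itin(c)$ is infinite.

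There is no substantive obstacle here: the corollary is a one-line logical reformulation of the preceding lemma and requires no new geometric input. The only mild subtlety is notational, in that the preceding lemma is phrased for an arbitrary starting basepoint $x_1$ (with $x_0 \notin W$ for all walls $W$ already required for an itinerary to be defined at all), so one must simply note that the hypothesis of the lemma is satisfied by whichever representative of $\alpha$ is assumed to be rational, and its conclusion then applies to all representatives including $c$.
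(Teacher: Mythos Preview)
Your proof is correct and takes essentially the same approach as the paper: assume some representative $c'$ of $c(\infty)$ has finite itinerary, apply the preceding lemma to conclude all representatives (including $c$) are rational, and derive a contradiction. The paper's argument is identical, just stated more tersely.
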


\begin{proof}
	Suppose $c'$ represents $c(\infty)$ and is finite. Then by the argument above, all rays representing $c'(\infty)$ have finite itineraries, which cannot happen since $\Itin(c)$ is infinite.
\end{proof}
Let $\partial_R X$ and $\partial_I X$ denote the rational and irrational boundary points of $\bnd X$, respectively. Then we have:
\begin{enumerate}
	\item $\partial_R X = \bigcup\limits_{B\in \B} \bnd B$
	\item $\partial_I X = \bnd X \setminus \partial_R X$
\end{enumerate}

Once a basepoint is fixed we can use Lemma \ref{Itinerary to Nerve} to create a map \linebreak $\eta\colon  \partial_I X \to \bnd \T$, mapping a point $\alpha$ to the end of $\T$ representing its itinerary at that basepoint. This map is surjective but not necessarily injective. The points in $\eta \inverse(\alpha)$ are the points of the $\bnd X$ with the same itinerary as $\alpha$. We will care when $\alpha$ is the only boundary point with its itinerary.

\begin{defn}
	If $\alpha$ is irrational and is the only point in $\bnd X$ with its itinerary, then we say $\alpha$ is \textit{lonely}. We will denote the set of lonely rays $\lonely X$.
\end{defn}

\begin{thm}\label{combo}
	Let $X$ be a $\cat(0)$ space and $\B$ a block decomposition. Suppose the following conditions holds:
	
	\begin{enumerate}
		\item for each $B\in \B$, $\bnd B$ is path connected,
		\item for each $W\in \W$, $\bnd W$ is non-empty,
		\item all irrational rays are lonely
	\end{enumerate}
	then $\bnd X$ is path connected.
\end{thm}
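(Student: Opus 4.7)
The plan is to split $\partial X = \partial_R X \cup \partial_I X$, establish path connectedness of the rational part first, and then connect each irrational point $\alpha$ to some rational point by a path in $\partial X$, exploiting loneliness to force the convergence. Throughout, fix a basepoint $x_0$ not lying in any wall, so $x_0 \in B_0$ for a unique block $B_0$.

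\textbf{Step 1: rationals are connected.} Given $\alpha \in \partial B_u$ and $\beta \in \partial B_v$, let $u = v_0, v_1, \ldots, v_n = v$ be the $\T$-geodesic and set $W_i = B_{v_i} \cap B_{v_{i+1}}$. By (2) each $\partial W_i$ is non-empty, so pick $p_i \in \partial W_i \subseteq \partial B_{v_i} \cap \partial B_{v_{i+1}}$. By (1) each block boundary is path connected, so I can choose a path from $\alpha$ to $p_0$ inside $\partial B_{v_0}$, a path from $p_{i-1}$ to $p_i$ inside $\partial B_{v_i}$ for $1 \leq i \leq n-1$, and a path from $p_{n-1}$ to $\beta$ inside $\partial B_{v_n}$. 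Concatenation gives a path in $\partial_R X$.

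\textbf{Step 2: rationals approximate irrationals.} Let $\alpha$ be irrational with itinerary $B_0, B_1, B_2, \ldots$ and walls $W_i = B_i \cap B_{i+1}$. For each $i$ pick any $p_i \in \partial W_i$; the claim is $p_i \to \alpha$. If instead $p_{i_k} \to \beta$ along a subsequence, let $c_k$ be the geodesic ray from $x_0$ to $p_{i_k}$; passing to a further subsequence, $c_k \to c_\infty$ uniformly on compacta, where $c_\infty$ represents $\beta$. By Lemma~\ref{Itinerary to Nerve}, $\Itin(c_k)$ is the unique $\T$-geodesic from $B_0$ to a vertex of $B_{i_k}$ or $B_{i_k+1}$, namely $B_0, B_1, \ldots, B_{i_k}$. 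A convexity argument, using the walls $W_{j-1}$ as closed convex separators together with the $\epsilon$-condition, shows that for each fixed $j$ the entry time of $c_k$ into $B_j$ is bounded in $k$, so $c_\infty$ itself enters every $B_j$. Hence $\Itin(c_\infty) = B_0, B_1, \ldots = \Itin(\alpha)$, and loneliness (3) forces $\beta = \alpha$, contradicting $\beta \neq \alpha$.

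\textbf{Step 3: assembling the path.} Define $\gamma \colon [0,1] \to \partial X$ by $\gamma(1 - 2^{-i}) = p_i$, $\gamma(1) = \alpha$, and on each dyadic piece $[1 - 2^{-i}, 1 - 2^{-(i+1)}]$ let $\gamma$ trace a path in $\partial B_{i+1}$ from $p_i$ to $p_{i+1}$; such a path exists by (1) since both endpoints lie in $\partial W_i \cup \partial W_{i+1} \subseteq \partial B_{i+1}$. Continuity on $[0,1)$ is clear. Continuity at $1$ reduces to showing every neighborhood $U$ of $\alpha$ eventually contains $\partial B_i$; otherwise one extracts $\beta_i \in \partial B_i \setminus U$ with subsequential limit $\beta' \in \partial X \setminus U$, and the same argument as in Step 2 applied to the rays from $x_0$ to $\beta_i$ yields $\Itin(\beta') = \Itin(\alpha)$, whence $\beta' = \alpha$ by loneliness, a contradiction. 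Combining this path from $p_0$ to $\alpha$ with Step 1 path-connects any two boundary points.

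\textbf{Main obstacle.} The technical crux is the uniform boundedness of entry times in Step 2, that is, showing the limit ray $c_\infty$ genuinely enters every block $B_j$ rather than asymptotically stalling within some proper initial union $B_0 \cup \cdots \cup B_J$. This requires CAT(0) nearest-point projection estimates to control where the approximating geodesics $c_k$ pierce each wall $W_{j-1}$, in conjunction with the $\epsilon$-condition from the definition of a block decomposition. Once this geometric input is in hand, loneliness upgrades an itinerary match to an equality of boundary points, and the dyadic concatenation in Step 3 produces the path.
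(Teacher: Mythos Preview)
Your overall architecture matches the paper's exactly: first prove $\partial_R X$ is path connected using the tree structure and conditions (1)--(2), then connect each irrational $\alpha$ to $\partial_R X$ by a path running through successive block boundaries $\partial B_i$, with continuity at the endpoint forced by showing every subsequential limit shares $\alpha$'s itinerary and invoking loneliness. Step~1 and the dyadic concatenation in Step~3 are fine and essentially identical to the paper's Proposition preceding Proposition~\ref{lonely plus rational is pconn}.

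The genuine gap is precisely where you flag it, but your proposed resolution is not the right one. The claim that ``for each fixed $j$ the entry time of $c_k$ into $B_j$ is bounded in $k$'' can fail: nothing in the $\epsilon$-condition or nearest-point projection prevents the limiting ray $c_\infty$ from remaining forever in $B_0\cup\cdots\cup B_{j-1}$ while $c_\infty(\infty)$ lies in $\partial W_{j-1}\subset\partial B_j$. In that scenario the $c_k$ enter $B_j$ at times tending to infinity, $c_\infty$ never enters $B_j$, and your argument stalls with a rational limit $\beta$ whose itinerary is a proper initial segment of $\Itin(\alpha)$---loneliness says nothing about such $\beta$.

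What the paper actually uses is a dichotomy due to Croke--Kleiner (Lemma~\ref{Croke Kleiner Converging Rays}): if $r_n\to r$ and every $r_n$ meets a closed convex set $B_m$, then either $r$ meets $B_m$ or $r(\infty)\in\partial B_m$. The second alternative is exactly the failure of your bounded-entry-times claim, and the paper disposes of it with a further argument: once $\alpha'\in\partial B_m$ for some $m$, the same dichotomy forces $\alpha'\in\partial B_M$ for \emph{every} $M\ge m$, and from this one extracts that each $B_i$ with $i<M$ lies in $\Itin(\alpha')$; letting $M\to\infty$ recovers the full itinerary. So the fix is not a sharper projection estimate but rather accepting that $c_\infty$ may fail to enter $B_j$, and handling the resulting case $\beta\in\bigcap_M\partial B_M$ separately. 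Replace your bounded-entry-times paragraph with this two-case analysis and the proof goes through.
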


\begin{remark}
	It should be noted that we only assume $\bnd W$ is non-empty. We do not need that $\bnd W$ is path connected. The walls having non-empty boundary is equivalent to $\bnd X$ being connected.
\end{remark}

Going forward, we shall refer to condition (3) as the \textit{lonely condition}.

Before proving Theorem \ref{combo}, it should be noted that conditions (1) and (2), without (3), are insufficient for guaranteeing $\bnd X$ being path connected. Returning to Example \ref{example: Croke Kleiner}, the block decomposition satisfies conditions (1) and (2), but $\bnd X$ is not path connected. In \cite{CMT}, the authors explicitly construct a pair of distinct irrational rays with the same itinerary which are in a different path component from the union of the block boundaries. In that example, there is something akin to the Topologist's Sine Curve in the boundary, which obstructs path connectivity. Theorem \ref{combo} shows that the lonely condition is sufficient for avoiding this pathology.

We will prove Theorem \ref{combo} in two parts. First we will show that $\partial_R X$ is path connected. We then show if $\partial_R X$ is path connected, then $\partial_R X \cup \lonely X$ is path connected as well. By the lonely condition, $\lonely X = \bnd_I X$, concluding the proof.

\begin{proposition}
	Suppose for each $B\in \B$, $\bnd B$ is path connected and for each $W\in \W$, $\bnd W$ is non-empty. Then $\partial_R X$ is path connected.
\end{proposition}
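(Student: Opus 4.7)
The plan is to exploit the tree structure of $\T_\B = \Nerve(\B)$ to build paths block-by-block through a finite chain of walls. Given any two rational boundary points $\beta, \gamma \in \partial_R X$, pick blocks $B_u, B_v \in \B$ with $\beta \in \bnd B_u$ and $\gamma \in \bnd B_v$. Since $\T_\B$ is a tree, there is a unique geodesic edge path from $u$ to $v$, say $u = v_0, v_1, \ldots, v_n = v$, with corresponding edges $e_i$ joining $v_{i-1}$ and $v_i$ and associated walls $W_i := W_{e_i} = B_{v_{i-1}} \cap B_{v_i}$. By hypothesis~(2), each $\bnd W_i$ is non-empty, so we may choose a point $\alpha_i \in \bnd W_i$. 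Since $W_i \subset B_{v_{i-1}}$ and $W_i \subset B_{v_i}$ are convex subspaces, $\alpha_i$ lies in both $\bnd B_{v_{i-1}}$ and $\bnd B_{v_i}$.

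Now invoke hypothesis~(1): each $\bnd B_{v_i}$ is path connected. Choose a path in $\bnd B_u$ from $\beta$ to $\alpha_1$; for $1 \leq i \leq n-1$, choose a path in $\bnd B_{v_i}$ from $\alpha_i$ to $\alpha_{i+1}$; and finally choose a path in $\bnd B_v$ from $\alpha_n$ to $\gamma$. Concatenating these finitely many paths yields a path from $\beta$ to $\gamma$ whose image lies entirely in $\bigcup_i \bnd B_{v_i} \subset \partial_R X$.

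The one point to verify is that each of these concatenated pieces is continuous as a map into $\partial_R X$ with the subspace topology inherited from $\bnd X$, not merely as a map into $\bnd B_{v_i}$ with its own cone topology. This is immediate: since each $B_{v_i}$ is a closed convex subset of $X$, every geodesic ray in $B_{v_i}$ is a geodesic ray in $X$, and the definition of the cone topology on $\bnd X$ restricts directly to the cone topology on $\bnd B_{v_i}$, so the inclusion $\bnd B_{v_i} \hookrightarrow \bnd X$ is continuous (indeed a topological embedding). Thus the concatenated path is continuous into $\bnd X$, and its image lies in $\partial_R X$ by construction. As $\beta, \gamma$ were arbitrary, $\partial_R X$ is path connected. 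There is no real obstacle here; the argument is essentially the standard tree-of-spaces chaining argument, made possible precisely because the nerve is a tree (so the chain of intermediate blocks is canonical and finite) and each adjacent pair of blocks meets in a wall whose boundary is non-empty.
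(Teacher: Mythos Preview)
Your proof is correct and follows essentially the same approach as the paper: chain along the unique geodesic in the tree $\T_\B$, pick a point in each intermediate wall boundary, and concatenate paths inside the successive block boundaries. The paper phrases this as an induction on the length of the itinerary while you write the chain directly, and you add the (useful) remark that the inclusion $\bnd B_{v_i}\hookrightarrow\bnd X$ is continuous, but the underlying argument is the same.
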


\begin{proof}
	Fix $x_0\in X\setminus \W$.	Let $x_0\in B$ and $B'$ be an adjacent block. We show that $\bnd B \cup \bnd B'$ is path connected then proceed by induction. Since block boundaries are path connected, it suffices to show that there is a path from a point in $\bnd B$ to a point in $\bnd B'$. Let $\alpha\in \bnd B$ and $\alpha' \in \bnd B'$. Since $B$ and $B'$ are adjacent, there is a $W\in \W$ such that $B\cap B' = W$. Let $\beta\in \bnd W$, which exists since $\bnd W$ is non-empty. Since $\bnd W = \bnd B \cap \bnd B'$ and each block has path connected boundary, there are paths from $\alpha$ to $\beta$ and from $\beta$ to $\alpha'$ in $\bnd B \cup \bnd B'$. The concatenation of these paths is a path from $\alpha$ to $\alpha'$.
	
	Let $\Itin(\alpha')= \{B_1, B_2, ..., B_n\}$ with $B_1=B$. The union of the boundary of adjacent blocks is path connected, so $\bnd B_{n-1} \cup \bnd B_n$ is path connected. By the induction hypothesis, $\bnd B_1 \cup \bnd B_2 \cup ... \cup \bnd B_{n-1}$ is path connected, and therefore $\bnd B_1 \cup \bnd B_2 \cup ... \cup \bnd B_n$ is path connected, which concludes the proof. 
\end{proof}

Before proving the next proposition, we need the following lemma from \cite{CK02}:

\begin{lem} \label{Croke Kleiner Converging Rays}
	Let $X$ be a locally compact CAT(0) space with a closed, convex subset $Y$. Let $x_0\in X$ and $\alpha_i\in \bnd X$ with $\alpha_i\to \alpha$ and the rays $r_i$ based at $x_0$ representing $\alpha_i$ and $r$ the ray based at $x_0$ representing $\alpha$. If $r_i$ intersects $Y$ for all $i$, then either $r$ intersects $Y$ or $\alpha \in \bnd Y$.
\end{lem}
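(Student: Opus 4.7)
The plan is to pick, for each $i$, a point $y_i \in r_i \cap Y$, set $t_i := d(x_0,y_i)$, and split into two cases depending on whether the sequence $(t_i)$ admits a bounded subsequence or tends to infinity. These are the only two possibilities (after passing to a subsequence), and each will land in one of the two conclusions of the lemma. This dichotomy is the natural one because in a CAT(0) space, a sequence in a closed convex set $Y$ either subconverges inside $X$ to a point of $Y$, or escapes to infinity and so limits on a point of $\bnd Y$.

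For the bounded case, extract a subsequence with $t_{i_k} \to t_\infty \in [0,\infty)$. Since $X$ is locally compact and the $y_{i_k}$ all lie in some closed metric ball around $x_0$, they subconverge to a point $y \in X$, and $y \in Y$ by closedness of $Y$. The cone-topology convergence $r_{i_k} \to r$ is exactly uniform convergence of the ray parametrizations on compact subsets of $[0,\infty)$, so
\[ r_{i_k}(t_{i_k}) \longrightarrow r(t_\infty). \]
Thus $y = r(t_\infty) \in r \cap Y$, giving the first conclusion.

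For the unbounded case, $t_i \to \infty$, and I would show that $y_i \to \alpha$ in the cone topology on $\X$. To check this, fix a basic neighborhood $U(r,r_0,D)$ of $\alpha$. Since $y_i = r_i(t_i)$ with $t_i > r_0$ eventually, the projection to $\overline{B}(x_0,r_0)$ satisfies $\pi_{r_0}(y_i) = r_i(r_0)$. The hypothesis $\alpha_i \to \alpha$ is precisely the statement $d(r_i(r_0),r(r_0)) < D$ for $i$ large, so $y_i \in U(r,r_0,D)$. Hence $y_i \to \alpha$ in $\X$. Since the $y_i$ lie in the closed convex set $Y$, and for a closed convex subset of a CAT(0) space one has $\overline{Y} = Y \cup \bnd Y$ inside $\X$ (a standard consequence of the cone topology, see \cite[II.8]{BH99}), and since $\alpha \in \bnd X$ is not a point of $X$, we conclude $\alpha \in \bnd Y$.

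The main obstacle, if any, is the fact that $\overline{Y} \cap \bnd X = \bnd Y$ for closed convex $Y$; modulo that standard input the argument is just an unpacking of the definition of the cone topology, so I expect no serious difficulty. One small subtlety worth checking is that one really can assume $t_i > r_0$ eventually in the unbounded case (immediate from $t_i \to \infty$), and that the projection identity $\pi_{r_0}(r_i(t_i)) = r_i(r_0)$ holds when $t_i \geq r_0$ (immediate because $r_i$ is a geodesic through $x_0$).
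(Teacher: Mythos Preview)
The paper does not prove this lemma at all; it simply quotes it from \cite{CK02} and moves on. Your argument is correct and is essentially the standard proof: the bounded/unbounded dichotomy on $t_i = d(x_0,y_i)$, together with uniform-on-compacta convergence of $r_i$ to $r$ in the first case and the identification $\overline{Y}\cap\bnd X = \bnd Y$ for closed convex $Y$ in the second, is exactly how this is done.

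Two minor remarks. In the bounded case you do a little more work than necessary: once you have $t_{i_k}\to t_\infty$, the uniform convergence $r_{i_k}\to r$ on $[0,T]$ already gives $y_{i_k}=r_{i_k}(t_{i_k})\to r(t_\infty)$ directly, so the separate compactness extraction of a limit $y$ is redundant (though harmless). Also, strictly speaking you are using that closed balls in $X$ are compact, i.e.\ that $X$ is proper; this follows from local compactness plus completeness via the Hopf--Rinow theorem for length spaces, and in any case the paper works with proper $\cat(0)$ spaces throughout, so there is no issue.
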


\begin{proposition} \label{lonely plus rational is pconn}
	If $\partial_R X$ is path connected, then $\partial_R X \cup \lonely X$ is path connected.
\end{proposition}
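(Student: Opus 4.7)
The plan is to show that each $\alpha \in \lonely X$ can be joined by a path (lying in $\partial_R X \cup \lonely X$) to some point of $\partial_R X$. Since $\partial_R X$ is path connected by hypothesis, this yields path connectedness of $\partial_R X \cup \lonely X$.

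Fix $\alpha \in \lonely X$ with $\Itin(\alpha) = \{B_n\}_{n \geq 1}$ and set $W_n = B_n \cap B_{n+1}$. For each $n$ I would choose $\beta_n \in \bnd W_n$, which is nonempty by hypothesis (2) of Theorem \ref{combo}. Since $\beta_n, \beta_{n+1} \in \bnd B_{n+1}$ and $\bnd B_{n+1}$ is path connected by hypothesis (1), I can pick a path $\gamma_n$ inside $\bnd B_{n+1} \subset \partial_R X$ from $\beta_n$ to $\beta_{n+1}$. Reparameterizing $\gamma_n$ onto $[1 - 1/n,\, 1 - 1/(n+1)]$ and concatenating produces a map $\gamma \colon [0, 1) \to \partial_R X$ with $\gamma(1 - 1/n) = \beta_n$; setting $\gamma(1) = \alpha$ extends this to a candidate path into $\partial_R X \cup \lonely X$ that is automatically continuous on $[0, 1)$.

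The core of the argument is verifying continuity at $t = 1$, i.e.\ that $\gamma(t) \to \alpha$ in $\bnd X$ as $t \to 1^-$. Each $\gamma(t)$ lies in some $\bnd B_{n(t) + 1}$ with $n(t) \to \infty$, so by compactness of $\bnd X$ it suffices to show that every subsequential limit $\eta$ of $(\gamma(t_k))_{t_k \to 1}$ equals $\alpha$. For fixed $m$ and $k$ large enough that $n_k \geq m$, the ray $r_{\gamma(t_k)}$ from $x_0$ is asymptotic to a point of $\bnd B_{n_k + 1}$, so it must approach the closed convex block $B_{n_k + 1}$; because $W_m$ separates $B_1 \ni x_0$ from $B_{n_k + 1}$ in the tree $\T$, the ray necessarily crosses $W_m$. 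Applying Lemma \ref{Croke Kleiner Converging Rays} with $Y = W_m$, the limit ray $r_\eta$ either meets $W_m$ or $\eta \in \bnd W_m$. Iterating over all $m$ and combining with Lemma \ref{Block int}, I would argue that $\Itin(r_\eta) = \{B_n\}$; loneliness of $\alpha$ (hypothesis (3) of Theorem \ref{combo}, which makes $\alpha$ the unique boundary point with this itinerary) then forces $\eta = \alpha$.

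The step I expect to be the main obstacle is precisely that last identification: closing the gap between ``$r_\eta$ crosses or is asymptotic to $W_m$ for every $m$'' and ``$\Itin(r_\eta) = \{B_n\}$''. The danger is an $\eta$ whose itinerary shares only an initial segment of $\{B_n\}$ before branching off in $\T$, or a rational $\eta$ asymptotic to infinitely many of the blocks $B_m$ without its ray actually entering them past some point (the pathology flagged in the remark after Lemma \ref{Block int}). Ruling out such competing limits will likely require refining the interpolating paths $\gamma_n$ inside $\bnd B_{n+1}$, for instance arranging $\gamma_n$ to avoid $\bigcup_{j < n} \bnd B_j$ away from its endpoints so that the itinerary of each $r_{\gamma_n(t)}$ passes through $B_1, \ldots, B_{n+1}$, together with the full strength of the lonely hypothesis applied to any irrational limit $\eta$.
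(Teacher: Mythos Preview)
Your strategy matches the paper's: concatenate paths through the $\bnd B_n$'s, take an arbitrary subsequential limit $\alpha'$, and force $\Itin(\alpha')=\Itin(\alpha)$ via Lemma~\ref{Croke Kleiner Converging Rays} and loneliness. The paper applies that lemma with $Y=B_m$ rather than $Y=W_m$, but this is inessential; what you are missing is the maneuver that closes exactly the gap you flag at the end.

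The paper does \emph{not} refine the interpolating paths. Instead it bootstraps. With $r_n\to r$ the rays from $x_0$ and $r(\infty)=\alpha'$, fix $m$: either $r\cap B_m\neq\emptyset$, giving $B_m\in\Itin(\alpha')$, or else $\alpha'\in\bnd B_m$. In the second alternative, note that it must persist for every $M>m$: if $r$ met $B_M$ it would already have crossed the separating block $B_m$ on the way from $x_0$. Now, since $\alpha'\in\bnd B_M$ and $r_n\to r$, one can choose $x_n(M)\in r_n\cap B_M$ with $d(x_0,x_n(M))\to\infty$; the initial segments $[x_0,x_n(M)]\subset r_n$ converge to $r$, and each passes through every $B_i$ with $i<M$ (as $B_i$ separates $x_0$ from $B_M$), forcing $B_i\in\Itin(\alpha')$. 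Letting $M\to\infty$ recovers $B_m\in\Itin(\alpha')$ in this case as well. Thus $\Itin(\alpha')=\{B_n\}$ in either alternative and loneliness gives $\alpha'=\alpha$. Your worry about a rational $\eta$ asymptotic to infinitely many blocks is absorbed by this argument, which forces the itinerary to be infinite; no avoidance of $\bigcup_{j<n}\bnd B_j$ along the paths $\gamma_n$ is needed.
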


\begin{proof}
	Fix $x_0\in X\setminus \W$ and let $\alpha \in \lonely X$ with $\Itin(\alpha) = \{B_1, B_2, ...\}$. Let $c\colon [0,\infty)$ be a path in $\partial_R X$ such that $c([n-1,n])\subset B_n$ for all $n\in \N$. Our goal is to extend $c$ to $[0,\infty]$ such that $c(\infty)=\alpha$. 
	
	Let $\{t_n\}\subset [0,\infty)$ be a monotone sequence tending to infinity.

	Since $\bnd X$ is compact then, by passing to a subsequence if necessary, $\{c(t_n)\}$ converges to some $\alpha'$. It suffices to show that $\alpha = \alpha '$, which we will do by showing $\Itin(\alpha) = \Itin(\alpha')$.
	
	For each $n$, $\Itin(c(t_n))=\{B_1, B_2, ..., B_{k_n}\}$ where $k_n = \lceil t_n \rceil$. Let $r_n$ be the ray based at $x_0$ representing $c(t_n)$ and let $r$ be the ray based at $x_0$ representing $\alpha'$. By our construction, $r_n\to r$.
	
	Fix a block $B_m$ in $\Itin(\alpha)$. There is an $N$ such that for all $n>N$, $B_m\in \Itin(c(t_n))$ and so $r_n\cap B_m \neq \emptyset$. By Lemma \ref{Croke Kleiner Converging Rays}, either $r\cap B_m\neq \emptyset$ or $r\in \bnd B_m$. If the former, then $B_m\in \Itin(\alpha')$. If the latter is true, pick points $x_n(m)\in r_n \cap B_m$ such that $d(x_0, x_n(m))\to \infty$. Such a sequence exists since $r\in \bnd B_m$ and $r_n\to r$. Then, after possibly passing to a subsequence, the geodesic segments from $x_0$ to $x_n(m)$ converge to a ray $\rho_m$ with $\rho_m(\infty) = \alpha'$. For each $i<m$, $B_i$ separates $B_m$ from $x_0$, so $B_i\in \Itin(\alpha')$.
	
	But if this is true for some $m$, then it is true for all $M>m$ and so $\alpha \in \bnd B_M$ for all $M>m$. Consider the ray $\rho_M$ by the same construction as the previous paragraph. Using the same argument, $B_i\in \Itin(\rho_M(\infty)) = \Itin(\alpha')$ for all $i<M$. Since $m<M$, this means $B_m\in \Itin(\alpha')$. Therefore $\alpha'$ and $\alpha$ have the same itinerary. Since $\alpha$ is lonely, $\alpha = \alpha'$.
\end{proof}

\begin{remark}
	Proving path connectivity of the boundary does not require each block to have a path connected boundary. The proposition only assumes that $\partial_R X$ is path connected. This distinction will come up later when we show all 1-ended $\cat(0)$ groups with isolated flats have path connected visual boundaries.
\end{remark}

\begin{remark}
	While the lonely condition is sufficient, it is not necessary for path connectivity. This comes up for the following group
	\[G=F_2\times \Z = \Z^2\ast_\Z \Z^2\]
	$G$ acts the universal cover of two tori with a curve on each identified. The block decomposition comes from viewing $G$ as the amalgamated product of two $\Z^2$s over a $\Z$. The boundary is path connected since it is a suspension of a Cantor set, but irrational rays are not lonely.
\end{remark}

\section{Fixing a Space}\label{section:fixing a space}

Normally when dealing with $\cat(0)$ groups and their boundaries, one needs to be particularly careful about the space they are working with. This is because Croke and Kleiner \cite{CK00} showed  that, unlike in the hyperbolic setting, $\cat(0)$ groups do not have a well defined visual boundary. When it comes to $\cat(0)$ groups with isolated flats, however, work of Hruska and Kleiner \cite{HK05} shows that the visual boundary of these groups is well defined . In this section, we will fix the space on which a 1-ended $\cat(0)$ group with isolated flats will act on and define the block decomposition for this space.

Throughout this section, let $G$ be a $\cat(0)$ group which is hyperbolic relative to $\bP$, the collection of flat-stabilizers so that $(G,\bP)$ is the relatively hyperbolic pair. Let $G$ act geometrically on some $\cat(0)$ space $X$. 

We want more control over the space on which $G$ acts, so we use the peripheral splitting theorem of Bowditch introduced in Section \ref{rel hyp section}, a convex splitting theorem of Hruska and Ruane \cite{HR17}, and a combination theorem of Bridson and Haefliger \cite{BH99} to fix a different space which admits a geometric group action by $G$. We quote the two remaining theorems below. But first, a definition.

\begin{defn}
	Let $G$ be a CAT(0) group acting geometrically on a $\cat(0)$ space $X$. A subgroup $H\leq G$ is \textit{convex} if $H$ stabilizes a closed, convex subspace $Y$ of $X$, and $H$ acts cocompactly on $Y$. Therefore $H$ act geometrically on $Y$, a $\cat(0)$ space.
\end{defn}

\begin{thm}[Convex Splitting Theorem, {\cite[1.3]{HR17}}]
	Let $G$ act geometrically on a $\cat(0)$ space $X$. Suppose $G$ splits as the fundamental group of a graph of groups $\G$ such that each edge group of $\G$ is convex. Then each vertex group is convex as well. In particular, each vertex group is a $\cat(0)$ group.
\end{thm}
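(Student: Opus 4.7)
The plan is to work via the Bass-Serre tree $T$ of the splitting $\G$ and to transport the convex structure from edges up to vertices. First, lift the data equivariantly: since $G$ acts on $T$ with edge stabilizers conjugate to the hypothesized convex edge groups, for each edge $\tilde{e}$ of $T$ one may choose (equivariantly in $\tilde{e}$) a closed convex subspace $Y_{\tilde{e}} \subset X$ on which the edge stabilizer $G_{\tilde{e}}$ acts cocompactly. Fix a vertex $\tilde{v} \in T$ with stabilizer $G_{\tilde{v}}$; the goal is to produce a closed convex $G_{\tilde{v}}$-invariant $Y_{\tilde{v}} \subset X$ on which $G_{\tilde{v}}$ acts cocompactly.

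The natural candidate is $Z_{\tilde{v}} := \bigcup_{\tilde{e} \ni \tilde{v}} Y_{\tilde{e}}$ together with its closed convex hull $Y_{\tilde{v}}$ in $X$. Since the quotient graph $\G$ is finite, the edges incident to $\tilde{v}$ fall into finitely many $G_{\tilde{v}}$-orbits; for each orbit representative $\tilde{e}_i$ the stabilizer $G_{\tilde{e}_i}$ acts cocompactly on $Y_{\tilde{e}_i}$, and therefore $G_{\tilde{v}}$ acts cocompactly on the full orbit $G_{\tilde{v}}\cdot Y_{\tilde{e}_i}$ and hence on the finite union $Z_{\tilde{v}}$. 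The set $Y_{\tilde{v}}$ is closed, convex, and $G_{\tilde{v}}$-invariant by construction (any $g \in G_{\tilde{v}}$ permutes the edges at $\tilde{v}$ and the assignment $\tilde{e} \mapsto Y_{\tilde{e}}$ is equivariant).

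The main obstacle is promoting cocompactness from $Z_{\tilde{v}}$ to its closed convex hull $Y_{\tilde{v}}$: in a general $\cat(0)$ space the closed convex hull of a cocompact set need not lie in any uniformly bounded neighborhood. To control this I would exploit the full geometric $G$-action on $X$. Fix a compact fundamental domain $K$ for $G$ on $X$, and aim to show that every $p \in Y_{\tilde{v}}$ is a $G_{\tilde{v}}$-translate of a point in a uniformly bounded neighborhood of $Z_{\tilde{v}}$. The key geometric input is the Bass-Serre tree projection: a point of $Y_{\tilde{v}}$ lies on an (iterated) geodesic between points of $Z_{\tilde{v}}$, and any sufficiently long such geodesic must pass near some $Y_{\tilde{e}}$ incident to $\tilde{v}$, because $G$-cocompactness and the edge-cocompactness hypotheses mean that long excursions of $X$ parallel to $\tilde{v}$ can be folded back onto $Z_{\tilde{v}}$ with bounded displacement. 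A uniform folding bound of this kind is what would force $Y_{\tilde{v}} \subset N_R(Z_{\tilde{v}})$ for some $R$, and this uniformity step, rather than any abstract convexity manipulation, is what I expect to be the substantive work.

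Once $G_{\tilde{v}}$-cocompactness on $Y_{\tilde{v}}$ is established, the rest is immediate: properness of the $G_{\tilde{v}}$-action on $Y_{\tilde{v}}$ is inherited from the proper $G$-action on $X$, so $G_{\tilde{v}}$ acts geometrically on the $\cat(0)$ subspace $Y_{\tilde{v}}$, exhibiting $G_{\tilde{v}}$ as a convex subgroup and, in particular, as a $\cat(0)$ group.
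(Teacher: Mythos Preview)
The paper does not contain a proof of this statement: it is quoted verbatim as the Convex Splitting Theorem of Hruska--Ruane, with the citation \cite[1.3]{HR17}, and is used as a black box in Section~\ref{section:fixing a space}. There is therefore nothing in the present paper to compare your argument against.

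On its own merits, your proposal identifies the right difficulty but does not resolve it. The passage from cocompactness on $Z_{\tilde v}$ to cocompactness on its closed convex hull $Y_{\tilde v}$ is exactly the content of the theorem, and your sketch (``long excursions \ldots\ can be folded back onto $Z_{\tilde v}$ with bounded displacement'') is not an argument. In a general $\cat(0)$ space there is no a priori reason the closed convex hull of a $G_{\tilde v}$-cocompact set stays in a bounded neighborhood of that set; one needs a mechanism that uses the splitting. The actual proof in \cite{HR17} does not proceed by taking convex hulls of unions of edge spaces. Instead it constructs, for each vertex $\tilde v$, a convex subspace directly by intersecting halfspaces: each convex edge space $Y_{\tilde e}$ separates $X$ coarsely according to the separation in the Bass--Serre tree, and the vertex space is obtained as (a tubular neighborhood of) the intersection of the correct sides. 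Cocompactness then comes from the $G$-cocompactness of $X$ together with the fact that only finitely many $G_{\tilde v}$-orbits of edge spaces bound this region. Your convex-hull approach would require an additional uniform-neighborhood lemma that you have not supplied and that is false without further input from the tree structure.
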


Let $\G$ be the maximal peripheral splitting for our group $G$. Since $G$ has isolated flats, all peripheral subgroups in the graph of groups $\G$ are virtually abelian and therefore so are all edge groups. By the Flat Torus Theorem \cite{BH99}, all virtually abelian subgroup of a $\cat(0)$ group are convex and therefore we can apply the Convex Splitting Theorem to the maximal peripheral splitting of $G$.

Let $\C$ be the collection of component vertices and $\P$ the set of peripheral vertices of $\G$. For each $v\in \C$ with vertex group $G_v$, let $X_v$ be the convex subspace of $X$ on which $G_v$ acts geometrically. For $w\in \P$, the vertex group $G_w$ is virtually abelian and therefore acts geometrically on some Euclidean space $F_w$. 

Each edge $e$ of $\G$ is incident to a peripheral vertex $w$ and a component vertex $v$. Let $F_e$ be the flat subspace of $F_w$ on which $P_e$ acts geometrically. Each edge also comes equipped with a monomorphism $\phi_e\colon P_e\to G_v$. The Flat Torus Theorem gives a $\phi_e$-equivariant isometric embedding of $F_e$ into $X_v$. 

With all of this set up, we can use the following theorem to build our desired $X$:

\begin{thm}[Equivariant Gluing Theorem, {\cite[II.11.18]{BH99}}]\label{thm:equivariant gluing}
	Let $\Gamma_0$, $\Gamma_1$, and $H$ be groups acting geometrically on complete CAT(0) spaces $X_0, X_1$, and $Y$, respectively. Suppose for $j=0,1$, there exists monomorphisms $\phi_j\colon H \hookrightarrow X_j$ and $\phi_j$-equivariant isometric embeddings $f_j\colon  Y\to X_j$, then $\Gamma=\Gamma_0\ast_H \Gamma_1$ acts geometrically on a CAT(0) space $X$.
\end{thm}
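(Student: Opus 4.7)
The plan is to construct $X$ as a tree of CAT(0) spaces modeled on the Bass-Serre tree of the amalgam $\Gamma = \Gamma_0 \ast_H \Gamma_1$, verify it is CAT(0) via iterated application of Reshetnyak's gluing theorem, and then show the natural $\Gamma$-action coming from the Bass-Serre structure is geometric.

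First, the construction. Let $T$ be the bipartite Bass-Serre tree with vertex set $V_0 \sqcup V_1$ (cosets of $\Gamma_0$ and $\Gamma_1$) and edges corresponding to cosets of $H$. To each vertex $v$ of type $j$ attach a copy $X_v$ of $X_j$, and to each edge $e$ a copy $Y_e$ of $Y$. For each edge $e$ with endpoints $v_0$ and $v_1$, the $\phi_j$-equivariant isometric embeddings $f_j$, twisted by appropriately chosen coset representatives, furnish isometric embeddings of $Y_e$ as closed convex subspaces of $X_{v_0}$ and $X_{v_1}$. Let $X$ be the quotient $\bigsqcup_v X_v / \sim$ under these edge-wise identifications, equipped with the induced length metric.

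Second, I would verify that $X$ is complete CAT(0) by iterated application of Reshetnyak's gluing theorem (Bridson-Haefliger II.11.1): gluing two complete CAT(0) spaces along isometric closed convex subspaces yields a complete CAT(0) space. Each individual edge-gluing is of this form, so produces a CAT(0) space. Since any geodesic triangle in $X$ lies in the union of finitely many adjacent vertex spaces, a finite induction over the corresponding finite subtree of $T$ shows the CAT(0) inequality holds on this finite subspace, and hence globally. Completeness of $X$ follows from completeness of the pieces together with the observation that any Cauchy sequence is eventually confined to a single vertex space.

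Third, I would define the $\Gamma$-action. The standard Bass-Serre action of $\Gamma$ on $T$ lifts to $X$ vertex by vertex: given $\gamma \in \Gamma$ with $\gamma v = w$, the induced isometry $X_v \to X_w$ is given by the element of the appropriate vertex group $\Gamma_j$ realizing this coset translation, acting on the canonical copy of $X_j$. The $\phi_j$-equivariance of the embeddings $f_j$ guarantees these vertex-isometries are compatible along each edge's gluing, yielding a well-defined isometric $\Gamma$-action on $X$. Cocompactness follows from the finiteness of $\Gamma$-orbits of vertices and edges of $T$ together with the cocompactness of the vertex group actions; properness follows because any compact $K \subset X$ meets only finitely many vertex spaces and within each piece properness reduces to properness of $\Gamma_j \to \operatorname{Isom}(X_j)$.

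The main obstacle I anticipate is the careful bookkeeping needed to make the gluings and the $\Gamma$-action simultaneously consistent across every edge of $T$: one must choose coset representatives, conjugate the embeddings $f_j$ accordingly, and verify that the resulting identifications are both well-defined independent of these choices and compatible with the would-be $\Gamma$-action. Once this bookkeeping is in place, the CAT(0) property and the geometricness of the action follow essentially formally from Reshetnyak's theorem and the geometric actions on the pieces.
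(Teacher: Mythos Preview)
The paper does not prove this theorem; it is quoted from Bridson--Haefliger as a black box, and the paragraph following the statement merely describes the resulting space in the specific setting of the paper's maximal peripheral splitting (vertex spaces $X_v$ and $F_w$, edge strips $F_e\times[0,1]$, glued along the Bass--Serre tree). So there is no proof in the paper to compare against.

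Your sketch is essentially the Bridson--Haefliger argument. One point worth noting: you glue the vertex spaces directly along the images $f_j(Y)$, whereas Bridson--Haefliger (and the paper's description of the construction) insert edge strips $Y_e\times[0,1]$ between adjacent vertex spaces. The strip version is what makes several of your later claims clean: with strips, distinct vertex spaces are at uniform positive distance unless adjacent, which immediately gives that a Cauchy sequence is eventually trapped in a single vertex-plus-adjacent-strips region, and also gives a genuine tree-of-spaces projection $X\to T$. Without strips, your completeness and properness arguments need more care (e.g.\ edge-space images inside a single $X_v$ could accumulate, and non-adjacent vertex spaces need not be at positive distance). None of this is fatal---Reshetnyak still applies to direct gluings along convex subspaces---but the strip construction is the standard route for exactly these bookkeeping reasons, and it is the version the paper uses.
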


The space uses the Bass-Serre tree $\T$ for the splitting $\G$ of $G$. For each component vertex $v$ of $\T$, take a copy of $X_v$ and for each peripheral vertex $w$ of $\T$, take a copy of $P_w$. We shall refer to these spaces as \textit{vertex spaces}. Then for each edge $e$ incident to $v$ and $w$, take a copy of $F_e\times [0,1]$, which we shall refer to as \textit{edge spaces}. Identify the $F_e\times \{0\}$ part with the image of $F_e$ in $X_v$ and $F_e\times \{1\}$ is identified with the image of $F_e$ in $F_w$. Following the Bass-Serre tree, all the copies of the different vertex and edge spaces are glued up to form $X$. 

With this $X$ fixed, we can also describe the block decomposition of $X$. For each vertex space $X_v$ or $P_w$, take the closed $\frac{1}{2}$-tubular neighborhood to be a block $B$. Then each block intersects in $F_e\times \{\frac{1}{2}\}$. This defines a block decomposition of $X$.

\begin{remark}
	Throughout the remaining sections, when referring to a $\cat(0)$ group with isolated flats acting geometrically on $X$, we shall be referring to the $X$ from this construction. 
\end{remark}

Our goal for the next few sections is to show that $\bnd X$ is path connected. In Lemma \ref{IFP blocks are lonely}, we will show that with this block decomposition, all irrational rays are lonely. Therefore we just need to show that $\partial_R X$ is path connected.

In general, each component vertex is not 1-ended. When this happens, $\bnd X_v$ is cannot be path connected and therefore showing $\partial_R X$ is path connected requires more work. Therefore we introduce $Y_v$:

\begin{defn}\label{defn Y}
	For each component vertex $v$, let $Star(v)$ be the star of $v$ in $\T$. There is a continuous map $p\colon X\to \T$ which maps vertex spaces to vertices and edge spaces to edges in the obvious way. Let $Y_v= p \inverse ( Star(v))$.
	
	 Put another way, let $\mathcal{E}_v$ be the collection of edges incident to $v$ and $\P_v$ be the collection of vertices of $\T$ adjacent to $v$. Since $\T$ is the Bass-Serre tree for the maximal peripheral splitting, each $w\in \P_v$ is a peripheral vertex. Then $Y_v$ is
	
	\[Y_v := X_v \cup \left(\bigcup_{e\in \mathcal{E}_v} F_e\times[0,1]\right) \cup \left(\bigcup_{w\in \P_w} F_w \right) \]
	Put a third way, $Y_v$ consists of $X_v$ and every flat of $X$ which shares a boundary point with $Y_v$.
	
	Ultimately, we want to show that $\bnd Y_v$ is path connected for each $v\in \C$. When dealing with a generic $Y_v$, we will drop the subscript and denote it $Y$.
\end{defn}

The motivating example for needing to this is Example \ref{example: punctured tori}. The maximal peripheral splitting of this group is in Figure \ref{fig: max periph splitting G2}. Since the component vertices are free groups, they have Cantor sets for boundaries. Additionally, there is no obvious intermediate peripheral splitting which has component vertices with path connected visual boundaries. Therefore, one needs to look at one of the Cantor sets along with all the circles attached to it. We will show this space is path connected.

\section{Decomposition Spaces}\label{section:decomp}
In this section we will repeat some results from \cite{HR17} about decompositions of spaces and their relationship to CAT(0) groups with isolated flats. In particular, we establish that the map $\pi\colon \bnd X \to \bnd (G,\bP)$ from Theorem \ref{Tran Theorem} can be viewed through the lens of decomposition theory. We then use this to establish a decomposition of $\bnd Y$ and ultimately conclude that boundary points in $Y$ which are not in the boundary of a flat are locally connected. 

Much of the results here follow from elementary decomposition theory, which can be found in \cite{Dav86}. For a topological space $M$, a \textit{decomposition}, $\D$, is a partition of $M$. The decomposition map $\pi\colon M\to M/\D$ is the quotient map where each $d\in \D$ is collapsed to a point and the resulting space is given the quotient topology. In this way, decompositions of $M$ are equivalent to quotients of $M$. Any decomposition can be made into a quotient map and any quotient map can be made into a decomposition by taking point pre-images as elements of the decomposition.

\begin{defn}[Upper semicontinuous decomposition]
	A decomposition $\D$ of $M$ is \textit{upper semicontinuous} if each $d\in \D$ is compact and for each $d\in \D$ and each open $U\subset M$ containing $d$, there is an open $V\subset M$ containing $d$ such that every $d\in \D$ which intersects $V$ is contained in $U$. A quotient is \textit{upper semicontinuous} if the associated decomposition is upper semicontinuous.
\end{defn}

\begin{prop}[{\cite[I.1.1]{Dav86}}]
	Let $\D$ be a decomposition of a space $M$ with each $d\in \D$ compact. The following are equivalent:
	\begin{enumerate}
		\item $\D$ is upper semicontinuous.
		\item For an open set $U\subset M$, let $U^*$ be the union of $d\in \D$ such that $d\subset U$. Then $U^*$ is open.
		\item The decomposition map $\pi\colon  M\to M/\D$ is closed.
	\end{enumerate}
	
\end{prop}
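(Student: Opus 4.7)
The plan is to prove the proposition via the cyclic chain $(1)\Rightarrow (2)\Rightarrow (3)\Rightarrow (1)$. The key bookkeeping device is the notion of a \emph{saturated} subset: $S \subset M$ is saturated with respect to $\D$ if $S = \pi^{-1}(\pi(S))$, equivalently if $S$ is a union of $\D$-elements. Two observations drive every step. First, for any $U \subset M$, the set $U^*$ in the statement is precisely the largest saturated subset of $U$. Second, for every $C \subset M$ one has $\pi^{-1}(M/\D \setminus \pi(C)) = (M \setminus C)^*$, since this preimage consists of exactly the points whose $\D$-class is disjoint from $C$.

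For $(1)\Rightarrow (2)$, I would take an open $U$ and a point $x \in U^*$, let $d \in \D$ be the unique element containing $x$ (so $d \subset U$), and apply upper semicontinuity to produce an open $V \supset d$ such that every $d' \in \D$ meeting $V$ lies in $U$. Every point of $V$ belongs to some such $d' \subset U$, so $V \subset U^*$, showing $U^*$ is open. For $(2)\Rightarrow (3)$, if $C \subset M$ is closed, the identity above gives $\pi^{-1}(M/\D \setminus \pi(C)) = (M \setminus C)^*$, which is open by (2); by the definition of the quotient topology on $M/\D$, this forces $\pi(C)$ to be closed.

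For $(3)\Rightarrow (1)$, given $d \in \D$ sitting inside an open $U$, I would set $C := M \setminus U$, a closed set disjoint from $d$. By (3) the image $\pi(C)$ is closed, and $\pi(d)$ lies in its complement. Setting $V := \pi^{-1}(M/\D \setminus \pi(C))$ then produces an open saturated set containing $d$ and disjoint from $C$, hence contained in $U$. Saturation of $V$ ensures that every $d' \in \D$ meeting $V$ is fully contained in $V$, and therefore in $U$ — exactly the upper semicontinuity condition.

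I do not anticipate any real obstacle: each implication is a short manipulation once the dictionary ``saturated subsets of $M$ correspond to preimages of subsets of $M/\D$'' is in place. The only mild point of care is that a generic open $U \subset M$ need not be saturated, which is precisely why condition (2) is non-trivial in the first place; the compactness hypothesis on elements of $\D$ does not enter the equivalences directly, since it is already packaged into the definition of upper semicontinuity.
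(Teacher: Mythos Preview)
Your proposal is correct and follows essentially the same cyclic route $(1)\Rightarrow(2)\Rightarrow(3)\Rightarrow(1)$ as the paper's proof, with the same choices of $V$ in $(1)\Rightarrow(2)$ and of $V=\pi^{-1}(M/\D\setminus\pi(C))$ in $(3)\Rightarrow(1)$. Your framing via saturated sets and the identity $\pi^{-1}(M/\D\setminus\pi(C))=(M\setminus C)^*$ packages the bookkeeping a bit more cleanly, but the substance is identical.
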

\begin{prop}[{\cite[I.2.1 and I.2.2]{Dav86}}]
	If $\D$ is an upper semicontinuous decomposition of a Hausdorff space, then $M/\D$ is Hausdorff. If $\D$ is an upper semicontinuous decomposition of a metric space, then $M/\D$ is metrizable.
\end{prop}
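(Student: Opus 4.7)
The plan for the Hausdorff half is a direct saturation argument built on the previous proposition. Given two distinct points $\pi(d_1), \pi(d_2) \in M/\D$, the elements $d_1, d_2 \in \D$ are disjoint and each is compact. Since $M$ is Hausdorff, disjoint compact subsets can be separated by disjoint open neighborhoods $U_1 \supset d_1$ and $U_2 \supset d_2$. The previous proposition then ensures that the saturations $U_i^*$ are open, and by construction they are saturated with $d_i \subset U_i^* \subset U_i$, so $U_1^* \cap U_2^* = \emptyset$. The images $\pi(U_i^*)$ are therefore disjoint open neighborhoods of $\pi(d_1)$ and $\pi(d_2)$ in $M/\D$.

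For the metrizable half I would first promote the same saturation argument to a regularity argument, and then appeal to a metrization theorem. To establish regularity, suppose $\pi(d) \notin C$ for some closed $C \subset M/\D$; then $\pi^{-1}(C)$ is closed and saturated in $M$. Because a metric space is normal, there exist disjoint open sets $U \supset d$ and $V \supset \pi^{-1}(C)$. Since $\pi^{-1}(C)$ is saturated, every decomposition element contained in it already lies in $V$, and hence $\pi^{-1}(C) \subset V^*$, while $d \subset U^*$. Pushing down to the quotient separates $\pi(d)$ from $C$. Combined with the Hausdorff conclusion, this gives that $M/\D$ is $T_3$.

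To conclude metrizability, my plan would be to invoke the Nagata--Smirnov theorem. Starting from a $\sigma$-locally finite basis $\{V_n\}$ of the metric space $M$, I would form the collection of saturations of finite unions $(V_{n_1} \cup \cdots \cup V_{n_k})^*$. Compactness of each $d \in \D$ ensures that for any saturated open $U \supset d$, one can cover $d$ by finitely many basis elements lying in $U$, so the resulting saturated finite unions constitute a basis for the saturated opens in $M$ and hence a basis on the quotient $M/\D$. I expect the main obstacle to be verifying that the $\sigma$-locally-finite property actually survives the passage through saturation; this is where I would rely on the closed-map characterization of upper semicontinuity from the previous proposition, which forces images of locally finite families to remain locally finite in $M/\D$. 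An alternative route I would have in reserve is to construct a compatible metric on $M/\D$ directly via the chain infimum $\hat{d}(d, d') = \inf \sum_{i=1}^n \mathrm{dist}_M(e_{i-1}, e_i)$ over finite chains $d = e_0, \ldots, e_n = d'$ in $\D$, and then verify, using the Hausdorff conclusion for positivity and upper semicontinuity for agreement with the quotient topology, that $\hat{d}$ induces the desired metric structure.
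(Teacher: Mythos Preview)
The paper does not prove this proposition at all; it is quoted from Daverman \cite[I.2.1 and I.2.2]{Dav86} without argument. So there is nothing to compare against on the paper's side.

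Your Hausdorff argument is correct and is the standard one: separate the two compact decomposition elements by disjoint opens in $M$, pass to the saturations $U_i^*$ (open by the previous proposition), and push down.

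Your metrizability plan, however, has a genuine gap in the Nagata--Smirnov route. You propose taking a $\sigma$-locally finite base $\{V_n\}$ for $M$ and forming the saturations of \emph{finite unions} $(V_{n_1}\cup\cdots\cup V_{n_k})^*$. The reason you need finite unions is correct: a single $V_n^*$ need not contain a given $d\in\D$, so these do not form a base without allowing unions. But once you allow all finite unions, the resulting family is no longer $\sigma$-locally finite---there are uncountably many finite subfamilies, and a fixed point of $M/\D$ will lie in infinitely many of the corresponding images. Your appeal to the closed-map characterisation does show that a perfect map sends a locally finite family to a locally finite family, but that only helps for the original $\{V_n\}$, not for the enlarged family of finite unions, and the original family does not give a base downstairs. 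The chain-infimum fallback is also problematic: positivity of $\hat d$ does not follow from the Hausdorff conclusion alone (one can have chains whose total gap shrinks to zero without the endpoints coinciding), and verifying that $\hat d$ induces the quotient topology is essentially as hard as the theorem itself.

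The clean argument, and the one Daverman uses, is to invoke the Hanai--Morita--Stone theorem: a closed continuous surjection from a metric space has metrizable image if and only if every fibre has compact boundary. Here $\pi$ is closed (previous proposition) and each fibre $d\in\D$ is compact, so the hypothesis is satisfied and $M/\D$ is metrizable. If you are content with the separable case (which is all the paper needs, since CAT(0) boundaries of proper spaces are compact metrizable), the even shorter route is Urysohn: your regularity argument is fine, and second countability passes to quotients, so $M/\D$ is regular and second countable, hence metrizable.
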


Boundaries of proper CAT(0) spaces are metrizable, and therefore so are their quotients coming from upper semicontinuous decompositions.

\begin{prop}[{\cite[I.3.1]{Dav86}}]
	If $\D$ is an upper semicontinuous decomposition of $M$, then $\pi\colon M\to M/\D$ is a proper map.
\end{prop}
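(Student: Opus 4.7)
The plan is to combine two ingredients already available from the preceding propositions: (a) an upper semicontinuous decomposition has compact decomposition elements by definition, so each fiber of $\pi$ is compact, and (b) by the previous proposition, $\pi \colon M \to M/\D$ is a closed map. The goal then is to prove the standard fact that a continuous closed surjection with compact fibers is proper, i.e.\ that $\pi\inverse(C)$ is compact whenever $C \subset M/\D$ is compact.

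First I would fix a compact $C \subset M/\D$ and an arbitrary open cover $\{U_\alpha\}_{\alpha \in A}$ of $\pi\inverse(C)$ in $M$. For each $q \in C$, the fiber $\pi\inverse(q)$ is a single decomposition element, hence compact by hypothesis, so some finite subcollection $U_{\alpha_1^q}, \ldots, U_{\alpha_{n_q}^q}$ covers $\pi\inverse(q)$. Let $U_q = \bigcup_{i=1}^{n_q} U_{\alpha_i^q}$; this is an open set in $M$ containing $\pi\inverse(q)$.

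Now comes the tube-lemma-style step, which is the heart of the argument. Since $M \setminus U_q$ is closed in $M$ and $\pi$ is closed, the image $\pi(M \setminus U_q)$ is closed in $M/\D$. Define $V_q := (M/\D) \setminus \pi(M \setminus U_q)$. Then $V_q$ is open in $M/\D$; it contains $q$ (because $\pi\inverse(q) \subset U_q$ forces $q \notin \pi(M \setminus U_q)$); and it satisfies $\pi\inverse(V_q) \subset U_q$ by construction.

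To finish, I use compactness of $C$ to extract a finite subcover $V_{q_1}, \ldots, V_{q_k}$. Then
\[
\pi\inverse(C) \;\subset\; \pi\inverse\!\left(\bigcup_{j=1}^k V_{q_j}\right) \;=\; \bigcup_{j=1}^k \pi\inverse(V_{q_j}) \;\subset\; \bigcup_{j=1}^k U_{q_j},
\]
which is the union of finitely many of the original $U_\alpha$'s. This exhibits a finite subcover of $\pi\inverse(C)$, proving it is compact. The main (and only) obstacle is the tube-lemma trick of passing from an open neighborhood $U_q$ of a compact fiber upstairs to an open neighborhood $V_q$ of the point $q$ downstairs; this is precisely where the closedness of $\pi$ (hence the upper semicontinuity hypothesis) is used, and everything else is routine.
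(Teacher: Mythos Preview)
Your proof is correct. Note, however, that the paper does not supply its own proof of this proposition: it is simply quoted from Daverman \cite[I.3.1]{Dav86} as background, with no argument given. So there is nothing in the paper to compare your approach against.

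For what it is worth, your argument is the standard one: an upper semicontinuous decomposition has compact elements by definition, the previous proposition gives that $\pi$ is closed, and you then prove the general topological fact that a closed continuous surjection with compact fibers is proper via the tube-lemma construction $V_q = (M/\D)\setminus \pi(M\setminus U_q)$. Every step is sound, and this is essentially how Daverman proves it as well.
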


\begin{defn}[Monotone decomposition]
	A decomposition $\D$ of $M$ is \textit{monotone} if each $d\in \D$ is compact and connected.
\end{defn}

\begin{prop} [{\cite[I.4.1]{Dav86}}] \label{Montone if and only if connected}
	Let $\D$ be an upper semicontinuous decomposition of a space $M$. Then $\D$ is monotone if and only if $\pi \inverse(C)$ is connected whenever $C$ is a connected subset of $M/\D$.
\end{prop}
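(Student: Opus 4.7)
The plan is to prove both directions of the biconditional using the characterizations of upper semicontinuous decompositions already recorded in the excerpt. The reverse direction is immediate: if every connected $C\subset M/\D$ has $\pi\inverse(C)$ connected, then in particular every singleton $\{\pi(d)\}\subset M/\D$ is connected, so $d=\pi\inverse(\{\pi(d)\})$ is connected. Since $\D$ is already assumed upper semicontinuous (so each $d$ is compact), this shows $\D$ is monotone.

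For the forward direction I would argue by contradiction. Assume $\D$ is monotone, let $C\subset M/\D$ be connected, and suppose $\pi\inverse(C)=A\sqcup B$ with $A,B$ disjoint, non-empty, and separated in $\pi\inverse(C)$. The key observation is that each $d\in \D$ with $d\subset \pi\inverse(C)$ is connected by monotonicity, so $d$ lies entirely in $A$ or entirely in $B$. Consequently $A$ and $B$ are $\pi$-saturated subsets of $\pi\inverse(C)$, and $\{\pi(A),\pi(B)\}$ partitions $C$ into two non-empty pieces.

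The heart of the argument is to upgrade this set-theoretic partition to an honest separation of $C$. Write $A=U\cap \pi\inverse(C)$ for some open $U\subset M$. Using characterization (2) of upper semicontinuity from the previous proposition, the saturation $U^*:=\bigcup\{d\in \D: d\subset U\}$ is open in $M$. One checks $U^*\cap \pi\inverse(C)=A$: if $d\subset A$ then $d\subset U$, so $d\subset U^*$; conversely, any $d\subset U^*\cap \pi\inverse(C)$ satisfies $d\subset U\cap \pi\inverse(C)=A$, using that $A$ is saturated. Since $U^*$ is itself saturated and open, $\pi\inverse(\pi(U^*))=U^*$ is open, so $\pi(U^*)$ is open in $M/\D$, and hence $\pi(A)=\pi(U^*)\cap C$ is open in $C$. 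By symmetry $\pi(B)$ is open in $C$, contradicting connectedness of $C$.

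The main obstacle, and the only step that genuinely requires upper semicontinuity, is producing the saturated open set $U^*$ from the open set $U$; everything else is bookkeeping about saturated sets under the quotient map and a standard manipulation of separations. No hypothesis on $M$ beyond what makes the previous proposition's characterizations available is needed.
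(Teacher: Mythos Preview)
Your proof is correct and follows essentially the same strategy as the paper's argument: argue by contradiction, use monotonicity to see that each decomposition element lies entirely on one side of the separation, then push the separation down to $C$ using upper semicontinuity. The only difference is cosmetic: the paper invokes characterization (3) of the preceding proposition (that $\pi$ is a closed map) and pushes forward closed sets, while you invoke characterization (2) (that $U^*$ is open) and push forward saturated open sets.
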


\begin{prop}[{\cite[8.5]{HR17}}] \label{Singleton points are locally connected}
	Let $\D$ be an upper semicontinuous monotone decomposition of $M$. Let $\{x\}$ be a singleton member of $\D$. If $M/\D$ is locally connected at $\pi(x)$, then $M$ is locally connected at $x$.
\end{prop}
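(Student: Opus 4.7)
The plan is to use the decomposition map $\pi\colon M \to M/\D$ to transport local connectedness from $\pi(x)$ back to $x$. The key features I will exploit are that $\{x\}$ is a singleton fiber (so $\pi^{-1}(\pi(x)) = \{x\}$), that upper semicontinuity converts open neighborhoods of $x$ into \emph{saturated} open neighborhoods, and that Proposition \ref{Montone if and only if connected} allows me to lift connected sets of $M/\D$ to connected sets of $M$ through $\pi^{-1}$.

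First I would fix an arbitrary open neighborhood $U$ of $x$ in $M$ and aim to produce a connected open neighborhood of $x$ contained in $U$. Applying upper semicontinuity to the element $\{x\} \in \D$ and the open set $U$, I obtain an open $V$ with $x \in V \subset U$ such that every $d \in \D$ meeting $V$ is contained in $U$. Setting $U^\ast = \bigcup\{\,d \in \D : d \subset U\,\}$, the set $U^\ast$ contains $V$ (hence contains $x$) and is open by the equivalent characterization of upper semicontinuity stated earlier in the excerpt (property (2) of the Daverman proposition).

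Next, because $U^\ast$ is a union of decomposition elements, it is saturated, so $\pi^{-1}(\pi(U^\ast)) = U^\ast$ and therefore $\pi(U^\ast)$ is open in $M/\D$. Since $\pi(x) \in \pi(U^\ast)$ and $M/\D$ is locally connected at $\pi(x)$, I can select a connected open neighborhood $W$ of $\pi(x)$ with $W \subset \pi(U^\ast)$. Now I consider $\pi^{-1}(W)$: it is open by continuity of $\pi$, it is contained in $U^\ast \subset U$, and it contains $x$ because $\pi(x) \in W$ and the fiber over $\pi(x)$ is exactly $\{x\}$.

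The final step is to verify connectedness of $\pi^{-1}(W)$. This is immediate from Proposition \ref{Montone if and only if connected}: the decomposition $\D$ is upper semicontinuous and monotone, and $W$ is connected, so $\pi^{-1}(W)$ is connected. Thus $\pi^{-1}(W)$ is a connected open neighborhood of $x$ inside $U$, and since $U$ was arbitrary, $M$ is locally connected at $x$. I do not expect a genuine obstacle here; the entire argument is a short diagram chase, and the proof makes essential use of each hypothesis exactly once (singleton for $x \in \pi^{-1}(W)$, upper semicontinuity to produce the saturated open $U^\ast$, and monotonicity to ensure $\pi^{-1}(W)$ stays connected).
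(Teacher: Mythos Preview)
Your proof is correct and follows essentially the same approach as the paper's: form the saturated open set $U^\ast$, push it forward to an open set in $M/\D$, choose a connected neighborhood there by local connectedness, and pull it back using Proposition~\ref{Montone if and only if connected} to obtain a connected neighborhood of $x$ inside $U$. Your write-up is in fact slightly more careful than the paper's sketch, since you explicitly verify that $\pi^{-1}(W)$ is open and contains $x$.
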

The following definition allows us to construct subspace decompositions:

\begin{defn}[Subspace Decomposition]
	Let $\D$ be a decomposition of a Hausdorff space $M$. Let $W\subset M$ such that if $d \cap W$ is non-empty for any $d\in \D$, then $d\subset W$. The \textit{induced subspace decomposition} of $W$ is the decomposition consisting of all members of $\D$ which are contained in $W$. If $\D$ is upper semi-continuous, then the induced subspace decomposition is as well.
\end{defn}
This definition will become vital for understanding the decomposition of $\bnd Y\subset \bnd X$.

\begin{defn}[Null Family]
	A collection of subsets $\A$ of a metric space is a \textit{null family} if for each $\varepsilon>0$, only finitely many $A\in \A$ have diameter greater than $\varepsilon$.
\end{defn}

\begin{prop}[Saturation condition, {\cite[8.9]{HR17}}] \label{Null Family Saturation}
	Let $\A$ be a null family of compact sets in a metric space $M$. Suppose $q\in M$ and $q$ is not in any of the members of $\A$. Then each neighborhood $U$ of $q$ contains a smaller neighborhood $V$ of $q$ such that for each $A\in \A$, if $A\cap V \neq \emptyset$, then $A\subset U$.
\end{prop}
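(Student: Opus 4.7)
The plan is to exploit the null-family hypothesis to separate $\A$ into a finite ``large'' part, which can be avoided by positive distance, and an ``arbitrarily small'' part, which is automatically contained in any moderately wide ball around $q$. Concretely, I would first fix an $r>0$ such that $B(q,r)\subset U$, which exists because $U$ is open. The strategy then is to pick $V$ to be a ball $B(q,\delta)$ where $\delta$ is chosen small enough to do two things simultaneously: force any $A\in\A$ of small diameter that meets $V$ to fit inside $B(q,r)$, and force any $A\in\A$ of large diameter to miss $V$ entirely.

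To make this precise, set the threshold at $r/2$. By the null-family hypothesis, only finitely many members of $\A$, say $A_{1},\dots,A_{n}$, have diameter at least $r/2$. Since each $A_{i}$ is compact and does not contain $q$, we have $\delta_{i}:=d(q,A_{i})>0$. Choose
\[
\delta \;<\; \min\bigl\{\, r/2,\; \delta_{1},\dots,\delta_{n}\,\bigr\},
\]
and put $V:=B(q,\delta)$. Now suppose $A\in\A$ meets $V$, say at a point $x$ with $d(q,x)<\delta$. If $\operatorname{diam}(A)\geq r/2$, then $A=A_{i}$ for some $i$, but then $d(q,A_{i})=\delta_{i}>\delta$, contradicting $x\in A\cap V$. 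Hence $\operatorname{diam}(A)<r/2$, and so for any $y\in A$,
\[
d(q,y)\;\leq\; d(q,x)+d(x,y)\;<\;\delta+r/2\;\leq\;r,
\]
which gives $A\subset B(q,r)\subset U$, as desired.

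The argument is essentially a two-scale pigeonhole, and there is no real obstacle: the only conceptual point is the right choice of threshold ($r/2$ rather than $r$), which is needed so that a small-diameter set touching $V$ still fits inside $B(q,r)$. Once that is in place, compactness of each large-diameter $A_{i}$ supplies the positive lower bound $\delta_{i}$, and finiteness of the collection $\{A_{1},\dots,A_{n}\}$ (from the null-family property) ensures the minimum of these finitely many positive numbers is itself positive, so a valid $\delta$ exists.
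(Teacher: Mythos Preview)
Your proof is correct and matches the paper's approach exactly: choose a ball $B(q,r)\subset U$, use the null-family hypothesis to isolate the finitely many members of diameter $\geq r/2$, and take $\delta$ smaller than both $r/2$ and the minimum distance from $q$ to those finitely many large sets. The paper's version is simply a terser rendition of the same argument.
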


\begin{remark}\label{remark: null family condition}
		In view of how the metric works on $\bnd X$, the notion of a null family can be reformulated topologically as follows: Fix $x_0$ as the basepoint for the cone topology. A collection $\A$ of subspaces is a null family in $\bnd X$ if there is some $D>0$ such that for each $r<\infty$, only finitely many members of $\A$ are not contained in a set of the form $U(\cdot, r, D)$. Under the metric $d_D$ on $\partial_{x_0} X$, only those members of $\A$ which lie in $U(\cdot, r, D)$ have diameter at most $1/r$.
	
	A similar condition can be put on the cone topology of $\overline{X}$. We do not need to deal with this directly, however. Work of Bestvina says that $\overline{X}$ is metrizable and this metric is compatible with the cone topology, which is sufficient for our needs \cite{Bes96}.
\end{remark}

\begin{proposition} [{\cite[8.12]{HR17}}] \label{Family of Spheres}
	Let $X$ be a CAT(0) space with isolated flats with respect to the family $\bP$. Let $\A$ be the collection of spheres $\{ \bnd F: F\in \bP \}$. Then $\A$ is a null family of disjoint compact subsets of $\bnd X$. 
\end{proposition}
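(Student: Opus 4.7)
The statement has three parts to verify: compactness of each $\bnd F$, pairwise disjointness, and the null-family condition. Compactness is immediate since each $F \in \bP$ is isometric to some $\E^n$ with $n \geq 2$, so $\bnd F$ is a standard Euclidean sphere $S^{n-1}$, hence compact.

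For disjointness, I would argue by contradiction. Suppose distinct flats $F_1, F_2 \in \bP$ share a boundary point $\xi$. Choose rays $c_i \subset F_i$ (starting at points of $F_i$) with $c_i(\infty) = \xi$. Since $c_1$ and $c_2$ are asymptotic in a CAT(0) space, there is an $r > 0$ with $c_1 \subset N_r(c_2) \subset N_r(F_2)$. Then $c_1 \subset F_1 \cap N_r(F_2) \subset N_r(F_1) \cap N_r(F_2)$, so this intersection has infinite diameter, contradicting the isolated-flats condition that $\mathrm{diam}(N_r(F_1) \cap N_r(F_2)) < \rho(r)$.

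The main step is the null-family condition; I will use the topological reformulation from Remark \ref{remark: null family condition}. Fix $x_0 \in X$ and, for each $F \in \bP$, let $p_F$ be the CAT(0) projection of $x_0$ onto $F$. The key input is Theorem \ref{Quasi-convex Constant}(2): every geodesic from $x_0$ to a point of $F$ (and hence every ray from $x_0$ with endpoint in $\bnd F$) intersects the $\kappa$-neighborhood of $p_F$, for a uniform $\kappa$. Set $R_F := d(x_0, p_F)$. If $c$ is the ray from $x_0$ to some $\xi \in \bnd F$ and $c(t) \in N_\kappa(p_F)$, then $|t - R_F| \leq \kappa$ (since $d(x_0, c(t)) = t$), so $d(c(R_F), p_F) \leq 2\kappa$. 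Applying this to two rays $c_1, c_2$ representing points of $\bnd F$ gives $d(c_1(R_F), c_2(R_F)) \leq 4\kappa$. Choosing $D := 4\kappa + 1$, the monotone function $t \mapsto d(c_1(t), c_2(t))$ does not reach $D$ before time $R_F$, so $d_D(\xi_1, \xi_2) \leq 1/R_F$. Thus
\[
\mathrm{diam}_{d_D}(\bnd F) \;\leq\; \tfrac{1}{R_F}.
\]

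It remains to show that only finitely many $F \in \bP$ satisfy $R_F \leq R$ for each $R > 0$. Since $(G, \bP)$ has only finitely many conjugacy classes of peripheral subgroups, $\bP$ consists of finitely many $G$-orbits of flats. For each orbit $G \cdot F_0$, fix a compact fundamental domain $K$ for the cocompact action of the stabilizer on $F_0$; the properness of the $G$-action on $X$ then implies that only finitely many translates $gK$ (hence finitely many translates $gF_0$) meet the compact set $B(x_0, R)$. Summing over the finitely many orbits, only finitely many $F \in \bP$ have $F \cap B(x_0, R) \neq \emptyset$, i.e.\ $R_F \leq R$. Combined with the diameter estimate, for any $\epsilon > 0$ at most finitely many $\bnd F$ have $d_D$-diameter exceeding $\epsilon$, which is exactly the null-family condition. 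The main obstacle is organizing the geometric estimate correctly; once Theorem \ref{Quasi-convex Constant}(2) is invoked, the rest reduces to the uniform projection bound and a standard cocompactness argument.
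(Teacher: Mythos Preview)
The paper does not prove this proposition; it simply quotes it from \cite[8.12]{HR17} without argument, so there is no in-paper proof to compare against. Your proposal is a correct, self-contained proof: compactness is immediate, disjointness follows cleanly from the diameter bound in the isolated-flats definition, and your null-family argument---bounding $\mathrm{diam}_{d_D}(\bnd F)$ by $1/R_F$ via Theorem~\ref{Quasi-convex Constant}(2) and then counting flats meeting a ball via properness of the $G$-action---is exactly the natural route and is almost certainly how the argument runs in \cite{HR17} as well.

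One small point worth tightening: Theorem~\ref{Quasi-convex Constant}(2) as stated applies to geodesic \emph{segments} from $x_0$ to points of $F$, and you silently extend it to rays with endpoint in $\bnd F$. This is fine, but say why: take a ray $c' \subset F$ asymptotic to $\xi$, apply the theorem to the segments $[x_0, c'(n)]$, and pass to the limit (the $\kappa$-ball about $p_F$ is closed). With that one line added the argument is complete.
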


\begin{prop}[{\cite[I.2.3]{Dav86}}]
	Let $\D$ be a decomposition of a metric space $M$ such that each $d\in \D$ is compact. Let $\A$ be the collection of $d\in \D$ such that $d$ is not a singleton. Then if $\A$ is a null family, $\D$ is an upper semicontinuous decomposition. 
\end{prop}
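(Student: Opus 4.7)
The plan is to verify the definition of upper semicontinuous directly. Fix $d \in \D$ and an open set $U \supset d$. Since $d$ is compact and $U$ is open, there exists $\varepsilon > 0$ with $N_\varepsilon(d) \subset U$ (the open $\varepsilon$-neighborhood). I will produce an open $V \supset d$ of the form $V = N_\delta(d)$ for a suitable $\delta \leq \varepsilon/4$ so that every $d' \in \D$ meeting $V$ sits inside $U$.

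The key step is choosing $\delta$. By the null family hypothesis, only finitely many members of $\A$ have diameter at least $\varepsilon/4$; list the ones distinct from $d$ itself as $A_1, \ldots, A_n$. Each $A_i$ is compact and, because $\D$ is a partition and $A_i \neq d$, disjoint from the compact set $d$, hence at a strictly positive distance $\mathrm{dist}(d, A_i)$ from $d$. Let
\[
\delta \;=\; \min\bigl\{\varepsilon/4,\; \mathrm{dist}(d,A_1),\; \ldots,\; \mathrm{dist}(d,A_n)\bigr\},
\]
with the convention that the minimum is $\varepsilon/4$ when no $A_i$ exist (which covers the case $d$ is a singleton). Set $V = N_\delta(d)$, an open neighborhood of $d$ contained in $N_\varepsilon(d) \subset U$.

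Now I verify the saturation property. Suppose $d' \in \D$ with $d' \cap V \neq \emptyset$. If $d' = d$ there is nothing to check. If $d' \neq d$ and $\mathrm{diam}(d') \geq \varepsilon/4$, then $d' \in \A$ forces $d' = A_i$ for some $i$, and therefore $\mathrm{dist}(d',d) \geq \delta$, contradicting $d' \cap V \neq \emptyset$. The only remaining case is $\mathrm{diam}(d') < \varepsilon/4$, which in particular handles every singleton $d'$ distinct from $d$. Pick $x \in d' \cap V$; for any $y \in d'$ the triangle inequality gives $\mathrm{dist}(y,d) \leq d(x,y) + \mathrm{dist}(x,d) < \varepsilon/4 + \delta \leq \varepsilon/2$, so $d' \subset N_\varepsilon(d) \subset U$.

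The main (mild) obstacle is conceptual rather than technical: one must see that the null family hypothesis is precisely engineered to give the dichotomy needed, namely finitely many ``large'' other members that can be kept at uniform positive distance from $d$ by compactness and disjointness, together with the tail of small members which, once they touch a small enough neighborhood of $d$, automatically fit inside $N_\varepsilon(d)$. This is essentially the same argument as the Null Family Saturation proposition, generalized from the case of a point $q$ outside $\A$ to the case of an arbitrary compact member $d$ of $\D$ (possibly itself in $\A$), by excluding $d$ from the finite list $A_1, \ldots, A_n$ and appealing to the partition property to supply the positive distances.
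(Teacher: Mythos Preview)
Your proof is correct and follows essentially the same approach as the paper's: use the null family hypothesis to separate the finitely many large members of $\A$ from $d$ by a positive distance, and observe that the remaining small members which meet a thin enough neighborhood of $d$ must lie inside $U$. The only cosmetic difference is that you first extract a uniform $\varepsilon$ with $N_\varepsilon(d)\subset U$ and work with a single $\delta$, whereas the paper works pointwise with $\varepsilon_x,\delta_x$ for each $x\in d$ and takes $V=\bigcup_{x\in d} B(x,\delta_x)$; your version is arguably cleaner and also makes explicit the exclusion of $d$ itself from the list $A_1,\ldots,A_n$.
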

\begin{corollary}[{\cite[8.13]{HR17}}]
	Let $G$ be a CAT(0) group with isolated flats relative to $\bP$ acting geometrically on a CAT(0) space $X$. Let $\A$ be the collection of spheres which bound maximal dimensional flats in $\bnd X$. Then the quotient map $\bnd X \to \bnd X/ \A \cong \partial (G,\bP)$ given in Theorem \ref{Tran Theorem} is upper semicontinuous and monotone.
\end{corollary}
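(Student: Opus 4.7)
The plan is to assemble the corollary directly from the preparatory results; essentially all of the real content is in Proposition~\ref{Family of Spheres} and in the general criterion \cite[I.2.3]{Dav86} recorded immediately above. First I would unpack what the decomposition $\D$ associated with $\A$ actually is: its non-singleton members are the spheres $\bnd F$ for $F$ ranging over the maximal flats stabilized by elements of $\bP$, while every other $x \in \bnd X$ lies in its own singleton member $\{x\}$. The quotient $\bnd X/\D$ is then precisely the space obtained by collapsing each such sphere to a point, which by Theorem~\ref{Tran Theorem} is $G$-equivariantly homeomorphic to $\bnd(G,\bP)$.

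For monotonicity, I would note that in a $\cat(0)$ space with isolated flats every maximal flat $F$ has dimension at least $2$, so $\bnd F$ is a standard Euclidean sphere of dimension at least $1$ and is therefore compact and connected; singleton members are trivially compact and connected, so every member of $\D$ meets both requirements. For upper semicontinuity the key observation is that the non-singleton members of $\D$ are exactly the elements of $\A$, and Proposition~\ref{Family of Spheres} tells us $\A$ is a null family of disjoint compact subsets of $\bnd X$. The criterion \cite[I.2.3]{Dav86} stated just before the corollary says precisely that any decomposition of a metric space whose non-singleton members form a null family of compact sets is upper semicontinuous, which applies verbatim.

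There is essentially no obstacle in the corollary itself; the difficulty has been absorbed into Proposition~\ref{Family of Spheres}, which uses both the maximality and the isolation conditions of the definition of isolated flats — the former to get compactness of each $\bnd F$ at a uniform diameter, the latter to ensure that distinct spheres are disjoint and to force all but finitely many of them to lie in arbitrarily small cone neighborhoods (giving the null family property via Remark~\ref{remark: null family condition}). Once that proposition is invoked, the corollary is purely bookkeeping.
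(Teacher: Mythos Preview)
Your proposal is correct and follows exactly the approach implied by the paper's arrangement of results: the paper does not supply an explicit proof of this corollary (it is cited from \cite{HR17}), but its placement immediately after Proposition~\ref{Family of Spheres} and the null-family criterion \cite[I.2.3]{Dav86} makes clear that the intended argument is precisely the one you outline. The monotonicity observation (each $\bnd F$ is a sphere of dimension at least $1$, hence compact and connected) and the application of the null-family criterion for upper semicontinuity are exactly what is needed.
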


\begin{corollary}[{\cite[8.14]{HR17}}]   \label{Non-flat points are l.c.}
	Let $G$ be 1-ended and CAT(0) with isolated flats relative to $\bP$. Then $\bnd X$ is locally connected at each $x$ which is not in the boundary of a flat. 
\end{corollary}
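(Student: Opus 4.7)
The plan is to deduce this corollary from the preceding machinery, essentially by applying Proposition \ref{Singleton points are locally connected} (the ``singleton points are locally connected'' proposition) to the decomposition map $\pi \colon \bnd X \to \bnd(G,\bP)$. The two key inputs needed for that proposition are: (i) the decomposition is upper semicontinuous and monotone, and (ii) the quotient space $\bnd(G,\bP)$ is locally connected at $\pi(x)$.

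For input (i), the immediately preceding corollary (the Tran-style description via the family of boundary spheres of maximal flats) gives exactly what is needed: the decomposition $\D$ whose nontrivial elements are $\{\bnd F : F \in \bP\}$ is upper semicontinuous and monotone, and the quotient $\bnd X / \D$ is (equivariantly) homeomorphic to the Bowditch boundary $\bnd(G,\bP)$. By hypothesis, $x$ does not lie in the boundary of any flat, so $x$ is not contained in any nontrivial member of $\D$. Hence $\{x\} \in \D$ is a singleton member, which is precisely the hypothesis required by Proposition \ref{Singleton points are locally connected}.

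For input (ii), I would invoke part (1) of Theorem \ref{thm: peripheral splitting}. In the isolated flats setting, every peripheral subgroup $P \in \bP$ is a flat stabilizer, hence virtually abelian of rank at least $2$. Such groups are finitely presented, $1$-ended, and contain no infinite torsion subgroup. Together with the assumption that $G$ is $1$-ended, this places us exactly in the situation where Bowditch's theorems apply, and so $\bnd(G,\bP)$ is both connected and locally connected. In particular, $\bnd(G,\bP)$ is locally connected at the point $\pi(x)$.

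With (i) and (ii) in hand, Proposition \ref{Singleton points are locally connected} applies verbatim to the singleton $\{x\}$ and yields that $\bnd X$ is locally connected at $x$. I do not anticipate a real obstacle here: all the nontrivial content has been packaged into the previous corollary (upper semicontinuity and monotonicity of $\pi$) and into Bowditch's local connectivity theorem. The corollary is simply the synthesis of those two ingredients via elementary decomposition theory.
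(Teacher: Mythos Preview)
Your proposal is correct and follows essentially the same route as the paper: invoke the preceding corollary for upper semicontinuity and monotonicity of $\pi$, use Theorem \ref{thm: peripheral splitting}(1) for local connectivity of $\bnd(G,\bP)$, and then apply Proposition \ref{Singleton points are locally connected} to the singleton $\{x\}$. The paper's proof is terser and does not spell out the verification that each $P\in\bP$ satisfies Bowditch's hypotheses, but the argument is the same.
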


\begin{proof}
	From Theorem \ref{thm: peripheral splitting}, $\partial(G,\bP)$ is locally connected. Since the decomposition is upper semicontinuous and monotone, the singletons in the decomposition are locally connected as well. In this decomposition, the singletons are exactly the points which are not in the boundary of any flat.
\end{proof}

Recall the definition of $Y$(Definition \ref{defn Y}) at the end of Section \ref{section:fixing a space}: Let $p\colon X\to \T$ be the projection map onto the Bass-Serre tree $\T$. Then for a component vertex $v\in \T$, $Y_v=p\inverse(Star(v))$. Since we will being working with generic $Y_v$, we shall just denote it as $Y$.
\begin{thm} \label{monotone usc decomp of Y}
	The restriction of $\pi\colon \bnd X \to \bnd (G,\bP)$ to $\bnd Y$ defines a monotone upper semicontinuous decomposition and the image of $\bnd Y$ is $\partial (G_v, \bP_v)$, the Bowditch boundary for the corresponding component vertex. Here \linebreak $\bP_v : = \{ P\cap G_v: P\in \bP\}$.
\end{thm}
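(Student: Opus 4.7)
The plan is to realize $\bnd Y_v$ as a saturated subset of $\bnd X$ with respect to the decomposition $\D$ from Theorem \ref{Tran Theorem}, whose non-singleton members are the spheres $\bnd F_u$ of maximal flats, and then to apply the induced subspace decomposition machinery of \cite{Dav86}. I first observe that any geodesic ray contained in $Y_v$ has itinerary supported on $Star(v)$ and therefore of length at most $2$: it enters $B_v=X_v$ and at most one peripheral block $B_w=F_w$ with $w\in \P_v$. Consequently every point of $\bnd Y_v$ is rational, and
$$\bnd Y_v \;=\; \bnd X_v \;\cup\; \bigcup_{w\in \P_v}\bnd F_w,$$
where $\bnd X_v\cap \bnd F_w=\bnd F_e$ for $e$ the edge of $\T$ joining $v$ to $w$.

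The main technical step is saturation: for every peripheral vertex $u\in \P$, either $F_u\subset Y_v$ (the case $u\in \P_v$, in which $\bnd F_u\subset \bnd Y_v$ is automatic) or $\bnd F_u\cap \bnd Y_v=\emptyset$. In the second case I argue by contradiction. If $\alpha\in \bnd F_u\cap \bnd Y_v$, the description above places $\alpha$ in some $\bnd B_{u'}$ with $u'\in Star(v)$. If $u'\in \P_v$, then $\alpha$ lies in two boundary spheres $\bnd F_{u'}$ and $\bnd F_u$ of distinct maximal flats, contradicting the disjointness of the sphere family of Proposition \ref{Family of Spheres}. If $u'=v$, then $v$ and $u$ are non-adjacent vertices of $\T$, so Lemma \ref{Block int}(3) places $\alpha$ in $\bnd W_e$ for every wall $W_e$ on the $\T$-path between them; taking $e$ to be the first edge $\{v,w\}$, $w\in \P_v$, the identification $F_e\times\{1/2\}\cong F_e\subset F_w$ gives $\bnd W_e=\bnd F_e\subset \bnd F_w$, and once again $\alpha\in \bnd F_w\cap \bnd F_u$ contradicts Proposition \ref{Family of Spheres}.

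With saturation in hand, the induced subspace decomposition of $\bnd Y_v$ inherits upper semicontinuity from $\D$, and its non-singleton members are the Euclidean spheres $\bnd F_w$, $w\in \P_v$, which are compact and connected, so the decomposition is monotone. To identify the quotient with $\bnd(G_v,\bP_v)$, I invoke Theorem \ref{thm: peripheral splitting}(2): $(G_v,\bP_v)$ is relatively hyperbolic, and the Convex Splitting Theorem provides the convex subspace $X_v\subset X$ on which $G_v$ acts geometrically. Theorem \ref{Tran Theorem} applied to this action yields a homeomorphism from $\bnd X_v$ with each $\bnd F_e$ collapsed onto $\bnd(G_v,\bP_v)$. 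Since for each $w\in \P_v$ the set $\bnd F_w\setminus \bnd X_v$ lies in the same $\pi$-fiber as $\bnd F_e=\bnd X_v\cap \bnd F_w$, we have $\pi(\bnd Y_v)=\pi(\bnd X_v)$, and the restriction of the $\bnd F_w$-collapse to $\bnd X_v$ is exactly the $\bnd F_e$-collapse. Hence $\pi(\bnd Y_v)\cong \bnd(G_v,\bP_v)$.

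The critical step is the saturation argument of the second paragraph; everything after it is a routine application of Daverman's induced subspace decomposition theory together with Theorem \ref{Tran Theorem} applied to the component vertex group. The indispensable input for saturation is the disjointness of boundary spheres of distinct maximal flats recorded in Proposition \ref{Family of Spheres}.
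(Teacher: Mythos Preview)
Your proposal is correct and follows essentially the same route as the paper: establish that $\bnd Y_v$ is saturated with respect to the decomposition $\D$, invoke the induced subspace decomposition to inherit upper semicontinuity and monotonicity, and then identify the quotient with $\bnd(G_v,\bP_v)$ via Tran's theorem applied to $G_v$ acting on $X_v$. The only difference is that the paper simply asserts saturation as ``clear,'' whereas you supply an explicit argument using Lemma~\ref{Block int}(3) and the disjointness of boundary spheres from Proposition~\ref{Family of Spheres}; this added detail is a genuine improvement in rigor but not a different strategy.
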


\begin{proof}
	First, see that restricting $\pi$ to $\bnd Y$ is a subspace decomposition for if $d\cap \bnd Y$, then $d\subset \bnd Y$. This is clear since for each $d\in \D$, either $d$ is a singleton or $d$ is the boundary of a maximal dimensional flat and in either of those cases, $d\cap \bnd Y$ means $d\subset \bnd Y$. Additionally this subspace decomposition is monotone. Therefore we have an upper semicontinuous decomposition $\pi\colon  \bnd Y \to \pi(\bnd Y)$. 
	
	Next, consider the relatively hyperbolic pair $(G_v,\bP_v)$. By the arguments above, the map $\pi'\colon  \bnd X_v \to \bnd (G_v, \bP_v)$ is an upper semicontinuous decomposition. Since the peripheral structure of $G_v$ is inherited from $G$, then $\pi(\bnd X_v) = \pi'(\bnd X_v)$.
	
	Lastly, we need to show that $\pi(\bnd Y) = \pi (X_v)$. To see this, note that if $d\subset \bnd Y$, then either $d$ is a singleton or $d$ is the boundary of a flat. In the former case, then $d\in \bnd X_v$. In the latter case, $d\cap \bnd X_v \neq \emptyset$. The map $\pi$ collapses all points of $d$ to a single point, therefore $\pi(d) = \pi (d\cap \bnd X_v)$, and therefore $\pi(\bnd Y) = \bnd (G_v,\bP_v)$.

\end{proof}

\begin{corollary}\label{Y non-flat points lconn}
	If $x\in \bnd Y$ and $x\notin \bnd F$ for any flat $F$, then $\bnd Y$ is locally connected at $x$.
\end{corollary}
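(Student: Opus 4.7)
The plan is to combine Theorem \ref{monotone usc decomp of Y} with Proposition \ref{Singleton points are locally connected} and the local connectivity part of Theorem \ref{thm: peripheral splitting}.

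First, I would observe that the hypothesis $x\notin \bnd F$ for every flat $F$ means precisely that $\{x\}$ is a singleton element of the decomposition $\pi\colon \bnd Y\to \bnd(G_v,\bP_v)$, since the non-singleton decomposition elements are exactly the boundary spheres of the maximal-dimensional flats.

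Next, I would invoke Theorem \ref{thm: peripheral splitting}(3) to conclude that $\bnd(G_v,\bP_v)$ is locally connected. To apply this, I need to check the hypotheses for a $\cat(0)$ group with isolated flats: the peripheral collection $\bP$ consists of virtually abelian subgroups of rank $\geq 2$, hence is finitely generated, 1-ended, and free of infinite torsion subgroups. Moreover, since $G_v$ is a component vertex of the maximal peripheral splitting, $\bP_v=\{P\cap G_v:P\in\bP\}$ consists of subgroups of virtually abelian groups, so each element of $\bP_v$ is finitely generated, 1- or 2-ended, and has no infinite torsion subgroup. Thus the hypotheses of Theorem \ref{thm: peripheral splitting}(3) are satisfied and $\bnd(G_v,\bP_v)$ is locally connected at every point, in particular at $\pi(x)$.

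Finally, Theorem \ref{monotone usc decomp of Y} tells us that $\pi\colon \bnd Y\to \bnd(G_v,\bP_v)$ is an upper semicontinuous monotone decomposition, so Proposition \ref{Singleton points are locally connected} applies directly to the singleton $\{x\}$ and yields local connectedness of $\bnd Y$ at $x$. There is no real obstacle here; the work was done in setting up the subspace decomposition in Theorem \ref{monotone usc decomp of Y}, and the corollary is essentially a bookkeeping application of the general decomposition-theoretic machinery already assembled.
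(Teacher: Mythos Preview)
Your proposal is correct and matches the paper's own proof essentially line for line: the paper's argument is precisely that $x$ is a singleton of the monotone upper semicontinuous decomposition from Theorem \ref{monotone usc decomp of Y}, that the quotient $\bnd(G_v,\bP_v)$ is locally connected by Theorem \ref{thm: peripheral splitting}, and that Proposition \ref{Singleton points are locally connected} then gives local connectedness at $x$. If anything, you supply more detail than the paper in verifying the hypotheses on $\bP$ and $\bP_v$.
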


\begin{proof}
	This follows from the fact that we have a monotone, upper semicontinuous decomposition to a locally connected space (Theorem \ref{thm: peripheral splitting}) and that $x$ is a singleton in this decomposition.
\end{proof}

\section{Proper and Cocompact Action} \label{section:proper and cocompact}

Our goal of this section is to show that the stabilizer of a flat of $X_v$ also acts properly and cocompactly on $\bnd Y$ minus the higher-dimensional flat. Recall $X_v$ is the vertex-space for a component vertex $v$ of the maximal peripheral splitting of $G$. Since we need to distinguish between flats of $X_v$ and flats of $Y$, we need to establish some notation. Throughout this section, fix a flat $F\subset X_v$ and let $F^*$ be the maximal dimensional flat of $X$ which contains $F$. By our construction of $Y$, $F^*\subset Y$ but $F^*$ is not necessarily contained in $X_v$. Let $P$ and $P^*$ stabilize $F$ and $F^*$ respectively. Lastly, define $\Omega:= \bnd Y \setminus \bnd F^*$.

The ideas from this section come from \cite{Ha17}, which proves the results in the case where $\bnd X$ is locally connected. In that setting, $Y$ and $X_v$ are the same and $F=F^*$.

The stabilizer $P$ acts properly and cocompactly on $F$. With some work, one can show that $P$ also acts properly and cocompactly on $\Omega$. Furthermore, the fundamental domains for the action of $P$ on both can be picked in a way which allows us to relate the two spaces. The following lemma of Bowditch  establishes the proper and cocompact action of $P$ on $\Omega$ \cite{Bow12}:

\begin{lem}\label{Bow1}
	Let $P$ be a group acting on topological spaces $C$ and $D$. Define the action of $P$ on $C\times D$ as the diagonal action and let $\R \subset C\times D$. If $\R$ is $P$-invariant and the projection maps $\pi_C$ and $\pi_D$ from $\R$ to $C$ and $D$ are both proper and surjective, then the following are equivalent:
	\begin{enumerate}[(a)]
		\item $P$ acts properly and cocompactly on $C$
		\item $P$ acts properly and cocompactly on $D$
		\item $P$ acts properly and cocompactly on $\R$
	\end{enumerate}
\end{lem}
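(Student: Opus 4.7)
The plan is to prove the three equivalences by establishing (a) $\Leftrightarrow$ (c); the argument for (b) $\Leftrightarrow$ (c) is identical, swapping the roles of $C$ and $D$ and of $\pi_C$ and $\pi_D$. The underlying observation is that $\pi_C \colon \R \to C$ is a continuous, proper, surjective, $P$-equivariant map: equivariance is automatic because $\R \subset C \times D$ is $P$-invariant under the diagonal action, so $\pi_C(p \cdot r) = p \cdot \pi_C(r)$. A $P$-equivariant proper surjection transfers both properness and cocompactness of the group action in either direction, and the proof is essentially bookkeeping with push-forwards and pull-backs of compact sets.

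For (c) $\Rightarrow$ (a): if $K \subset \R$ is compact with $P \cdot K = \R$, then $\pi_C(K)$ is compact and $P \cdot \pi_C(K) = \pi_C(P \cdot K) = \pi_C(\R) = C$, so $P$ acts cocompactly on $C$. For properness, fix a compact $L \subset C$; then $\pi_C^{-1}(L)$ is compact by properness of $\pi_C$. If $pL \cap L \neq \emptyset$, pick $c \in pL \cap L$ and any $r \in \pi_C^{-1}(p^{-1}c)$; then $r, pr \in \pi_C^{-1}(L)$, witnessing $p\pi_C^{-1}(L) \cap \pi_C^{-1}(L) \neq \emptyset$. So $\{p : pL \cap L \neq \emptyset\}$ is contained in the (finite) set witnessing properness of the $P$-action on $\R$ for the compact set $\pi_C^{-1}(L)$.

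For (a) $\Rightarrow$ (c): take compact $K \subset C$ with $PK = C$ and set $K' = \pi_C^{-1}(K)$, which is compact since $\pi_C$ is proper. Then $P \cdot K' = \pi_C^{-1}(PK) = \pi_C^{-1}(C) = \R$, giving cocompactness on $\R$. For properness, given a compact $L \subset \R$ the image $\pi_C(L)$ is compact; if $pL \cap L \neq \emptyset$ then applying $\pi_C$ yields $p \pi_C(L) \cap \pi_C(L) \neq \emptyset$, and by properness of the $P$-action on $C$ there are only finitely many such $p$.

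There is no serious obstacle here: the proof uses only that $\pi_C$ is a $P$-equivariant proper continuous surjection, and nothing about the particular topology of $C$, $D$, or $\R$. The only thing to watch is being consistent about the definition of a proper action (for each compact $K$, the set $\{p \in P : pK \cap K \neq \emptyset\}$ is finite, or precompact if one works with non-discrete $P$); the four one-line arguments above go through verbatim under either convention. This generality is exactly what is needed for the next section, where the lemma is applied to $C = F$, $D = \Omega$, and a suitably chosen subspace $\R$ relating them.
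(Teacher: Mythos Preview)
Your proof is correct. The paper does not actually prove this lemma; it merely quotes it from Bowditch \cite{Bow12} and moves on, so there is no ``paper's own proof'' to compare against. Your argument is the standard one: a $P$-equivariant, continuous, proper, surjective map transfers both properness and cocompactness of the action in either direction, and you have carried out all four transfers cleanly. The only cosmetic point is that in the (c)$\Rightarrow$(a) properness step you might make explicit that $p^{-1}c \in L$ (since $c \in pL$) before asserting $r \in \pi_C^{-1}(L)$, but the logic is sound as written.
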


Define $\bot(F)$ to be the set of geodesic rays orthogonal to $F$. Recall a geodesic ray $r\colon [0,\infty) \to X$ is \textit{orthogonal} to a convex set $C\subset X$ if $r(0)\in C$ and for every $t>0$ and any $y\in C$, the Alexandrov angle, $\angle_{r(0)}(r(t),y)$, is greater than or equal to $\pi/2$. 

We will apply this lemma where $F$ and $\Omega$ are $C$ and $D$ respectively.  Let $\R:= \{ (x,q): \text{there exists } q\in \bot(F) \text{ with } d(x,q(0))\leq A\}$, where $A$ is the diameter of the fundamental domain for the action of $P$ on $F$. Throughout, we will assume our basepoint for the topology on $\bnd Y$ is in $F$. Since $P$ acts properly and cocompactly on $F$, we need to show the $P$-invariance of $\R$ and the projection maps to $F$ and $\Omega$ are both proper and surjective.

\begin{lem}
	$\R$ is $P$-invariant.
\end{lem}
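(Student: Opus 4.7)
The plan is to unpack the definition of $\R$ and verify $P$-invariance by a direct chase, using only that $P$ acts on $X$ by isometries and stabilizes $F$. The crux is that each of the three defining features of membership in $\R$, namely (i) the second coordinate lies in $\Omega$, (ii) there is a perpendicular ray representing it, and (iii) the foot of that perpendicular is within $A$ of the first coordinate, is preserved by the $P$-action.

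In more detail: I would fix $p\in P$ and an element $(x,\alpha)\in \R$ with $x\in F$ and $\alpha \in \Omega=\bnd Y\setminus \bnd F^*$, and pick a witnessing ray $q\in \bot(F)$ representing $\alpha$ with $d(x,q(0))\le A$. I want to show $(px,p\alpha)\in \R$. Since $P\le P^*$ stabilizes $F^*$ and the action on $\bnd Y$ is induced from the isometric action on $X$, we have $p\alpha \notin \bnd F^*$, so $p\alpha \in \Omega$. Next, because $p$ is an isometry of $X$ preserving $F$, the translated ray $p\cdot q$ satisfies $p\cdot q(0)=p\cdot q(0)\in F$ and all Alexandrov angles $\angle_{p\cdot q(0)}(p\cdot q(t),py)$ equal $\angle_{q(0)}(q(t),y)\ge \pi/2$ for every $y\in F$ and $t>0$; since $P$ acts transitively on $F$ (well, just using isometry and $PF=F$), this shows $p\cdot q\in \bot(F)$. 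The ray $p\cdot q$ represents $p\alpha$. Finally, $d(px,p\cdot q(0))=d(x,q(0))\le A$. Hence $(px,p\alpha)\in \R$.

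I expect no real obstacle here; the only small subtlety is making sure the second coordinate stays in $\Omega$ and that being orthogonal to $F$ is preserved by $p$. The former uses that $P$ preserves $F^*$ (which it does as a subgroup of $P^*$), and the latter uses that $p$ being an isometry preserves Alexandrov angles and takes $F$ to itself. With those observations, $P$-invariance of $\R$ is immediate.
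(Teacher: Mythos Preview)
Your proposal is correct and follows essentially the same approach as the paper: verify the distance condition by isometry, and verify that $p\cdot q\in\bot(F)$. The only notable difference is in how orthogonality is checked --- you argue directly that isometries preserve Alexandrov angles and $pF=F$, while the paper instead uses the nearest-point-projection characterization and argues by contradiction; your route is slightly cleaner. You also explicitly verify that the second coordinate remains in $\Omega$ (using $P\le P^*$), a point the paper's proof leaves implicit.
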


\begin{proof}
	$P$ acts by isometries on $Y$, so if $(x,q)\in \R$ and $p\in P$, then \linebreak $d(p\cdot x, p\cdot q(0))\leq A$. It is clear that $p\cdot q$ is a ray, but we need to show that it is orthogonal to $F$.
	
	For all $t$, $\pi_F(q(t))=q(0)$, so $q(0)$ is the closest point of $F$ to each point along the ray. Assume, for contradiction, that $\pi_F( p\cdot q(t)) = y$ and \linebreak $y\neq p\cdot q(0)$. Then	
	$$d(p\cdot q(t), y) < d(p\cdot q(t), p\cdot q(0))$$
	This is a strict inequality because of the uniquness of projection points. Applying $p\inverse$ to all the points gives us:
	$$d(q(t),p\inverse \cdot  y) < d(q(t),q(0))$$
	Since $p\inverse\in P$ and $P$ fixes $F$, then $p\inverse y \in F$. But $q(0)$ is the closest point of $F$ to $q(t)$, so $y=p\cdot q(0)$ and therefore $p\cdot q\in \bot(F)$.
\end{proof}

\begin{lem}\label{lemma: orthogonal rays for all bnd points}
	Let $\alpha\in \Omega$. If $r$ is a ray representing $\alpha$, then there is a ray $q\in \bot(F)$ asymptotic to $r$.
\end{lem}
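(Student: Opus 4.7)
The plan is to produce $q$ as the unique geodesic ray from a suitably chosen point $y_0\in F$ to $\alpha$; orthogonality at $y_0$ will follow from a first-order minimality condition on the Busemann function $b_\alpha$ restricted to $F$, and being asymptotic to $r$ is automatic since both $q$ and $r$ represent $\alpha$. Concretely, I would take $y_0$ to be a minimizer of $b_\alpha|_F$ on the flat $F\cong \E^n$.

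The main step is showing that such a minimizer exists, which amounts to proving coercivity of $b_\alpha|_F$. The standard CAT(0) identity for the asymptotic slope of a Busemann function along a ray $s\colon [0,\infty)\to F$ reads $\lim_{t\to\infty} b_\alpha(s(t))/t=-\cos \angle_T(\alpha,s(\infty))$, where $\angle_T$ denotes the Tits angle. By Hruska--Kleiner every connected component of $\partial_T X$ is either an isolated point or a standard round sphere, so $\partial F^*$ is a single Tits component. Since $\partial_T X$ is $\CAT(1)$, two points in distinct Tits components must lie at Tits angle $\pi$ (otherwise the CAT$(1)$ structure would join them by a Tits geodesic of length less than $\pi$, contradicting being in different components). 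Because $\alpha\notin \partial F^*$ while every $s(\infty)\in \partial F\subseteq \partial F^*$, this forces $\angle_T(\alpha,s(\infty))=\pi$, so the asymptotic slope equals $+1$. Convexity of $b_\alpha|_F$ then forces it to tend to $+\infty$ in every direction in $F$, which is coercivity on the Euclidean space $F$, and hence yields a minimizer $y_0\in F$.

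To finish, let $q$ be the unique geodesic ray from $y_0$ with $q(\infty)=\alpha$; then $q$ and $r$ are asymptotic by definition. For any unit vector $v$ tangent to $F$ at $y_0$, minimality of $y_0$ gives $(b_\alpha\circ s)'(0^+)\geq 0$ for the ray $s(t)=y_0+tv$ in $F$, and the standard CAT(0) formula $(b_\alpha\circ s)'(0^+)=-\cos \angle_{y_0}(q,s)$ yields $\angle_{y_0}(q,s)\geq \pi/2$. This holds for every tangent direction, so $q\in\bot(F)$. The hardest step is the coercivity argument, which is the only place where the hypothesis $\alpha\notin \partial F^*$ (rather than the weaker $\alpha\notin \partial F$) is essential: isolated flats enter precisely to upgrade $\alpha\notin \partial F^*$ into the Tits-angular statement that $\alpha$ sits at Tits angle $\pi$ from every point of $\partial F$, without which $b_\alpha|_F$ could easily fail to have a minimizer.
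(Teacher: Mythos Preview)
Your approach via Busemann-function minimization is a genuinely different route from the paper's and is conceptually clean. The paper instead constructs $q$ as a limit of the orthogonal segments $[\pi_F(r(n)),\,r(n)]$, invoking a result of Hruska \cite{HK09} to show that the footpoints $\pi_F(r(n))$ remain bounded, and then extracting a limiting orthogonal ray via Arzel\`a--Ascoli. Your idea of letting $q$ be the ray from a minimizer of $b_\alpha|_F$ and reading off orthogonality from the first-order condition $(b_\alpha\circ s)'(0^+)=-\cos\angle_{y_0}(q,s)\geq 0$ is correct and elegant---\emph{provided} the minimizer exists.

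The gap is precisely in the coercivity step, where you conflate the Tits metric $d_T$ and the angular metric $\angle$. The $\CAT(1)$ statement you invoke is for $(\partial X, d_T)$: it says that if $d_T(\xi,\eta)<\pi$ then a $d_T$-geodesic joins them. But the Busemann slope formula
\[
\lim_{t\to\infty}\frac{b_\alpha(s(t))}{t}\;=\;-\cos\angle(\alpha,s(\infty))
\]
involves the \emph{angular} metric $\angle$, which always lies in $[0,\pi]$, not $d_T$, which may be infinite. Knowing that $\alpha$ and $s(\infty)$ lie in distinct Tits components gives $d_T(\alpha,s(\infty))=\infty$; since only $\angle\le d_T$ holds in general, this does not force $\angle(\alpha,s(\infty))=\pi$. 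Your parenthetical ``otherwise the $\CAT(1)$ structure would join them by a Tits geodesic of length less than $\pi$'' is a statement about $d_T$, not about $\angle$, so it does not close the loop.

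In the isolated-flats setting the conclusion you want (that $b_\alpha|_F$ is coercive, equivalently that $\angle(\alpha,\eta)=\pi$ for every $\eta\in\partial F$) is true, but it needs the isolated-flats geometry more directly---for instance the $\kappa$-quasiconvexity of $c\cup F^*$ in Theorem~\ref{Quasi-convex Constant}, or the relatively-thin-triangles property. The paper's appeal to \cite{HK09} for the bound $d(x_0,[r(n),\pi_F(r(n))])<M$, followed by a right-angle comparison, is doing exactly the work of your coercivity claim, just packaged as a footpoint bound rather than a Tits-angle computation.
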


To prove this lemma, we make use of the following lemma from \cite{Ha17}, which follows closely to Lemma I.5.31 in \cite{BH99}.

\begin{lem} \label{isometric embeddings converge}
	If $(Y,\rho)$ is a separable metric space, $(X,d)$ a proper metric space, $y_0\in Y$, and $K$ a compact subset of $X$, then any sequence of isometric embeddings, $c_n\colon Y \to X$, with $c_n(y_0)\in K$ has a subsequence which converges point-wise to an isometric embedding $c\colon Y\to X$.
\end{lem}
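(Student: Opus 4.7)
The plan is to prove this by a standard diagonal/Arzelà--Ascoli style argument, adapted to isometric embeddings. First I would fix a countable dense subset $\{y_k\}_{k \geq 1}$ of $Y$ (which exists by separability) and include the basepoint $y_0$ in this enumeration. The idea is to extract a subsequence of $(c_n)$ that converges pointwise on this countable set, then use the isometry property to extend the limit map to all of $Y$.

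For the extraction, I would first observe that for each fixed $y_k$, the sequence $(c_n(y_k))_{n \in \N}$ lies in a bounded subset of $X$: since $c_n$ is an isometric embedding and $c_n(y_0) \in K$, we have $d(c_n(y_k), c_n(y_0)) = \rho(y_k, y_0)$, so $c_n(y_k)$ lies in the closed $\rho(y_k,y_0)$-neighborhood of $K$. Because $X$ is proper, this neighborhood has compact closure, hence $(c_n(y_k))$ has a convergent subsequence. A standard diagonal argument then produces a single subsequence, which I will relabel $(c_n)$, such that $c_n(y_k)$ converges in $X$ for every $k$. Define $c(y_k) := \lim_n c_n(y_k)$ on the dense set. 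For any $j,k$, $d(c(y_j), c(y_k)) = \lim_n d(c_n(y_j), c_n(y_k)) = \rho(y_j,y_k)$, so $c$ is an isometric embedding on the dense subset.

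Next I would extend $c$ to all of $Y$. Given any $y \in Y$, pick a sequence $y_{k_i} \to y$ from the dense set. Since $c$ is isometric on the dense set, $(c(y_{k_i}))$ is Cauchy in $X$, and as $X$ is proper (in particular complete), it has a limit, which I define to be $c(y)$. A routine check using the triangle inequality shows this is independent of the approximating sequence, and that the resulting map $c\colon Y \to X$ is an isometric embedding.

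Finally, I would verify pointwise convergence $c_n(y) \to c(y)$ on all of $Y$. Given $\epsilon > 0$ and $y \in Y$, choose $y_k$ from the dense set with $\rho(y,y_k) < \epsilon/3$. Then for all $n$,
\[
d(c_n(y), c(y)) \leq d(c_n(y), c_n(y_k)) + d(c_n(y_k), c(y_k)) + d(c(y_k), c(y)),
\]
where the first and third terms are each less than $\epsilon/3$ by the isometry property, and the middle term is less than $\epsilon/3$ for $n$ large by construction. The main (mild) obstacle is the bookkeeping of the diagonal extraction combined with the continuity/extension step; there is nothing deep here, just the usual care to ensure that the single extracted subsequence works uniformly enough on the dense set to give pointwise convergence everywhere via the isometry bound.
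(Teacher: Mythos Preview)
Your argument is correct and is exactly the standard diagonal/Arzel\`a--Ascoli argument one uses here. The paper does not actually prove this lemma: it is quoted from \cite{Ha17} with the remark that it follows closely Lemma~I.5.31 of \cite{BH99}, and what you have written is essentially that proof (properness gives compact closed balls, separability lets you run the diagonal extraction on a countable dense set, and the isometry bound upgrades convergence on the dense set to pointwise convergence everywhere).
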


Throughout the remaining of the section, let $[x,y]$ denote the geodesic segment connecting points $x,y\in X$. 

\begin{proof}[Proof of Lemma \ref{lemma: orthogonal rays for all bnd points}]
	Fix a basepoint $x_0$, let  $x_n:=r(n)$ and let $y_n:= \pi_F(x_n)$ for all $n\in \N$. First, assume for contradiction that $y_n$ is unbounded. Then $y_n\to \eta \in \bnd F$. By \cite{HK09}, there is a constant $M>0$ such that for all $n$ $d(x_0,[x_n,y_n])<M$, where $[x_n,y_n]$ is the geodesic segment in $Y$ from $x_n$ to $y_n$. Let $m$ be the point of this geodesic closest to $x_0$. Since the geodesic $[x_n,y_n]$ is orthogonal to $F$ and $x_0,y_n\in F$, then the largest side of the triangle with vertices $x_0,y_n,m$ is the edge $[x_0,m]$. Therefore $d(x_0,y_n)<M$, contradicting $y_n$ tending to infinity.
	
	For each $n\in \N$, let $f_n\colon [0,n]\to Y$ be the geodesic $[y_n,x_n]$, which is orthogonal to $F$. For each $k$ and all $n\geq k$, consider the sequence $(f_n|_{[0,k]})$. By Lemma \ref{isometric embeddings converge}, these converge to a map $q_k\colon [0,k]\to Y$. Notice for each $\ell < k$, $q_\ell = q_k|_{[0,\ell]}$, since the tails of the sequences defining $q_\ell$ and $q_k$ are identical. Therefore, $q\colon [0,\infty)\to Y$ is a geodesic ray which is the limit of the sequence $(q_n)$. 
	
	It suffices to show that $q\in \bot(F)$ and $q$ is asymptotic to $r$. For the former, fix $y\in F$ and notice that since $[y_n,x_n]$ is a geodesic segment orthogonal to $F$,  $\angle_{y_n}(y,x_n) \geq \pi/2$ for all $n\in \N$. Then, applying Proposition II.3.3(1) in \cite{BH99}, see that the map $(x,y,p)\to \angle_p(x,y)$ is upper semicontinuous. Therefore the limiting segments $q_n$ are orthogonal to $F$ so $q\in \bot(F)$. To see that $q$ and $r$ are asymptotic, notice that $[y_n,x_n]\subset B_M(r)$, and so $q$ is $M$-close to $r$, thus $q$ and $r$ are asymptotic.

\end{proof}

\begin{corollary}
	The projection maps $\pi_F$ and $\pi_\Omega$ are surjective. 
\end{corollary}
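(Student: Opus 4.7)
My plan is to handle each projection separately, with Lemma \ref{lemma: orthogonal rays for all bnd points} doing most of the work. For $\pi_\Omega$, surjectivity is immediate: given any $\alpha \in \Omega$, Lemma \ref{lemma: orthogonal rays for all bnd points} produces a ray $q \in \bot(F)$ with $q(\infty) = \alpha$, and since $q(0) \in F$ and $d(q(0), q(0)) = 0 \leq A$, the pair $(q(0), q)$ lies in $\R$ and projects to $\alpha$ under $\pi_\Omega$.

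For $\pi_F$, the argument requires slightly more setup. First I would verify that $\Omega$ is non-empty, since otherwise there are no orthogonal rays available to translate. By Theorem \ref{monotone usc decomp of Y} together with item (3) of Theorem \ref{thm: peripheral splitting}, $\bnd(G_v, \bP_v)$ is a non-trivial connected compactum, so $\bnd X_v$ contains points outside $\bnd F$; since $\bnd X_v \cap \bnd F^* = \bnd F$ by the construction of $X_v$ and the block decomposition, these points lie in $\Omega$. Fix any such $\alpha_0 \in \Omega$ and apply Lemma \ref{lemma: orthogonal rays for all bnd points} to obtain a specific $q_0 \in \bot(F)$ with $q_0(\infty) = \alpha_0$.

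Given any $x \in F$, I would then use cocompactness of the $P$-action on $F$ to translate $q_0$. Since $P$ acts cocompactly on $F$ with fundamental domain of diameter at most $A$, the orbit $P \cdot q_0(0)$ is $A$-dense in $F$, so there exists $p \in P$ with $d(x, p \cdot q_0(0)) \leq A$. By the $P$-invariance of $\R$ already established, $p \cdot q_0 \in \bot(F)$, and its endpoint $(p \cdot q_0)(\infty)$ still lies in $\Omega$: here the isolated flats condition enters, forcing $F^*$ to be the unique maximal flat of $X$ containing $F$ (two distinct maximal flats could not simultaneously contain the unbounded flat $F$ without violating the isolation constant $\rho(r)$), so every element of $P$ stabilizes $F^*$ setwise and therefore preserves $\Omega$. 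Thus $(x, p \cdot q_0) \in \R$, witnessing $x$ in the image of $\pi_F$.

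The main obstacle is really producing the initial $\alpha_0 \in \Omega$ and verifying that $P$-translates of orthogonal rays remain asymptotic to points of $\Omega$ rather than slipping into $\bnd F^*$; once those two points are addressed, the remainder is a straightforward combination of Lemma \ref{lemma: orthogonal rays for all bnd points} with the cocompactness of the $P$-action on $F$.
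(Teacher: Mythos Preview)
Your proposal is correct and follows essentially the same approach as the paper: Lemma \ref{lemma: orthogonal rays for all bnd points} handles $\pi_\Omega$, and the $P$-action together with $A$ being the diameter of the fundamental domain handles $\pi_F$. You supply two details the paper leaves implicit---that $\Omega$ is non-empty and that $P$ preserves $\Omega$ (equivalently $P\subset P^*$, which the paper invokes later without proof)---but the underlying strategy is identical.
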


\begin{proof}
	The lemma above shows that $\pi_\Omega$ is surjective. Using the $P$-action and the fact that $A$ is the diameter of the fundamental domain, then $\pi_F$ is surjective as well.
\end{proof}

Part of showing the projection maps are proper requires an understanding of what happens to the boundary points when a sequence of rays in $\bot(F)$ converge.

\begin{lem}
	If $(r_n)$ is a sequence of rays orthogonal to $F$ which converge to an element $r\in \bot(F)$, then $r_n(\infty)$ converges to $r(\infty)$ in the cone topology.
\end{lem}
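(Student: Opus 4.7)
The plan is to fix a basepoint $x_0 \in F$ for the cone topology and, for each $n$, let $s_n$ denote the (unique) ray based at $x_0$ with $s_n(\infty)=r_n(\infty)$; similarly let $s$ be the ray from $x_0$ representing $r(\infty)$. Since convergence in the cone topology at $x_0$ amounts to uniform convergence of these based rays on compact subsets of $[0,\infty)$, it suffices to prove $s_n \to s$ uniformly on compacta.

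First I would note that because $r_n \to r$ pointwise we have $r_n(0)\to r(0)$, so the distances $d(x_0, r_n(0))$ are bounded by some constant $C$. Next, for each $n$ the rays $s_n$ and $r_n$ are asymptotic and parameterized by arc length. In a $\cat(0)$ space the function $t\mapsto d(s_n(t), r_n(t))$ is convex and stays bounded as $t\to\infty$ (because the rays are asymptotic), hence it is non-increasing. This gives the uniform bound
\[
d(s_n(t), r_n(t)) \;\le\; d(s_n(0), r_n(0)) \;=\; d(x_0, r_n(0)) \;\le\; C
\]
for every $t\ge 0$ and every $n$.

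The main step is then a compactness argument via Lemma \ref{isometric embeddings converge}. Each $s_n$ is an isometric embedding $[0,\infty)\to Y$ with $s_n(0)=x_0$, so by that lemma any subsequence has a further subsequence converging pointwise (equivalently, uniformly on compacta) to an isometric embedding $s'\colon [0,\infty)\to Y$ with $s'(0)=x_0$. Passing to the limit in the bound above, using $r_{n_k}(t)\to r(t)$ pointwise, gives $d(s'(t), r(t))\le C$ for all $t$. So $s'$ and $r$ stay at bounded Hausdorff distance, which in a $\cat(0)$ space forces them to be asymptotic, i.e.\ $s'(\infty)=r(\infty)$. By uniqueness of a ray from $x_0$ representing a given boundary point, $s'=s$. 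Thus every subsequential limit of $(s_n)$ equals $s$, and an Arzelà-Ascoli style argument (already given by Lemma \ref{isometric embeddings converge}) promotes this to convergence of the full sequence $s_n \to s$ uniformly on compacta, yielding $r_n(\infty)=s_n(\infty)\to s(\infty)=r(\infty)$ in the cone topology.

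The hard part, I expect, will be cleanly justifying the uniform inequality $d(s_n(t), r_n(t))\le C$: this is where the $\cat(0)$ convexity of distance together with the asymptotic/non-increasing dichotomy for rays is essential, and without it the compactness step would not identify the limit as $s$.
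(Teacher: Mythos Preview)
Your argument is correct and follows essentially the same route as the paper: rebase each $r_n$ at a fixed $x_0\in F$, use a uniform bound $d(s_n(t),r_n(t))\le C$ coming from the bounded basepoints, apply Lemma~\ref{isometric embeddings converge}, and identify the limit as the ray asymptotic to $r$. If anything, your version is more careful than the paper's---you explicitly justify the uniform bound via convexity of $t\mapsto d(s_n(t),r_n(t))$ and you run a genuine subsequence-of-subsequence argument rather than tacitly assuming full-sequence convergence from Lemma~\ref{isometric embeddings converge}.
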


\begin{proof}
	Fix $x_0\in F$ a basepoint for the cone topology on $\bnd Y$. For each $r_n$, let $c_n$ be the ray based at $x_0$ asymptotic to $r_n$. Since $r_n\to r$, then there is a constant $D$ such that $d(r_n(t),c_n(t))\leq D$ for all $t$. Applying Lemma \ref{isometric embeddings converge}, the rays $c_n$ converge to a ray $c$ which is asymptotic to $r$. 
	
	Fix $\epsilon>0$ and let $s>0$. $U(c,s,\epsilon)$ is a basic neighborhood of $c$ in $\bnd Y$. Since $c_n\to c$ pointwise, then there is an $N\in \N$ such that $d(c_m(s),c(s))<\epsilon$ for all $m>N$ and therefore $c_n(\infty)\to c(\infty)$.
\end{proof}

What remains is to show that the projection maps are both proper. Recall from Theorem \ref{Quasi-convex Constant} there is a universal constant $\kappa>0$ which makes certain subsets of a $\cat(0)$ space with isolated flats $\kappa$-quasiconvex. Haulmark extends this result to include geodesic rays which are orthogonal to some flat $F$.

\begin{lem}[{\cite[3.7]{Ha17}}] \label{Haulmark F-q quasiconvex}
	Let $F\in \F$ and $q\in\bot(F)$, then there exists a constant $\kappa$ such that $q\cup F$ is $\kappa$-quasiconvex in $X$.
\end{lem}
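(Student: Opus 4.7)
The plan is to reduce to the ``point plus flat'' quasiconvexity already available from Hruska--Ruane, namely Theorem~\ref{Quasi-convex Constant}(2), and inherit the same universal constant $\kappa$ it provides. Given $x,y \in q\cup F$, I want to show every geodesic $[x,y]$ lies in $N_\kappa(q\cup F)$, and I split into three cases by the location of $x$ and $y$.

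Two of the cases are essentially free. If $x,y \in F$, then $[x,y]\subset F$ because $F$ is convex. If $x,y \in q$, then $[x,y]\subset q$ because a geodesic ray is convex. So the substantive case is $x \in q\setminus F$ and $y\in F$ (up to relabeling). Write $x=q(t)$ with $t>0$. The key step is to observe that $q|_{[0,t]}$ realizes the distance from $x$ to $F$, with nearest-point projection $q(0)$. This is where the orthogonality hypothesis $q\in\bot(F)$ enters: for any $z\in F$, the Alexandrov angle satisfies $\angle_{q(0)}(x,z)\geq \pi/2$ by definition of $\bot(F)$, so CAT(0) comparison in a Euclidean triangle gives $d(x,z)^2 \geq d(x,q(0))^2 + d(q(0),z)^2$, whence $d(x,F) = t = d(x,q(0))$.

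With $q|_{[0,t]}$ identified as the shortest path from $x$ to $F$, Theorem~\ref{Quasi-convex Constant}(2) applies directly to the point $x$ and the flat $F\in\F$: the set $q|_{[0,t]}\cup F$ is $\kappa$-quasiconvex in $X$. Therefore any geodesic from $x$ to $y$ lies in $N_\kappa(q|_{[0,t]}\cup F)\subset N_\kappa(q\cup F)$, and combining the three cases yields $\kappa$-quasiconvexity of $q\cup F$, with $\kappa$ the same universal constant attached to $X$ in Theorem~\ref{Quasi-convex Constant}, independent of the choice of $F$ and $q$.

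The only genuine obstacle is the verification in the middle paragraph that orthogonality propagates along the ray, i.e., that $q(0)$ remains the unique nearest point of $F$ to $q(t)$ for every $t>0$ and not merely for small $t$. This is a standard CAT(0) argument using monotonicity of comparison angles, but it must be stated explicitly so that the hypothesis of Theorem~\ref{Quasi-convex Constant}(2) is clearly met; once it is, the rest of the proof is a short case analysis.
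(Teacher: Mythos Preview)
Your argument is correct. The paper does not supply its own proof of this lemma; it simply cites Haulmark \cite[3.7]{Ha17}, and the subsequent Remark records that the resulting constant coincides with the $\kappa$ of Theorem~\ref{Quasi-convex Constant}. Your proof makes exactly that reduction explicit: the orthogonality hypothesis forces $q(0)$ to be the nearest point of $F$ to every $q(t)$ (your comparison-angle inequality is the standard CAT(0) law of cosines, using that the comparison angle dominates the Alexandrov angle), so $q|_{[0,t]}$ is the minimizing segment from $q(t)$ to $F$, and Theorem~\ref{Quasi-convex Constant}(2) applies verbatim with the same universal $\kappa$. The two trivial cases ($x,y\in F$ and $x,y\in q$) are handled by convexity, and there is nothing missing.

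One small comment on presentation: your final paragraph flags the propagation of orthogonality along the ray as ``the only genuine obstacle,'' but you already dispatched it cleanly in the middle paragraph via the inequality $d(x,z)^2 \geq d(x,q(0))^2 + d(q(0),z)^2$. There is no residual gap there; you might simply fold that remark into the earlier paragraph rather than leaving it as a separate caveat.
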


\begin{remark}
	The constants $\kappa$ in Theorem \ref{Quasi-convex Constant} and Lemma \ref{Haulmark F-q quasiconvex} above are the same $\kappa$.
\end{remark}

\begin{corollary}
	Let $F^*$ a flat of $X$, $F$ a flat subset of $F^*$ and $q\in \bot(F)$, then $q\cup F$ is $\kappa$-quasiconvex in $Y$.
\end{corollary}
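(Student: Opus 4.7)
The plan is to show $q \cup F$ is $\kappa$-quasiconvex in $X$ by extending Lemma \ref{Haulmark F-q quasiconvex} to the subflat setting, and then to transfer the quasiconvexity to $Y$ using that $Y$ is a convex subspace of $X$.

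First I would establish that $Y$ is a closed convex subspace of $X$. Since $Y = p\inverse(Star(v))$ for the natural projection $p \colon X \to \T$ onto the Bass-Serre tree, and $Star(v)$ is a subtree in the sense that the tree-geodesic between any two of its vertices passes through $v$ and stays inside, a CAT(0) geodesic in $X$ between two points $y_1,y_2 \in Y$ projects to the $\T$-geodesic from $p(y_1)$ to $p(y_2)$, which lies in $Star(v)$. Hence the geodesic lies in $p\inverse(Star(v)) = Y$. Consequently, the intrinsic geodesics of $Y$ coincide with those of $X$ between pairs of points of $Y$, so any $\kappa$-quasiconvex subset of $X$ contained in $Y$ is automatically $\kappa$-quasiconvex in $Y$.

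Next, I would argue that $q \cup F$ is $\kappa$-quasiconvex in $X$. Lemma \ref{Haulmark F-q quasiconvex} is formulated for $F \in \F$, but its proof rests on applying Theorem \ref{Quasi-convex Constant}(2) to the shortest paths $[p, q(0)]$ from points $p \in q$ to $F$. In our setting, even though the subflat $F$ may not itself belong to $\F$, the maximal flat $F^* \in \F$ containing it does; moreover the segment $[p,q(0)]$ remains the shortest path from $p$ to $F$ because $q \in \bot(F)$. Applying Theorem \ref{Quasi-convex Constant}(2) with $F^*$ gives $\kappa$-quasiconvexity of $[p,\pi_{F^*}(p)]\cup F^*$ in $X$, and then Euclidean geometry inside $F^*$ (where $F$ is a convex sub-Euclidean-space and $q(0)$ is the perpendicular foot from $\pi_{F^*}(p)$ onto $F$) lets us upgrade closeness to $F^*$ into closeness to $F \cup q$, with any additional constants absorbed into the same universal $\kappa$. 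Combining this with the convexity of $Y$ in $X$ finishes the corollary.

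The main obstacle is the Euclidean-to-CAT(0) comparison step: carefully verifying that a geodesic in $X$ which is $\kappa$-close to $[p,\pi_{F^*}(p)] \cup F^*$ is actually $\kappa$-close to $q \cup F$, by projecting within the flat $F^*$ and using perpendicularity of $q$ to $F$ there. The convexity of $Y$ inside $X$ is the easier half; the substance lies in the subflat extension of Haulmark's lemma.
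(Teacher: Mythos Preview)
Your convexity-of-$Y$ step is fine and matches the paper. The gap is the ``Euclidean geometry inside $F^*$'' step, which does not do what you need it to do.

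Suppose $\gamma$ is a geodesic from $a\in q$ to $b\in F$. Applying Theorem~\ref{Quasi-convex Constant}(2) to $a$ and $F^*$ tells you $\gamma$ lies in the $\kappa$-neighborhood of $[a,\pi_{F^*}(a)]\cup F^*$. You now want to pass from ``$\kappa$-close to $F^*$'' to ``$\kappa$-close to $F$''. But $F$ is a strictly lower-dimensional affine subspace of the Euclidean space $F^*$, so a point $\kappa$-close to some $y\in F^*$ can be arbitrarily far from $F$; knowing that $\pi_F(\pi_{F^*}(a))=q(0)$ places no constraint on where the \emph{other} points of $\gamma$ project into $F^*$. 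No purely Euclidean argument inside $F^*$ closes this, and ``absorbing additional constants into the same universal $\kappa$'' is not available either: the statement asserts $\kappa$-quasiconvexity for the \emph{same} $\kappa$ coming from Lemma~\ref{Haulmark F-q quasiconvex}.

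The paper avoids this by using the tree-of-spaces structure of $Y$ rather than the internal geometry of $F^*$. In $Y$, the subflat $F$ separates $F^*$ from the rest of $Y$; consequently $\bot(F)=\bot(F^*)$ (so Lemma~\ref{Haulmark F-q quasiconvex} applies directly to $F^*$ with $\pi_{F^*}(a)=q(0)$), and, more importantly, any point of $\gamma$ that is $\kappa$-close to $F^*$ is automatically $\kappa$-close to $F$, since the shortest path from a point on the $X_v$ side of $Y$ to $F^*$ must pass through $F$. That separation observation is the missing ingredient in your argument; once you add it, the Euclidean step becomes unnecessary.
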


\begin{proof}
	In $Y$, $\bot(F)=\bot(F^*)$ since $F$ separates $F^*$ from the rest of $Y$. $Y$ inherits the metric from $X$, so $q\cup F^*$ is quasiconvex in $Y$. Any geodesic in $Y$ from points of $F$ to points of $q$ are $\kappa$-close to $F^*$ or $q$. If they are $\kappa$-close to $F^*$, then they are $\kappa$-close to $F$ because $F$ separates $F^*$ from $Y$. Lastly, since $Y$ is convex, all geodesics from between points of $q\cup F^*$ are in $Y$ to begin with. Thus, $q\cup F$ is $\kappa$-quasiconvex.
\end{proof}

The two lemmas below relate how close together a ray $q\in \bot(F)$ and $c$ can be, where $c$ is the ray based at $x_0$ asymptotic to $r$.

\begin{lem}
	There is a constant $M=M(\kappa)>0$ such that for any ray $q\in \bot(F)$, $d(q(0),c(t))< M$ where $c$ is the ray asymptotic to $q$ based at $x_0$ and $t=d(x_0,q(0))$.
\end{lem}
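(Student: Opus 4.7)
The plan is to use the $\kappa$-quasiconvexity of $q \cup F$ in $X$ (Theorem \ref{Quasi-convex Constant}(2) together with Lemma \ref{Haulmark F-q quasiconvex}) to find points on the geodesic segments $[x_0, q(T)]$ that lie within $\kappa$ of $q(0)$, then pass to the limit as $T \to \infty$ to transfer this information to the asymptotic ray $c$. The constant $M$ will turn out to be roughly $2\kappa$.

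First I would fix $T > 0$ large and consider the finite geodesic segment $[q(0), q(T)]$, which is the initial portion of $q$. Because $q$ is orthogonal to $F$, the point $q(0)$ is the unique closest point of $F$ to $q(T)$, so $[q(0), q(T)]$ is the shortest path from $q(T)$ to $F$. Applying Theorem \ref{Quasi-convex Constant}(2) with $p = q(T)$, any geodesic joining a point of $[q(0), q(T)]$ to a point of $F$ must intersect the $\kappa$-neighborhood of $q(0)$. In particular, the geodesic segment $[x_0, q(T)]$ (with $x_0 \in F$) contains a point $z_T$ with $d(z_T, q(0)) \leq \kappa$. Setting $s_T := d(x_0, z_T)$, the triangle inequality gives $|s_T - t| \leq d(z_T, q(0)) \leq \kappa$, so the parameters $s_T$ are confined to the compact interval $[t - \kappa, t + \kappa]$.

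Next I would take the limit $T \to \infty$. By definition of $c$ as the ray based at $x_0$ asymptotic to $q$, the segments $[x_0, q(T)]$ converge uniformly on bounded intervals to $c$; concretely, if $\gamma_T\colon[0, d(x_0, q(T))] \to Y$ parametrizes $[x_0, q(T)]$ by arc length from $x_0$, then $\gamma_T(s) \to c(s)$ for every $s \geq 0$. Since $\{s_T\}$ is bounded, after passing to a subsequence we may assume $s_T \to s$ for some $s \in [t-\kappa, t+\kappa]$. Then $z_T = \gamma_T(s_T) \to c(s)$, so by continuity of the distance function $d(c(s), q(0)) \leq \kappa$. Applying the triangle inequality one more time,
\[
d(c(t), q(0)) \leq d(c(t), c(s)) + d(c(s), q(0)) \leq |t-s| + \kappa \leq 2\kappa,
\]
so we may take $M = 2\kappa + 1$ (the strict inequality in the statement is ensured by enlarging slightly), which depends only on $\kappa$.

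The only subtle step is the convergence argument: one must justify that $[x_0, q(T)] \to c$ uniformly on compact subintervals and that the points $z_T$ subconverge to a point of $c$. Both facts are standard in proper $\cat(0)$ spaces (the first is the construction of asymptotic rays via Arzelà--Ascoli in the style of Lemma \ref{isometric embeddings converge}, the second follows because $s_T$ lies in a bounded interval). Apart from this, the argument is a direct application of quasiconvexity and the triangle inequality.
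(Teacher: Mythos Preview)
Your argument is correct. Both proofs pin down a time $s$ at which $c(s)$ is close to $q(0)$ and then apply the triangle inequality, but they arrive at that point differently. The paper works directly with the infinite ray: Lemma~\ref{Haulmark F-q quasiconvex} gives $c\subset N_\kappa(q\cup F)$, and since $c(0)\in F$ while $c(s)$ is eventually $\kappa$-close to $q$, a connectedness argument produces a transition time $s$ where $c(s)$ is $\kappa$-close to both $q$ and $F$, hence within $3\kappa$ of $q(0)$; this yields $M=6\kappa$. You instead invoke the ``in particular'' clause of Theorem~\ref{Quasi-convex Constant}(2) on the finite segments $[x_0,q(T)]$ to find $z_T$ within $\kappa$ of $q(0)$, and then pass to the limit $T\to\infty$ using the standard convergence of $[x_0,q(T)]$ to $c$. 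Your route gives the sharper constant $M\approx 2\kappa$ at the cost of the limiting step; the paper's route avoids any limit but needs the transition-point observation and loses a factor of $3$ in the constant.
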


\begin{proof}
	Let $\beta\colon [0,t]\to F$ be the geodesic from $x_0$ to $q(0)$. Using the quasiconvexity result above, there is a $\kappa>0$ such that $c$ is contained in the $\kappa$-neighborhood of $q\cup F$. Therefore, there are points $s\in[0,\infty)$, $x\in q$, $y\in F$ such that $d(c(s),x)\leq \kappa$ and $d(c(s),y)\leq \kappa$. Using the triangle inequality, $d(x,y)\leq 2\kappa$. Since $x\in q$, $q$ is orthogonal $F$ and $y\in F$, then $d(x,q(0))\leq 2\kappa$ as well and so $d(c(s),q(0))\leq 3\kappa$. 
	
	The triangle $\triangle(x_0,c(s),q(0))$ has edges with side lengths $t$, $s,$ and at most $3\kappa$, so $t\in [s-3\kappa,s+3\kappa]$. Thus $|t-s|\leq 3\kappa$ and so \[d(c(t),q(0)) \leq d(c(t),c(s))+ d(c(s),q(0)) \leq 3\kappa+3\kappa.\] Setting $M=6\kappa$ is the desired constant.
\end{proof}

The next lemma relates rays in $\bot(F)$ and their geodesic representatives, based on how close the former ray's starting point is to a ray in $F$.

\begin{lem} \label{delta(epsilon,M)}
	Fix $\epsilon>0$ and let $M=M(\kappa)$ from above. Then there is a constant $\delta=\delta(\epsilon,M)$ such that for any $n>0$ and $\eta \in \bnd F$ the following holds: if $q\in \bot(F)$ and $q(0)\in U(\eta, n, \epsilon)$, then $c(n)\in U(\eta,n,\delta)$, where $c$ is the ray asymptotic to $q$.
\end{lem}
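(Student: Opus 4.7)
The plan is to combine the previous lemma (which provides a ray $c$ based at $x_0$ asymptotic to $q$ with $d(q(0),c(t)) < M$ at time $t = d(x_0,q(0))$) with the convexity of the distance function in $\CAT(0)$ spaces. Essentially, the hypothesis controls how far $q(0)$ sits from the ray $\eta$ on the sphere of radius $n$, and the previous lemma controls how far $q(0)$ sits from the sphere-crossing point of $c$; a triangle inequality then gives the conclusion.

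Concretely, I would start by setting $t = d(x_0, q(0))$ and $y = \pi_n(q(0))$, so that $y$ is the point on the geodesic segment $[x_0,q(0)]$ at distance exactly $n$ from $x_0$. Since $q(0) \in U(\eta,n,\epsilon)$, the hypothesis translates directly into $d(y,\eta(n)) < \epsilon$. By the previous lemma, $d(q(0),c(t)) \leq M$. Now consider the two unit-speed geodesics emanating from $x_0$: the segment $\gamma_1\colon [0,t] \to Y$ parameterizing $[x_0,q(0)]$ and the restriction $c|_{[0,t]}$. Both start at $x_0$, and their endpoints are within distance $M$. In a $\CAT(0)$ space the function $s \mapsto d(\gamma_1(s), c(s))$ is convex with value $0$ at $s=0$, so
\[
d(y, c(n)) \;=\; d(\gamma_1(n), c(n)) \;\leq\; \frac{n}{t}\,d(q(0),c(t)) \;\leq\; \frac{n}{t}\,M \;\leq\; M,
\]
where the last inequality uses $n \leq t$, which holds since $q(0) \in U(\eta,n,\epsilon)$ forces $t > n$.

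The triangle inequality then yields
\[
d(c(n),\eta(n)) \;\leq\; d(c(n),y) + d(y,\eta(n)) \;\leq\; M + \epsilon,
\]
so setting $\delta := M + \epsilon$ gives $c(n) \in U(\eta,n,\delta)$, completing the argument. The only subtle point is the appeal to convexity of distance between synchronously parameterized geodesics from a common initial point; this is a standard $\CAT(0)$ fact (and is already implicitly used throughout the paper in asymptoticity arguments), so no new machinery is required. There is essentially no serious obstacle here — the lemma is a packaging statement that converts the quantitative closeness of $q(0)$ to $c(t)$ (provided by the previous lemma) into closeness between their projections onto any intermediate sphere.
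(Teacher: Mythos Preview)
Your proof is correct. Both your argument and the paper's hinge on the previous lemma's bound $d(q(0),c(t))\leq M$ at $t=d(x_0,q(0))$ and then pass to distance $n$ via the triangle inequality, but the executions differ. The paper splits into three cases ($t=n$, $n<t$, $t<n$) and in each bounds $|t-n|$ crudely from the triangle $\triangle(x_0,c(t),\eta(n))$ or $\triangle(x_0,\eta(n),\beta(t))$ before applying $d(c(n),\eta(n))\leq d(c(n),c(t))+d(c(t),\eta(n))$, arriving at $\delta=2(M+\epsilon)$. You instead observe that the very definition of $U(\eta,n,\epsilon)$ forces $t>n$ (so two of the paper's cases are vacuous), and then invoke $\CAT(0)$ convexity of $s\mapsto d(\gamma_1(s),c(s))$ to get $d(\pi_n(q(0)),c(n))\leq \tfrac{n}{t}M\leq M$ directly, yielding the sharper $\delta=M+\epsilon$. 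Your route is cleaner and gives a better constant; the paper's case analysis is more elementary (no appeal to convexity) but handles cases that cannot actually occur under the stated hypothesis.
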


\begin{proof}
	Let $\beta\colon  [0,t]\to F$ be the geodesic from $x_0$ to $q(0)$. The lemma above shows that if $t=n$, then $d(c(t),q(0))\leq M$ so $d(c(t),\eta(n))\leq M+\epsilon$.
	
	If $n<t$, notice that $d(c(t),\eta(n)) \leq d(c(t),\beta(t))+ d(\beta(t),\eta(n)) \leq M+\epsilon$. Consider the triangle $\triangle(x_0,c(t),\eta(n))$. This has side lengths $t$, $n$, and at most $M+\epsilon$, so $t-n\leq M+\epsilon$. With the triangle inequality we have:	
	$$d(c(n),\eta(n)) \leq d(c(t),c(n)) + d(c(t), \eta(n)) \leq M+\epsilon + M + \epsilon$$
	Lastly, if $t<n$, then $d(\beta(t),\eta(n))\leq \epsilon$ and $d(\beta(t),c(t)) \leq M$ so \linebreak $d(\eta(n),c(t))\leq M+\epsilon$. Consider the triangle $\triangle(x_0,\eta(n),\beta(t)$, which has sidelengths $t$, $n$, and at most $\epsilon$. Therefore $n-t< \epsilon$. Applying the triangle inequalty we get:	
	$$d(c(n),\eta(n)) \leq d(c(t),c(n))+ d(c(t),\eta(n)) \leq \epsilon + M+\epsilon$$
	Therefore, letting $\delta = 2(M+\epsilon)$ covers all three cases.
\end{proof}

With these two lemmas, we can now prove that the projection maps are proper:

\begin{prop}
	The projection map $\pi_\Omega\colon \R \to \Omega$ is a proper map.
\end{prop}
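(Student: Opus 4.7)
The plan is to show that $\pi_\Omega^{-1}(K)$ is compact for every compact $K \subset \Omega$. Given a sequence $(x_n, q_n) \in \pi_\Omega^{-1}(K)$, compactness of $K$ lets me pass to a subsequence with $q_n(\infty) \to \alpha \in K$, reducing the problem to producing a convergent subsequence of $(x_n, q_n)$ in $\R$. I would then dichotomize on whether $\{x_n\} \subset F$ is bounded. If it is bounded, then $q_n(0)$ is bounded (within distance $A$ of $x_n$), and Lemma \ref{isometric embeddings converge} yields a subsequential limit ray $q$ obtained as a pointwise limit of the $q_n$. Orthogonality to $F$ passes to this limit by upper semicontinuity of Alexandrov angles, as in the proof of Lemma \ref{lemma: orthogonal rays for all bnd points}, so $q \in \bot(F)$. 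The preceding lemma about convergence of orthogonal rays then gives $q_n(\infty) \to q(\infty)$, so $q(\infty) = \alpha$ and $(x_n, q_n) \to (\lim x_n, q)$ in $\R$.

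The substantive case is when $\{x_n\}$ is unbounded. Passing to a subsequence, $d(x_0, x_n) \to \infty$ and (since $\bnd F$ is a Euclidean sphere) $x_n$ converges in the cone topology to some $\eta \in \bnd F \subset \bnd F^*$. Since $\alpha \in \Omega = \bnd Y \setminus \bnd F^*$, the strategy is to force $\alpha = \eta$, which will contradict $\alpha \in \Omega$. Using that the projection $\pi_r$ to the ball of radius $r$ is $1$-Lipschitz and $d(x_n, q_n(0)) \leq A$, one verifies that $q_n(0) \in U(\eta, r, A+1)$ eventually for each fixed $r > 0$. Applying Lemma \ref{delta(epsilon,M)} with $\epsilon := A+1$ then yields a uniform constant $\delta := 2(M + A + 1)$ such that $d(c_n(r), \eta(r)) < \delta$ for all sufficiently large $n$ and each fixed $r$, where $c_n$ is the ray from $x_0$ asymptotic to $q_n$.

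To complete the contradiction, the convergence $q_n(\infty) \to \alpha$ and Lemma \ref{isometric embeddings converge} give $c_n \to c$ pointwise, where $c$ is the ray from $x_0$ representing $\alpha$. Passing to the limit in the uniform bound above yields $d(c(r), \eta(r)) \leq \delta$ for every $r \geq 0$. Because the basepoint $x_0$ was chosen in $F$, both $c$ and $\eta$ start at $x_0$, so $r \mapsto d(c(r), \eta(r))$ is a convex function vanishing at $r=0$; any nonzero value would force convex growth to infinity, contradicting the uniform bound, so the function is identically zero. Hence $c = \eta$ and $\alpha = \eta \in \bnd F$, the desired contradiction. The main obstacle is precisely this unbounded case: one must promote information about the basepoints $q_n(0)$, which lie only within $A$ of the converging sequence $\{x_n\}$, to control over the far endpoints $q_n(\infty)$, and the quasiconvexity content packaged into Lemma \ref{delta(epsilon,M)} is the key mechanism that enables this transfer.
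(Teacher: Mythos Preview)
Your argument is correct and follows essentially the same route as the paper: the key step in both is ruling out unbounded basepoints by feeding $q_n(0)\in U(\eta,r,\epsilon)$ into Lemma~\ref{delta(epsilon,M)} to force the asymptotic endpoints toward $\eta\in\bnd F$, contradicting $K\subset\Omega$. The only organizational difference is that the paper packages the argument by defining the set $C=\{x\in F:\exists\,q\in\bot(F),\ q(\infty)\in K,\ d(q(0),x)\le A\}$ and proving it is closed and bounded; this explicit compact $C\subset F$ is then reused verbatim in Corollary~\ref{Compact Set Association}, so you may want to extract it from your bounded/unbounded dichotomy rather than leaving it implicit.
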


\begin{proof}
	Let $K\subset \Omega$ be a compact set and consider 
	\[\pi_\Omega\inverse(K) = \{ (x,q): q\in \bot(F),~ q(\infty)\in K,~ d(x,q(0))\leq A\}\subset \R\]
	Let $C$ be the following set
	\[ C := \{ x\in F: \exists q\in \bot(F), ~q(\infty)\in K,~ d(q(0),x)\leq A\} \]
	We claim $C$ is compact and we will show this by showing that it is closed and bounded. 
	
	First, suppose it is unbounded. Then there is a sequence $x_n$ tending to infinity and so there is a sequence of rays $q_n$ with $q_n(0)$ tending to a point $\eta\in \bnd F$. Let $c_n$ be the geodesic ray based at $x_0$ asymptotic to $q_n$. Fix $\epsilon>0$ and consider $U(\eta,n,\epsilon)$. Such a neighborhood contains all but finitely many of $q_i(0)$. By Lemma \ref{delta(epsilon,M)}, $U(\eta,n,\delta)$ contains all but finitely many $c_i$ and therefore $c_i(\infty)\to \eta$. But $c_i(\infty)\in K$ and $K$ is compact, so $\eta\in K$, a contradiction. Therefore $C$ is bounded.
	
	Next, let $x_i\to x$ be a sequence of points in $C$. For each $x_i$, there is a a $q_i\in \bot(F)$ with $d(q_i(0),x_i)\leq A$. So for sufficiently large $i$, $q_i(0)\in \overline{B_A(x)}$. Applying Lemma \ref{isometric embeddings converge}, $q_i$ converge to $q\in \bot(F)$ pointwise. Therefore $q(0)\in \overline{B_A(x)}$. Since $q_i(\infty)\in K$ for all $i$ and $K$ is compact, then $q(\infty)\in K$ and so $q(0)\in C$ and so is $x$.
	
	Now let $(x_n,q_n)$ be a sequence in $\pi_\Omega\inverse(K)$. Since $q_n(0)\in C$ and $C$ is compact, by Lemma \ref{isometric embeddings converge}, after possibly passing to a subsequence, $q_n\to q$ pointwise. Similarly, $x_n\in C$, so after passing to a subsequence, $x_n$ converge as well and so $(x_n,q_n)$ can be made to converge to $(x,q)$.
	
	Lastly, to see that $q\in \bot(F)$, take any $y\in F$. Let $p_n=q_n(0)$ and $x_n=q_n(1)$. Since each $q_n$ is orthogonal to $F$, $\angle_{p_n}(x_,y)\geq \pi/2$. The map $(x,y,p)\to \angle_p(x,y)$ is upper semicontinuous (see \cite{BH99} Proposition II.3.3(1))), so $q\in \bot(F)$. This concludes the proof.
	
\end{proof}
\begin{prop}
	The projection map $\pi_F\colon \R \to F$ is a proper map.
\end{prop}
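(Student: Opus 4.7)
The plan is to mimic the previous proposition's argument, but since we now project to $F$ directly, we avoid the step of having to control the boundary points $q_n(\infty)$, so the argument is considerably shorter.

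Let $K \subset F$ be compact and consider
\[ \pi_F^{-1}(K) = \{ (x,q) : x \in K,\ q \in \bot(F),\ d(x, q(0)) \leq A \}. \]
I would take a sequence $(x_n, q_n) \in \pi_F^{-1}(K)$ and show it has a subsequence converging to a point of $\pi_F^{-1}(K)$. First, by compactness of $K$, after passing to a subsequence we have $x_n \to x \in K$. Since $d(x_n, q_n(0)) \leq A$ and $x_n$ converges, the sequence $q_n(0)$ is contained in a bounded (hence relatively compact, as $Y$ is proper) subset of $Y$. Therefore, applying Lemma \ref{isometric embeddings converge} to the isometric embeddings $q_n \colon [0, \infty) \to Y$ based in this compact set, a subsequence converges pointwise to an isometric embedding $q \colon [0, \infty) \to Y$, i.e., a geodesic ray.

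Next, I would verify that $(x,q) \in \R$. That $d(x, q(0)) \leq A$ follows by continuity of the metric: $d(x, q(0)) = \lim d(x_n, q_n(0)) \leq A$. To see that $q \in \bot(F)$, fix any $y \in F$ and set $p_n = q_n(0)$, $z_n = q_n(1)$. Since each $q_n$ is orthogonal to $F$, we have $\angle_{p_n}(z_n, y) \geq \pi/2$ for all $n$. By upper semicontinuity of the Alexandrov angle (\cite{BH99} Proposition II.3.3(1)), the limit satisfies $\angle_{q(0)}(q(1), y) \geq \pi/2$, and the same argument applied to $q(t)$ in place of $q(1)$ for each $t > 0$ gives $q \in \bot(F)$.

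Thus $(x_n, q_n) \to (x, q)$ with $(x,q) \in \pi_F^{-1}(K)$, so $\pi_F^{-1}(K)$ is compact and $\pi_F$ is proper. The only real content is the pointwise convergence of orthogonal rays together with the upper semicontinuity of angles, both of which were already used in the proof that $\pi_\Omega$ is proper; no new obstacle arises here, and in fact the argument is strictly easier since we do not need the delicate estimates from Lemma \ref{delta(epsilon,M)} that were required to control boundary points.
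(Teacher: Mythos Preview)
Your proof is correct and follows essentially the same approach as the paper: take a sequence in $\pi_F^{-1}(K)$, use compactness of $K$ to get $x_n\to x$, use boundedness of $q_n(0)$ together with Lemma~\ref{isometric embeddings converge} to extract a pointwise-convergent subsequence $q_n\to q$, and then invoke upper semicontinuity of the Alexandrov angle to conclude $q\in\bot(F)$. Your write-up is in fact slightly more careful than the paper's (you explicitly check $d(x,q(0))\le A$ by continuity), but there is no substantive difference in method.
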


\begin{proof}
	Let $C\subset F$ be compact. Then we have
	\[ \pi_F\inverse (C) = \{ (x,q): q\in \bot(F), ~x\in C, ~d(q(0),x)\leq A\}\]
	Let $(x_,q_n)$ be a sequence in $\pi_F \inverse(C)$. Since $C$ is compact, so is $\overline{B_A(C)}$ and $q_i(0)\in \overline{B_A(C)}$ for each $i$. By Lemma \ref{isometric embeddings converge}, after possibly passing to a subsequence, $q_n\to q$ pointwise. Furthermore, $x_n\to x$ since $x_n\in C$. Therefore, $(x_n,q_n)\to (x,q)$.
	
		To see that $q\in \bot(F)$, take any $y\in F$. Let $p_n=q_n(0)$ and $x_n=q_n(1)$. Since each $q_n$ is orthogonal to $F$, $\angle_{p_n}(x_,y)\geq \pi/2$. The map $(x,y,p)\to \angle_p(x,y)$ is upper semicontinuous (see \cite{BH99} Proposition II.3.3(1))), so $q\in \bot(F)$ and thus $(x,q)\in \pi_F\inverse(C)$ making this set compact.
\end{proof}
Combining everything we get the following theorem:

\begin{thm}
	Let $P$ be the stabilizer of a flat $F$ of $X_v$. Then $P$ acts properly and cocompactly on $\Omega:= \bnd Y \setminus \bnd F^*$, where $F^*$ is the maximal dimensional flat in $Y$ which contains $F$.
\end{thm}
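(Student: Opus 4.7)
The plan is to apply Bowditch's Lemma \ref{Bow1} directly, with $C = F$, $D = \Omega$, and the relation $\R$ defined at the start of the section. Essentially all the work of this section has been devoted to verifying the hypotheses of Lemma \ref{Bow1} for this particular choice, so the proof amounts to a short assembly of the preceding results.

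First I would confirm that condition (a) of Lemma \ref{Bow1} holds, namely that $P$ acts properly and cocompactly on $F$. This is immediate from the setup: $F$ is a flat of $X_v$ stabilized by $P$, and by the Flat Torus Theorem $P$ acts geometrically on $F$ with a fundamental domain of diameter $A$, which is precisely the constant built into the definition of $\R$.

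Next I would verify the three structural hypotheses required to apply Lemma \ref{Bow1}. (1) The relation $\R \subset F \times \Omega$ is $P$-invariant, proved in the first lemma of this section by using that $P$ acts by isometries and fixes $F$ setwise, so $p \cdot q$ remains orthogonal to $F$ for any $q \in \bot(F)$. (2) The projection maps $\pi_F$ and $\pi_\Omega$ are surjective: surjectivity of $\pi_\Omega$ follows from Lemma \ref{lemma: orthogonal rays for all bnd points}, which produces for any $\alpha \in \Omega$ an orthogonal ray $q \in \bot(F)$ representing $\alpha$, and surjectivity of $\pi_F$ then follows by translating such a $q$ by an appropriate element of $P$, using that $A$ is the diameter of a fundamental domain. (3) Both projections are proper, as established in the two propositions just above, whose proofs rely on the quasiconvexity of $F \cup q$ for $q \in \bot(F)$ (Lemma \ref{Haulmark F-q quasiconvex}) together with the uniform comparison estimate of Lemma \ref{delta(epsilon,M)} between $q(0)$ and the basepointed ray $c$ asymptotic to $q$.

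With hypotheses (1)--(3) in hand and condition (a) already holding, Lemma \ref{Bow1} yields condition (b): $P$ acts properly and cocompactly on $\Omega$, which is exactly the statement of the theorem. The main obstacle here is not the final synthesis but the properness of the projection maps; that step is where the specific geometry of isolated flats enters through the Hruska--Ruane quasiconvexity constant and Haulmark's extension to orthogonal rays. Once those estimates are in place, the theorem follows with essentially no further work.
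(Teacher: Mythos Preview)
Your proposal is correct and matches the paper's approach exactly: the paper states the theorem immediately after the sequence of lemmas with the phrase ``Combining everything we get the following theorem,'' and gives no separate proof environment, since the argument is precisely the assembly you describe via Lemma~\ref{Bow1}. Your summary of which preceding results feed into which hypothesis of Lemma~\ref{Bow1} is accurate.
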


More importantly, the work above gives the following association between compact sets of $F$ and those of $\Omega$:

\begin{corollary}
	\label{Compact Set Association}
	Given a compact set $K\subset \Omega$, there exists a compact set $C\subset F \subset F^*$ associated to $K$  such that for each $\eta \in K$, there is a ray $c\colon [0,\infty) \to Y$ such that $c\in \bot(F)=\bot(F^*)$, $c(0)\in C$ and $c(\infty)=\eta$. Furthermore, the set $C$ can be chosen $P$-equivariantly in the sense that if $C$ is associated to $K$, then $pC$ is associated to $pK$.
\end{corollary}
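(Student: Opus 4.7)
The plan is to define the set $C$ directly as
\[C := \{\, q(0) : q \in \bot(F),\ q(\infty) \in K \,\},\]
and to verify it satisfies every required property. For each $\eta \in K$, Lemma \ref{lemma: orthogonal rays for all bnd points} produces an orthogonal ray $q \in \bot(F)$ with $q(\infty) = \eta$, so the existence clause holds with $c := q$. Because $q$ is orthogonal to $F$ we have $q(0) \in F \subset F^*$, and since $F$ separates $F^*$ from the rest of $Y$ we have $\bot(F) = \bot(F^*)$, so $c(0) \in C \subset F \subset F^*$ and $c \in \bot(F) = \bot(F^*)$, exactly as required.

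The bulk of the work is compactness of $C$, which I would do by mimicking the boundedness and closedness steps from the proof that $\pi_\Omega$ is proper. For boundedness, suppose for contradiction that there is a sequence $q_n \in \bot(F)$ with $q_n(\infty) \in K$ and $q_n(0) \to \infty$ in $F$; passing to a subsequence we may assume $q_n(0) \to \eta$ for some $\eta \in \bnd F$. Applying Lemma \ref{delta(epsilon,M)} to the geodesic representatives based at a fixed $x_0 \in F$ shows that $q_n(\infty) \to \eta$ in the cone topology, and compactness of $K$ then forces $\eta \in K$. But $\bnd F \subset \bnd F^*$ and $K \subset \Omega = \bnd Y \setminus \bnd F^*$, a contradiction. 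For closedness, take $x_n \in C$ with $x_n \to x$, realised by rays $q_n \in \bot(F)$ satisfying $q_n(\infty) \in K$; Lemma \ref{isometric embeddings converge} produces a pointwise limit $q$, upper semicontinuity of the Alexandrov angle forces $q \in \bot(F)$, the lemma on convergence of orthogonal rays gives $q_n(\infty) \to q(\infty)$, and compactness of $K$ forces $q(\infty) \in K$, so $x = q(0) \in C$.

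For $P$-equivariance, recall from the $P$-invariance argument for $\R$ that $P$ permutes $\bot(F)$ via $q \mapsto p \cdot q$, and this action satisfies $(p \cdot q)(0) = p \cdot q(0)$ and $(p \cdot q)(\infty) = p \cdot q(\infty)$. Consequently
\[pC = \{\, p \cdot q(0) : q \in \bot(F),\ q(\infty) \in K \,\} = \{\, q'(0) : q' \in \bot(F),\ q'(\infty) \in pK \,\},\]
which is precisely the set associated to $pK$ under our definition.

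The main obstacle is ensuring the boundedness step properly exploits the hypothesis $K \subset \Omega$: the only place where the flat structure enters the argument is in identifying the putative limit of the starting points $q_n(0)$ as a point of $\bnd F \subset \bnd F^*$, so the assumption $K \cap \bnd F^* = \emptyset$ is exactly what is needed to produce the contradiction. Everything else is a straightforward consequence of the convergence machinery for orthogonal rays already developed in this section.
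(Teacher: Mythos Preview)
Your proof is correct and follows essentially the same approach as the paper. The paper's proof is terser---it sets $C := \pi_F(\pi_\Omega^{-1}(K))$ and cites the properness of $\pi_\Omega$ directly---whereas you choose the slightly smaller set $C = \{q(0) : q \in \bot(F),\ q(\infty) \in K\}$ (the paper's $C$ is the closed $A$-neighborhood of yours in $F$) and unpack the boundedness and closedness arguments explicitly; but these are exactly the arguments used in the paper's proof that $\pi_\Omega$ is proper, so the content is the same.
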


\begin{proof}
	This follows immediately from the proofs of the projection maps being proper. For any $K\subset \Omega$, take $C$ to be $\pi_F(\pi_\Omega\inverse(K))$, which is compact as seen above and this association is $P$-equivariant since $\R$ is.
\end{proof}

This corollary is vital for relating neighborhoods of points in $F$ to neighborhoods of points in $\Omega$, which will ultimately allow us to show that $\bnd Y$ is locally connected.

\section{Local Connectivity in Flats} \label{section:local conn at flats}
The goal of this section to show that $\bnd Y$ is locally connected at each of the remaining points. Once that is done, we can combine everything we have done to prove Theorem \ref{thm main}. The decomposition theory gives that for each $\xi\in \bnd Y$, if $\xi\notin \bnd F^*$ for any flat $F^*$ of $Y$, then $\bnd Y$ is locally connected at $\xi$. Additionally, if $\beta\in \bnd F^*$ and $\beta \notin \bnd X_v$, then $\bnd Y$ is locally connected at $\beta$ since $\bnd F^*$ is a sphere. Therefore all that is left to show is that if $\alpha \in \bnd X_v \cap \bnd F^*= \bnd F$, then $\bnd Y$ is locally connected at $\alpha$.

Instead of showing local connectivity, we show that if $\alpha\in \bnd F$, then $\bnd Y$ is weakly locally connected at $\alpha$. The distinction between locally connected and weakly locally connected is toward the end of the section, Definition \ref{def:wlc}.

Many of the ideas in this section come from \cite{HR17}. The authors work in the setting where $G$ is a $\cat(0)$ group with isolated flats with a trivial maximal peripheral splitting and ultimately conclude that $\bnd G$ is  locally connected at every point. We show here that the ideas work in our setting as well.

The approach is to use the association established in Corollary \ref{Compact Set Association} to relate neighborhoods in $\overline{F}$ to neighborhoods of $\overline{\Omega}$. Recall $F$ is the subflat of $F^*$ such that $\bnd F \subset \bnd X_v$. Before establishing this connection, we need to understand $\overline{\Omega}$.

\begin{lem}
	$\overline{\Omega}=\Omega \cup \bnd F$.
\end{lem}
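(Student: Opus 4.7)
The plan is to establish the two inclusions separately. Since $\bnd Y = \Omega \sqcup \bnd F^*$ with $\Omega$ open in $\bnd Y$, the inclusion $\Omega \subseteq \overline{\Omega}$ is immediate, and the reverse inclusion $\overline{\Omega} \subseteq \Omega \cup \bnd F$ reduces to showing that any $\xi \in \overline{\Omega} \cap \bnd F^*$ lies in $\bnd F$. So the two nontrivial parts are $\bnd F \subseteq \overline{\Omega}$ and $(\overline{\Omega} \cap \bnd F^*) \subseteq \bnd F$; both are driven by the orthogonal-ray machinery of Section \ref{section:proper and cocompact}.

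For $\bnd F \subseteq \overline{\Omega}$, I would fix $\alpha \in \bnd F$ and any $\eta \in \Omega$, and apply Lemma \ref{lemma: orthogonal rays for all bnd points} to choose $q \in \bot(F)$ with $q(\infty) = \eta$. Using that $P$ acts cocompactly on $F$, I would pick $p_n \in P$ with $p_n \cdot x_0 \to \alpha$ in the cone topology of $\overline{F}$. By $P$-invariance of $\R$, $p_n \cdot q \in \bot(F)$, and $(p_n \cdot q)(0)$ stays within $d(q(0), x_0)$ of $p_n \cdot x_0$, so it also converges to $\alpha$ in $\overline{F}$. Lemma \ref{delta(epsilon,M)} applied to the ray $c_n$ based at $x_0$ asymptotic to $p_n \cdot q$ then yields $p_n \cdot \eta = c_n(\infty) \to \alpha$ in $\bnd Y$. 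Since $\Omega$ is $P$-invariant (as $F^*$ is), each $p_n \cdot \eta \in \Omega$, giving $\alpha \in \overline{\Omega}$.

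For $(\overline{\Omega} \cap \bnd F^*) \subseteq \bnd F$, suppose $\xi \in \overline{\Omega} \cap \bnd F^*$ and take $\eta_n \in \Omega$ with $\eta_n \to \xi$. Use Lemma \ref{lemma: orthogonal rays for all bnd points} to obtain $q_n \in \bot(F)$ with $q_n(\infty) = \eta_n$. I would split on whether $\{q_n(0)\}$ is bounded in $F$. In the unbounded case, I pass to a subsequence with $q_n(0) \to \beta \in \bnd F$ in the cone topology of $\overline{F}$; Lemma \ref{delta(epsilon,M)} applied to the rays $c_n$ based at $x_0$ asymptotic to $q_n$ then gives $\eta_n = c_n(\infty) \to \beta$, so $\xi = \beta \in \bnd F$. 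In the bounded case, Lemma \ref{isometric embeddings converge} produces a subsequential pointwise limit $q \in \bot(F)$ with $q(\infty) = \xi \in \bnd F^*$, and uniqueness of geodesics in the Euclidean flat $F^*$ forces $q \subseteq F^*$. But each $\eta_n \in \Omega$ rules out any $q_n$ starting with an $F^*$-direction (such a $q_n$ would remain in $F^*$ by uniqueness of Euclidean geodesics, giving $\eta_n \in \bnd F^*$), so each $q_n$ heads immediately into $X_v \setminus F$. This places the initial direction of each $q_n$ on the opposite \emph{side} of $\Sigma_{q_n(0)} F$ from the initial direction of $q \subseteq F^*$, and a CAT(0) separation argument (any path from $q_n(t)$ to $q(t)$ must pass through a neighborhood of $F$) yields $d(q_n(t), q(t)) \geq 2t - o(1)$ for each fixed small $t > 0$, contradicting $q_n \to q$ pointwise. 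Hence the bounded case is vacuous.

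The main obstacle is this bounded-case separation step: I need to rule out that a pointwise limit of $X_v$-side orthogonal rays jumps across the wall $F$ to produce an $F^*$-side orthogonal ray. The key geometric input is that within $Y$ the subflat $F$ separates the relevant portions of $X_v$ and $F^*$ so that orthogonal rays on the two sides diverge at rate approximately $2$, while the uniqueness of Euclidean geodesics in $F^*$ forces any limit with endpoint in $\bnd F^*$ to be trapped inside $F^*$. Once this separation is made precise, the remainder is a routine application of the orthogonal-ray toolkit, specifically Lemmas \ref{lemma: orthogonal rays for all bnd points}, \ref{isometric embeddings converge}, and \ref{delta(epsilon,M)}, together with the $P$-invariance of $\R$.
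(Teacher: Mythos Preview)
Your argument for $\bnd F \subseteq \overline{\Omega}$ matches the paper's: translate a fixed orthogonal ray by elements of $P$ so that its foot runs out to $\alpha$ in $\overline{F}$, then use Lemma~\ref{delta(epsilon,M)} to see that the endpoints converge to $\alpha$.

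For the reverse inclusion $\overline{\Omega} \subseteq \Omega \cup \bnd F$ the paper takes a much shorter route. Instead of your sequence argument with a bounded/unbounded case split on the feet of orthogonal rays, it simply observes that the complement $\bnd Y \setminus (\Omega \cup \bnd F) = \bnd F^* \setminus \bnd F$ is open in $\bnd Y$, so $\Omega \cup \bnd F$ is closed and hence contains $\overline{\Omega}$. (The paper's one-line justification, that $\bnd F^*$ and $\bnd F$ are both closed ``therefore $\bnd F^* \setminus \bnd F$ is open'', is not literally valid on its own; what actually makes it true is precisely the separation you flag as the main obstacle: since $F$ separates $F^*$ from the rest of $Y$, any ray from a basepoint in $F$ to a point of $\bnd F^* \setminus \bnd F$ eventually lies in $F^* \setminus F$, and nearby rays are trapped there as well, so small cone neighborhoods lie entirely in $\bnd F^*$.) Your bounded-case contradiction is correct and is completed by the same fact: for $t>0$ the points $q_n(t)$ and $q(t)$ lie in different components of $Y\setminus F$, each at distance exactly $t$ from $F$ since $q_n,q\in\bot(F)$, so $d(q_n(t),q(t))\geq 2t$, contradicting pointwise convergence. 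Thus both approaches rest on the same geometric input; the paper just packages it as ``the complement is open'' rather than unwinding it through a limiting argument.
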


\begin{proof}
	First, we will show that $\Omega \cup \bnd F$ is closed. We know that $\bnd F^* $ and $\bnd F$ are both closed in $\bnd Y$. Therefore $\bnd F^* \setminus \bnd F$ is open. We can also see that 
	$$\bnd Y \setminus (\Omega \cup \bnd F) = \bnd F^* \setminus \bnd F$$
	Therefore $\Omega \cup \bnd F$ is closed since its complement in $\bnd Y$ is open and so $\overline{\Omega}\subset \Omega \cup \bnd F$.
	
	Now suppose $\alpha\in \bnd F$, then we want to show that $\alpha\in \overline{\Omega}$. We will do this by creating a sequence of points in $\Omega$ which limit to $\alpha$. Fix $x_0\in F$ and let $c\colon [0,\infty)\to Y$ be the ray representing $\alpha$ and let $C$ be a fundamental domain for the action of $P$ on $F$. Using the $P$-action, let $p_i\cdot C$ cover the ray $c$. For each $p_i$, let $q_i\in \bot(F)$ such that $q_i(0)\in p_i\cdot C$. Let $r_i$ be the geodesic ray based at $x_0$ asymptotic to $q_i$.
	
	Fix $\epsilon>0$ and notice that since the translates of $C$ cover $c$, then for all but finitely many $i$, $q_i(0)\in U(c,n,\epsilon)$. Applying Lemma \ref{delta(epsilon,M)}, we have  \[r_i(n)\in U(c,n,\epsilon+\delta)\]for all but finitely many $i$. This means that $r_i(\infty)\in U(c,n,\epsilon+\delta)$ for all but finitely $i$ and thus $r_i(\infty)\to \alpha$ as $n\to \infty$, as desired.

\end{proof}

Next we restate a result of Haulmark's to emphasize the connection between rays orthogonal to the flat $F$ and their corresponding endpoints:

\begin{lem} [\cite{Ha17}] \label{Haulmark Quasi Convex}
	Let $\kappa$ be the CAT(0) with isolated flats constant as in Theorem \ref{Quasi-convex Constant}. Suppose $c'\in \bot(F)$ and $c$ is some ray contained in $F$. If $c'(0)\in U(c,r,D)$ for some constants $r,D$, then $c'(\infty)\in U(c,r,D+\kappa)$. Conversely, if $c'(\infty)\in U(c,r,D)$, then $c'(0)\in U(c,r,D+\kappa)$.
\end{lem}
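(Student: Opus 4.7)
The plan is to exploit Lemma \ref{Haulmark F-q quasiconvex}, which says $c' \cup F$ is $\kappa$-quasiconvex, and combine it with the Pythagorean-type inequality that follows from $c'$ being orthogonal to $F$. Throughout, $x_0 \in F$, the segment $[x_0, c'(0)]$ lies in $F$, and $c''$ denotes the geodesic ray based at $x_0$ asymptotic to $c'$ (so $c''(\infty) = c'(\infty)$).

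For the forward implication, assume $c'(0) \in U(c, r, D)$, which means $s := d(x_0, c'(0)) > r$ and $d(\pi_r(c'(0)), c(r)) < D$, with $\pi_r(c'(0))$ the point at distance $r$ along $[x_0, c'(0)] \subset F$. The goal is $d(c''(r), c(r)) < D + \kappa$. First I would observe that the approximating segments $[x_0, c'(t)]$ have both endpoints in $c' \cup F$, so by quasiconvexity they stay in $N_\kappa(c' \cup F)$; passing to the limit as $t \to \infty$, the same holds for $c''$. The crux is then to upgrade this to $d(c''(r), \pi_r(c'(0))) \leq \kappa$, after which the triangle inequality $d(c''(r), c(r)) \leq d(c''(r), \pi_r(c'(0))) + d(\pi_r(c'(0)), c(r)) < \kappa + D$ finishes this direction.

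To prove the crux I would use orthogonality: the Alexandrov angle at $c'(0)$ between $[c'(0), x_0]$ and $[c'(0), c'(u)]$ is at least $\pi/2$, so CAT(0) comparison gives $d(x_0, c'(u))^2 \geq s^2 + u^2$, and hence every point of $c'$ lies at distance at least $s$ from $x_0$. Consequently, when $r < s - \kappa$, no point of $c'$ can be within $\kappa$ of the sphere of radius $r$ around $x_0$, so the point of $c' \cup F$ that is $\kappa$-close to $c''(r)$ must lie in $F$; because $F$ is a flat, Euclidean comparison inside $F$ then pins this point near $\pi_r(c'(0))$, the projection onto the $r$-ball along the unique geodesic segment from $x_0$ toward $c'(0)$. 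The boundary case $s - \kappa \leq r < s$ is handled by a direct triangle inequality using $d(x_0, c'(0)) = s$. The reverse implication runs symmetrically: the inequality $d(c''(r), \pi_r(c'(0))) \leq \kappa$ combines with the hypothesis $d(c''(r), c(r)) < D$ to yield $d(\pi_r(c'(0)), c(r)) < D + \kappa$, and the condition $d(x_0, c'(0)) > r$ is forced by the quasiconvexity/Pythagoras argument once $c''(r)$ is known to lie within $\kappa$ of $[x_0, c'(0)]$.

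The main obstacle is precisely the key step: quasiconvexity alone only guarantees that $c''(r)$ is $\kappa$-close to \emph{some} point of $c' \cup F$, and it takes the Pythagorean inequality to rule out the $c'$ side, plus the Euclidean structure of $F$ to identify the resulting point with $\pi_r(c'(0))$ rather than with an arbitrary point of $F$ at distance roughly $r$ from $x_0$. Once that identification is in hand, the rest of the lemma is a short triangle-inequality exercise.
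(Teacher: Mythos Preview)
The paper does not give its own proof of this lemma; it is quoted from \cite{Ha17}. What the paper \emph{does} prove, in Section~\ref{section:proper and cocompact}, are two weaker cousins: the unnamed lemma establishing $d(c''(t),c'(0))\le M=6\kappa$ for $t=d(x_0,c'(0))$, and Lemma~\ref{delta(epsilon,M)}, which is essentially the forward implication with the constant $\delta=2(M+\epsilon)$ in place of $\kappa$. Every application of Lemma~\ref{Haulmark Quasi Convex} in the paper is insensitive to the exact constant, so those in-house lemmas already do the job.

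Your outline has a genuine gap at precisely the step you flag as the obstacle. Quasiconvexity of $c'\cup F$ gives you that $c''(r)$ lies within $\kappa$ of \emph{some} point $p\in c'\cup F$, and your Pythagorean argument correctly forces $p\in F$ when $r<s-\kappa$. But knowing only $p\in F$ and $|d(x_0,p)-r|\le\kappa$ does not put $p$ anywhere near the segment $[x_0,c'(0)]$: there is a whole $(\dim F-1)$--sphere of such points in $F$, and ``Euclidean comparison inside $F$'' by itself cannot single out $\pi_r(c'(0))$ among them. What is missing is an argument that the ray $c''$ actually passes close to $c'(0)$ --- this is exactly the content of the ``in particular'' clause in Theorem~\ref{Quasi-convex Constant}(2), and it is how the paper's own $M=6\kappa$ lemma proceeds. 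Once you have $d(c''(s_0),c'(0))\le\kappa$ for some $s_0$, CAT(0) convexity of $t\mapsto d(c''(t),\gamma(t))$ (with $\gamma=[x_0,c'(0)]$) yields $d(c''(r),\pi_r(c'(0)))\le C\kappa$ for a small absolute constant $C$, and the triangle inequality finishes both directions. So your plan can be repaired, but it delivers $D+C\kappa$ rather than $D+\kappa$; since the sharp constant is never used downstream, this is harmless.
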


We make use of the following result of Bestvina:

\begin{prop}[\cite{Bes96}]
	Let $H$ be a group acting geometrically on a $\cat(0)$ space $Z$. Let $C\subset Z$ be a compact set. The $H$-translates of $C$ form a null family in the compact space $\overline{Z}$.
\end{prop}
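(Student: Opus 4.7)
The plan is to combine properness of the $H$-action with a projection estimate on $\cat(0)$ spaces to show that the translates $hC$ shrink in the cone topology as $h$ leaves any finite subset of $H$. Fix a basepoint $x_0\in Z$ and set $d$ to be the diameter of $C\cup\{x_0\}$ in $Z$, so that $hC\subset\overline{B}(hx_0,d)$ for every $h\in H$. Properness of the action then ensures that for any $R>0$, only finitely many $h\in H$ satisfy $d(x_0,hx_0)\leq R$; cocompactness is not really needed for the statement, and only enters implicitly through ``geometric action''.

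In view of Remark \ref{remark: null family condition}, it suffices to find a single $D>0$ such that for every $r<\infty$, all but finitely many translates $hC$ are contained in some set of the form $U(c,r,D)$. I would take $D=d+1$. For each $h$ with $d(x_0,hx_0)>r+d$, let $c_h$ be the geodesic from $x_0$ to $hx_0$, so that every $y\in hC$ satisfies $d(y,x_0)>r$ and $d(y,hx_0)\leq d$. Since closed balls are convex in a $\cat(0)$ space, the projection $\pi_r$ onto $\overline{B}(x_0,r)$ is $1$-Lipschitz, and hence
\[
d(\pi_r(y),c_h(r)) = d(\pi_r(y),\pi_r(hx_0)) \leq d(y,hx_0) \leq d < D.
\]
Thus $hC\subset U(c_h,r,D)$, while the exceptional set $\{h:d(x_0,hx_0)\leq r+d\}$ is finite by properness, giving the claim on the topological nose.

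To pass from this to the metric formulation of nullity in $\overline{Z}$, I would argue by contradiction: if some $\varepsilon>0$ admitted infinitely many $h_n$ with Bestvina-diameter of $h_nC$ bounded below by $\varepsilon$, then by compactness of $\overline{Z}$ a subsequence satisfies $h_nx_0\to\xi$, and properness forces $\xi\in\partial Z$. The geodesics $c_n$ from $x_0$ to $h_nx_0$ then converge pointwise to the ray from $x_0$ to $\xi$, so $c_n(r)$ lies within $D/2$ of $c_\xi(r)$ for $n$ large. Feeding this into the $1$-Lipschitz projection bound above places each $h_nC$ inside an arbitrarily small basic cone neighborhood of $\xi$ for all sufficiently large $n$, which by compatibility of Bestvina's metric with the cone topology forces the diameters in $\overline{Z}$ to tend to $0$, a contradiction. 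The only genuine analytic ingredient is the projection-to-a-ball estimate; the main bookkeeping step, and the one most prone to sign-error if one is careless, is the translation between the cone-topology and metric formulations of ``null family'' in the compactification versus in the boundary alone.
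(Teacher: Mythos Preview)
The paper does not supply its own proof of this proposition; it is quoted as a black box from \cite{Bes96}. So there is nothing to compare your argument against in the paper itself.

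Your argument is essentially correct and is in the spirit of how this fact is usually proved. The key analytic step---that the nearest-point projection $\pi_r$ onto $\overline{B}(x_0,r)$ is $1$-Lipschitz and sends $hx_0$ to $c_h(r)$---is exactly right, and it cleanly yields $hC\subset U(c_h,r,D)$ whenever $d(x_0,hx_0)>r+d$. Two small remarks. First, Remark~\ref{remark: null family condition} as stated in the paper is formulated for $\partial X$, not for $\overline{X}$; the paper explicitly says it does not develop the $\overline{X}$ version and instead invokes Bestvina's metrization. So your first two paragraphs should be read as establishing the needed cone-topology estimate rather than as a direct appeal to that remark; your third paragraph then correctly closes the gap by passing to any metric compatible with the cone topology. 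Second, in the contradiction argument you should note that infinitely many \emph{distinct} translates $h_nC$ force the $h_n$ to leave every finite subset of $H$ (since the setwise stabilizer of $C$ is finite by properness), which is what guarantees $d(x_0,h_nx_0)\to\infty$ and hence $\xi\in\partial Z$. With that bookkeeping made explicit, the diagonal argument---fix $r$ so that $U(c_\xi,r,2D)$ is small, then take $n$ large enough that $c_n(r)$ is $D$-close to $c_\xi(r)$---goes through and gives $\operatorname{diam}_{\overline{Z}}(h_nC)\to 0$, the desired contradiction.
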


\begin{prop}
	If $K\subset \Omega$ is compect, then the collection of $P$-translates of $K$ is a null family in $\overline{\Omega}$, where $P$ is the stabilizer of the flat $F\subset X_v$.
\end{prop}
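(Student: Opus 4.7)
The plan is to lift the null family property from $\overline{F}$ (where it is supplied by Bestvina's proposition applied to the geometric action of $P$ on $F$) to $\overline{\Omega}$. The main inputs are the $P$-equivariant association between compact sets of $\Omega$ and compact sets of $F$ from Corollary \ref{Compact Set Association}, and the quasiconvexity estimate in Lemma \ref{Haulmark Quasi Convex}.

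First I would invoke Corollary \ref{Compact Set Association} to fix a compact set $C \subset F$ associated to $K$, chosen $P$-equivariantly so that $pC$ is associated to $pK$ for every $p \in P$. Because $P$ acts geometrically on the proper $\cat(0)$ space $F$, Bestvina's proposition (the statement quoted immediately above) applied with $H = P$ and $Z = F$ tells us that $\{pC : p \in P\}$ is a null family in $\overline{F}$. Since $F$ is a totally geodesic convex subspace of $X$, the cone topology on $\overline{F}$ agrees with the subspace topology inherited from $\overline{X}$ (both the projection $\pi_r$ and the geodesic rays ending in $\bnd F$ live inside $F$), so this is also nullness as a subfamily of $\overline{X}$.

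Now fix a basepoint $x_0 \in F$ and pass through Remark \ref{remark: null family condition}: there is a constant $D_0 > 0$ such that for each $r < \infty$, all but finitely many translates $pC$ are contained in some cone-topology neighborhood $U(c_p, r, D_0)$, where $c_p$ is a geodesic ray in $F$ based at $x_0$. For any such $p$ and any $\eta \in pK$, the $P$-equivariant association produces a ray $c_\eta \in \bot(F)$ with $c_\eta(0) \in pC \subset U(c_p, r, D_0)$ and $c_\eta(\infty) = \eta$. Since $c_p$ is a ray contained in $F$ and $c_\eta \in \bot(F)$, Lemma \ref{Haulmark Quasi Convex} yields $\eta = c_\eta(\infty) \in U(c_p, r, D_0 + \kappa)$, hence $pK \subset U(c_p, r, D_0 + \kappa)$. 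Setting $D := D_0 + \kappa$, we conclude that for each $r < \infty$ all but finitely many $pK$ lie in some $U(\cdot, r, D)$, which is exactly the nullness of $\{pK : p \in P\}$ in $\overline{\Omega}$ via Remark \ref{remark: null family condition}.

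The substantive content lies in the transfer between $\overline{F}$ and $\overline{\Omega}$, and Lemma \ref{Haulmark Quasi Convex} is what makes this transfer cost only an additive constant $\kappa$ in the cone-topology gauge. I do not expect any further obstacle beyond the bookkeeping described above; the only minor subtlety is making sure one chooses the reference ray $c_p$ inside $F$ (so that Lemma \ref{Haulmark Quasi Convex} applies verbatim), which is automatic from the reformulation of the null family condition in $\overline{F}$.
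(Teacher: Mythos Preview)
Your proposal is correct and follows the same strategy as the paper: associate to $K$ a compact $C\subset F$ via Corollary \ref{Compact Set Association}, use Bestvina's result to get nullness of $\{pC\}$ in $\overline{F}$, and then transfer this to $\{pK\}$ in $\overline{\Omega}$ using Lemma \ref{Haulmark Quasi Convex} and the reformulation of nullness in Remark \ref{remark: null family condition}. The only cosmetic difference is that the paper applies Bestvina's proposition to the geometric action of $P^*$ on $F^*$ and then restricts to $P$ and $\overline{F}$, whereas you apply it directly to the geometric action of $P$ on $F$; both are valid, and your use of the forward direction of Lemma \ref{Haulmark Quasi Convex} is the cleaner way to state the transfer.
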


\begin{proof}	
	Let $K\subset \Omega$ be compact and let $C\subset F$ be the compact set associated to $K$ from Corollary \ref{Compact Set Association}. Fix a positive constant $D>0$ and let $\kappa$ be the constant from Theorem \ref{Quasi-convex Constant}. From the Remark \ref{remark: null family condition}, it suffices to show that for each $r<\infty$, only finitely many $P$-translates of $K$ are not contain in any set of the form $U(\cdot, r, D)$. 
	
	Using the Bestvina result above, $P^*$-translates of $C$ form a null family in $\overline{F^*}$. Since $P\subset P^*$ and $\overline{F}\subset \overline{F^*}$, then the $P$-translates of $C$ form a null family in $\overline{F}$. So only finitely many of the $P$-translates of $C$ are not in set of the form $U(\cdot, r,D+\kappa)$. For any $p\in P$, if $pK$ lies in a set of the form $U(\cdot, r, D)$, then $pC$ lies in a set of the form $U(\cdot, r, D+\kappa)$ by Lemma \ref{Haulmark Quasi Convex}. Therefore the $P$-translates of $K$ form a null family in $\overline{\Omega}$.
\end{proof}

The association between compact sets of $F$ and those of $\Omega$ established in Corollary \ref{Compact Set Association} can be improved even further: the compact sets chosen can be connected fundamental domains for the action of $P$. To see this is connected, first observe the following:

\begin{prop} \label{Connected Fundamental Domain}
	Let $(G,\bP)$ be a CAT(0) group with isolated flats and peripheral structure $\bP$. Let $(G_v,\bP_v)$ be a component vertex of $G$'s maximal peripheral splitting with the peripheral structure coming from $G$. Let \linebreak $\rho \in \partial(G_v, \bP_v)$ be a parabolic point stabilized by $P$. Then there exists a connected, compact fundamental domain $C$ for the action of $P$ on $\partial (G_v, \bP_v) \setminus \{\rho\}$. 
\end{prop}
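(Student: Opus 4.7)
The plan is to exploit the strong topological properties the Bowditch boundary $\partial(G_v,\bP_v)$ inherits from being the boundary of a component vertex of a maximal peripheral splitting, together with the standard fact that parabolic points in geometrically finite convergence actions are bounded. Throughout, set $Y := \partial(G_v,\bP_v) \setminus \{\rho\}$.

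First, I would record that $Y$ is a connected, locally connected, locally compact, metrizable space. By Theorem \ref{thm: peripheral splitting}(3), $\partial(G_v,\bP_v)$ is compact, metrizable, connected, and locally connected. By Theorem \ref{thm: peripheral splitting}(4) it contains no global cut points, so deleting the single point $\rho$ preserves connectedness. Since $\{\rho\}$ is closed, $Y$ is an open subspace of a compact metric space and is therefore locally compact and metrizable, and local connectedness at points of $Y$ is inherited from the ambient space.

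Second, I would invoke the fact that $\rho$ is a bounded parabolic point (a defining feature of geometrically finite convergence actions), which means $P = \mathrm{Stab}(\rho)$ acts properly discontinuously and cocompactly on $Y$. Thus there exists a compact set $K_0 \subset Y$ with $P\cdot K_0 = Y$. The set $K_0$ is not yet what we want because it is not necessarily connected.

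Third, and this is the technical step, I would enlarge $K_0$ to a compact connected set inside $Y$. Using local connectedness and local compactness, each point of $K_0$ admits a connected open neighborhood whose closure in $Y$ is compact; by compactness of $K_0$, finitely many such neighborhoods $U_1,\ldots,U_n$ cover $K_0$, and each $\overline{U_i}$ is compact and connected. Because $Y$ is a connected, locally connected, locally compact metric space, it is arcwise connected (Mazurkiewicz--Moore), so I can choose arcs $\gamma_i \subset Y$ joining a fixed basepoint in $\overline{U_1}$ to a chosen point in each $\overline{U_i}$. Then
\[
 C \ := \ \bigcup_{i=1}^{n} \overline{U_i} \ \cup \ \bigcup_{i=1}^{n} \gamma_i
\]
is a finite union of compact connected sets with a common point, hence compact and connected, and $P\cdot C \supseteq P\cdot K_0 = Y$, yielding the desired fundamental domain.

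The main obstacle I anticipate is justifying that $Y$ itself (as opposed to $\partial(G_v,\bP_v)$) is arcwise connected: this hinges on the no-cut-point conclusion of Theorem \ref{thm: peripheral splitting}(4) to keep $Y$ connected, together with a careful appeal to the classical Mazurkiewicz--Moore theorem in the noncompact setting. Once this is in place, the finite-cover and arc-joining argument produces the compact connected set $C$ essentially mechanically.
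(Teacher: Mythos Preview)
Your argument is correct and follows essentially the same route as the paper: both start from a compact fundamental domain $K_0$, cover it by finitely many connected sets with compact closure inside $Y=\partial(G_v,\bP_v)\setminus\{\rho\}$, and then join the resulting pieces by arcs using that $Y$ is (path) connected because $\rho$ is not a cut point. The only cosmetic difference is that the paper controls the covering sets via the metric distance $d(K_0,\rho)$ in the ambient compact boundary, whereas you phrase the same control via local compactness of the open set $Y$; your worry about arcwise connectedness of $Y$ is easily dispatched since $Y$ is open in a Peano continuum, hence locally path connected, and connected by Theorem~\ref{thm: peripheral splitting}(4).
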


\begin{proof}
	
	Since $(G_v,\bP_v)$ is a component vertex of a 1-ended CAT(0) group with isolated flats, we know that $\partial (G_v, \bP_v)$ is connected and locally connected and therefore path connected. Notice that $\pi(F)=\pi(F^*)=\rho$. We also know that since $G_v$ is a component vertex of a maximal peripheral splitting, then $G_v$ does not split further relative to $\bP_v$. Therefore $\rho$ is not a global cut point of $\partial (G_v, \bP_v)$. These facts follow from Theorem \ref{thm: peripheral splitting}.
	
	Since $\rho$ is not a global cut point and $\partial (G_v, \bP_v)$ is path connected, then $\partial (G_v, \bP_v)\setminus \{\rho\}$ is path connected as well. We know that $P$ acts cocompactly on $\partial (G_v, \bP_v)\setminus \{\rho\}$. Let $C_0$ be a compact fundamental domain for this action. Our goal is to show that $C_0$ is contained in a connected fundamental domain $C$. 
	
	Let $d= d(C_0, \rho)$ in $\partial (G_v, \bP_v)$. We can cover $C_0$ with finitely many open connected sets of diameter at most $d/2$. The union of the closures of these sets form a compact set $C_1\subset \partial (G_v, \bP_v)$ containing $C_0$ and having only finitely many connected components. By our choice of $d$, $C_1 \subset \partial (G_v, \bP_v) \setminus \{\rho\}$. We can then join the finitely many connected components of $C_1$ with finitely many paths in $\partial (G_v, \bP_v) \setminus \{\rho\}$, giving us a compact connected fundamental domain $C$.
	
\end{proof}

Using this, we can find a connected fundamental domain for $P$'s action on $\Omega$.

\begin{prop}\label{Fundamental Domain on Omega}
	There is a compact, connected fundamental domain $K\subset \Omega$ for the action of $P$ on $\Omega$. Furthermore, if $S$ is any finite generating set for $P$, then $K$ can be chosen such that $sK\cap K \neq \emptyset$ for each $s\in S$. 
\end{prop}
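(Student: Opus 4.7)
The plan is to transport a connected fundamental domain from the Bowditch-type quotient $\partial(G_v,\bP_v)\setminus\{\rho\}$ up to $\Omega$ via the decomposition map $\pi$. By Theorem \ref{monotone usc decomp of Y}, $\pi$ restricts to a $P$-equivariant monotone upper semicontinuous surjection $\bnd Y \to \partial(G_v,\bP_v)$ whose only non-singleton fibres are the boundaries of the maximal flats of $Y$. The point $\rho := \pi(F) = \pi(F^*)$ is $P$-fixed and $\pi\inverse(\rho) = \bnd F^*$, so we obtain a $P$-equivariant monotone upper semicontinuous surjection
\[ \pi \colon \Omega \longrightarrow \partial(G_v,\bP_v)\setminus\{\rho\}. \]

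First I would produce a compact, connected fundamental domain downstairs: Proposition \ref{Connected Fundamental Domain} gives a compact, connected fundamental domain $C_0$ for the action of $P$ on $\partial(G_v,\bP_v)\setminus\{\rho\}$. To handle the generating set $S$, I would enlarge $C_0$ inside the target. Theorem \ref{thm: peripheral splitting} tells us $\partial(G_v,\bP_v)$ is connected and locally connected, hence path connected, and the same proof shows $\rho$ is not a global cut point, so $\partial(G_v,\bP_v)\setminus\{\rho\}$ is also path connected. For each $s\in S$, choose a compact path $\gamma_s$ inside $\partial(G_v,\bP_v)\setminus\{\rho\}$ from a point of $C_0$ to a point of $s\cdot C_0$, and set
\[ C := C_0 \cup \bigcup_{s\in S} \gamma_s. \]
Since $S$ is finite, $C$ is a compact, connected subset of $\partial(G_v,\bP_v)\setminus\{\rho\}$. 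By construction, the endpoint of $\gamma_s$ lies in $(s\cdot C_0)\cap C \subset sC\cap C$, so $sC\cap C\neq\emptyset$ for every $s\in S$, and the covering property $P\cdot C = \partial(G_v,\bP_v)\setminus\{\rho\}$ is preserved because we only added points to $C_0$.

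Finally I would set $K := \pi\inverse(C)$ and verify the four required properties. Because $C$ avoids $\rho$, $K$ lies in $\Omega$; because $\pi$ is closed (upper semicontinuity) with compact fibres and $C$ is compact, $K$ is compact; because $\pi$ is monotone, Proposition \ref{Montone if and only if connected} forces $K = \pi\inverse(C)$ to be connected. $P$-equivariance of $\pi$ yields $P\cdot K = \pi\inverse(P\cdot C) = \Omega$ and $sK\cap K = \pi\inverse(sC\cap C)\neq \emptyset$ for each $s\in S$. I expect the main obstacle to be purely bookkeeping: confirming that $\pi\inverse(\rho)=\bnd F^*$ (so that the restriction $\pi|_\Omega$ really lands in $\partial(G_v,\bP_v)\setminus\{\rho\}$), and that the monotone upper semicontinuous decomposition machinery from Section \ref{section:decomp} restricts cleanly, so that Proposition \ref{Montone if and only if connected} delivers connectedness of $K$ at the end.
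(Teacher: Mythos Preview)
Your proposal is correct and follows essentially the same route as the paper: obtain a compact connected fundamental domain $C$ downstairs in $\partial(G_v,\bP_v)\setminus\{\rho\}$ via Proposition \ref{Connected Fundamental Domain}, enlarge it so that $sC\cap C\neq\emptyset$ for each $s\in S$, and then set $K=\pi^{-1}(C)$, using the monotone upper semicontinuous decomposition to conclude $K$ is compact and connected. The only cosmetic difference is that you enlarge $C_0$ by explicitly adjoining finitely many arcs $\gamma_s$, whereas the paper first enlarges $C_0$ (possibly disconnecting it) and then reruns the argument of Proposition \ref{Connected Fundamental Domain} to restore connectedness; both variants work.
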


\begin{proof}
	We will use the decomposition map $\pi\colon  \bnd Y \to \bnd (G_v,\bP_v)$ and then pull back the compact, connected fundamental domain for $P$'s action on $\bnd(G_v,\bP_v) \setminus \{\rho \}$ to get a fundamental domain on $\Omega$.
	
	Let $C_0$ be a connected fundamental domain for the action of $P$ on \linebreak $\bnd (G_v, \bP_v) \setminus \{\rho\}$ as in Proposition \ref{Connected Fundamental Domain}. Without loss of generality, we may increase the size of $C_0$ such that $C_0\cap sC_0 \neq \emptyset$ for all $s\in S$. We can then repeat the process from Proposition \ref{Connected Fundamental Domain} to construct a connected $C$.
	
	Since the map $\pi\colon \bnd Y \to \bnd (G_v, \bP_v)$ defines an upper semicontinuous monotone decomposition, then we know that $\pi \inverse(C)$ is compact and connected. 	The map $\pi$ is $G$-equivariant, and therefore since $C$ is a fundamental domain, $K$ is as well. Since $C$ satisfies the desired intersection property, then $K$ does as well.
\end{proof}

Using the association between subsets of $F$ and those of $\Omega$, we can establish a way to determine when subsets of $\Omega$ are connected.

\begin{prop} \label{K and C}
	There exists compact, connected fundamental domains $K$ and $C$ for the action of $P$ on $\Omega$ and $F$, respectively, such that the following property holds: Let $\P$ be any subset of $P$. Then:
	
	\begin{center}
		If $\bigcup\limits_{p\in \P} pC$ is connected in $F$, then $\bigcup\limits_{p\in \P} pK$ is connected in $\Omega$.
	\end{center}
\end{prop}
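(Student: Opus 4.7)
The plan is to choose the two fundamental domains in a specific order: first pick $C$ in $F$, read off a finite generating set of $P$ from how translates of $C$ overlap, and then invoke Proposition~\ref{Fundamental Domain on Omega} to produce a $K$ that is compatible with this generating set. Once both domains are in hand, the implication reduces to a comparison of two ``nerve-type'' intersection graphs on $\P$.

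Since $P$ acts geometrically on the Euclidean flat $F$, I would take $C$ to be any connected compact fundamental domain for this action (for instance a Dirichlet domain or a fundamental parallelotope). Let
\[ S = \{ p \in P \setminus \{1\} : pC \cap C \neq \emptyset \}. \]
Properness of the $P$-action on $F$ makes $S$ finite, and a standard chaining argument using connectedness of $F$ and local finiteness of the collection $\{pC\}_{p\in P}$ shows that $S$ generates $P$. Now apply Proposition~\ref{Fundamental Domain on Omega} with this finite generating set $S$ to obtain a compact connected fundamental domain $K$ for the $P$-action on $\Omega$ satisfying $sK\cap K\neq\emptyset$ for every $s\in S$.

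To verify the desired implication, suppose $\P\subset P$ with $\bigcup_{p\in\P}pC$ connected in $F$. Define the intersection graph $\Gamma_C$ on vertex set $\P$ whose edges are pairs $\{p,q\}$ with $pC\cap qC\neq\emptyset$. Because each $pC$ is closed and connected in $F$ and the family of translates is locally finite, connectedness of the union is equivalent to connectedness of $\Gamma_C$. Form the analogous graph $\Gamma_K$ on $\P$ using translates of $K$. Any edge of $\Gamma_C$ corresponds to some $q^{-1}p\in S$, and by our choice of $K$ this forces $pK\cap qK\neq\emptyset$; hence $\Gamma_K$ contains every edge of $\Gamma_C$, so $\Gamma_K$ is connected. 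Since each $pK$ is compact and connected and the $P$-translates of $K$ are locally finite in $\Omega$, this connectedness of $\Gamma_K$ yields connectedness of $\bigcup_{p\in\P}pK$, as required.

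The main points needing care are (i) that $S$ as defined indeed generates $P$, which is the classical fact that the ``contact generators'' of a connected compact fundamental domain under a geometric action generate the group, and (ii) that a prescribed finite generating set can be realized when constructing $K$, which is exactly the content of Proposition~\ref{Fundamental Domain on Omega}. Everything else is a routine application of the equivalence between connectedness of a locally finite union of closed connected sets and connectedness of its intersection nerve, so I do not expect any further technical obstacle.
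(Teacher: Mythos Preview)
Your proposal is correct and follows exactly the paper's approach: pick $C$ first, define $S$ as the set of $p$ with $pC\cap C\neq\emptyset$, invoke Proposition~\ref{Fundamental Domain on Omega} to get $K$ with $sK\cap K\neq\emptyset$ for $s\in S$, and deduce the implication from the key fact that $pC\cap qC\neq\emptyset\Rightarrow pK\cap qK\neq\emptyset$. Your nerve/intersection-graph formulation simply spells out the step the paper compresses into ``this implies the desired conclusion.''
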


\begin{proof}
	Choose a compact, connected fundamental domain $C$ for the action of $P$ on $F$. Let $S$ be the set of $p\in P$ such that $pC\cap C\neq \emptyset$. Then $S$ is a finite generating set for $P$. Use Proposition \ref{Fundamental Domain on Omega} to pick $K$ such that $sK\cap K \neq \emptyset$ for all $s\in S$. This implies the following, which implies the desired conclusion:	
	\begin{center}
		If $C\cap pC \neq \emptyset$, then $K\cap pK \neq \emptyset$
	\end{center}
\end{proof}

We will now fix the compact, connected fundamental domains $K$ and $C$ for $P$'s action on $\Omega$ and $F$, respectively. Using Corollary \ref{Compact Set Association}, let $c'\colon [0,\infty)\to Y$ be a geodesic ray which meets $F$ orthogonally. Using the $P$-action, we may assume that $c'(0)\in C$ and $c'(\infty)\in K$. We will call $c'(0)=q_0$ and use $q_0$ as the basepoint for the cone topology.

Let $\alpha\in \bnd F$. Our method is two-fold. First,  associate connected neighborhoods of $\alpha$ in $\overline{F^*}$ to neighborhoods of $\alpha$ in $\overline{\Omega}$, which by Proposition \ref{K and C} will be connected. Then, given a neighborhood of $\alpha$ in $\overline{\Omega}$, find a connected neighborhood of $\alpha$ in $\overline{F^*}$ to associate inside of the neighborhood in $\overline{\Omega}$. 

To make this possible, we need to find suitably nice neighborhoods in $\overline{F^*}$.

\begin{defn}
	Let $Z$ be a $\cat(0)$ space and $\xi\in \bnd Z$. A neighborhood $\overline{N}$ of $\xi$ in $\overline{Z}$ is \textit{clean} if the following conditions hold:
	
	\begin{enumerate}
		\item $\overline{N}$ is connected.
		\item $N=\overline{N}\cap Z$ is connected, and
		\item Every point of $\Lambda:=\overline{N}\cap \bnd Z$ is a limit point of $N$.
	\end{enumerate}
\end{defn}

Hruska and Ruane prove  (\cite{HR17}, Proposition 3.3) that for any $\xi\in \bnd Z$, $\xi$ has a local base of clean connected neighborhoods. We show below that we can pick this neighborhood basis to be clean and connected both in $\overline{F}$ and $\overline{F^*}$. 

\begin{lem}
	There is a local base of neighborhoods of $\alpha$ in $\overline{F^*}$ which are clean and connected in $\overline{F}$ and in $\overline{F^*}$. 
\end{lem}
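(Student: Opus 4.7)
The plan is to exploit the fact that $F^*$ is an isometric copy of some Euclidean space $\E^m$ and that $F$ sits inside it as a flat affine subspace of dimension $n \leq m$. Under this identification, $\bnd F$ and $\bnd F^*$ are standard Euclidean spheres $S^{n-1} \subset S^{m-1}$, and $\alpha \in \bnd F$ corresponds to a unit direction tangent to $F$. The idea is that in Euclidean geometry, convex cones with apex at the basepoint provide ready-made clean, connected neighborhoods at infinity, and if the cone's axis lies in $F$ then both the cone and its intersection with $F$ are convex and well-behaved.

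First, I would fix a basepoint $x_0 \in F$ and choose a geodesic ray $c \colon [0,\infty) \to F$ based at $x_0$ that represents $\alpha$; since $F$ is an isometrically embedded convex subset of $F^*$, the same $c$ represents $\alpha$ when viewed as a point of $\bnd F^*$. For parameters $r > 0$ and $D > 0$ with $D/r$ small, take the candidate neighborhood to be the closure $\overline{N}_{r,D}$ of $U(c,r,D)$ in $\overline{F^*}$. By definition of the cone topology on $\overline{F^*}$, letting $r \to \infty$ with $D$ fixed produces a local basis at $\alpha$.

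Next, I would verify cleanness and connectedness on both sides. Euclideanly, $U(c,r,D) \cap F^*$ is exactly the set of points $x \in F^*$ with $d(x_0,x) > r$ whose unit direction from $x_0$ makes angle less than some $\theta = \theta(D/r)$ with $c$, where $\theta \to 0$ as $D/r \to 0$. For $D/r$ sufficiently small this is an open convex cone in $F^*$, hence connected, and its closure in $\overline{F^*}$ attaches a spherical cap of radius $\theta$ around $\alpha$ in $\bnd F^*$, which is also connected; any point of this cap is the limit of interior cone points obtained by walking outward along the corresponding ray, so the cleanness condition in $\overline{F^*}$ is immediate. Because $c \subset F$ and $F$ is a convex affine subspace of $F^*$, intersecting the entire picture with $\overline{F}$ simply restricts both the cone and its cap to $F$, yielding a convex open cone in $F$ together with a smaller spherical cap around $\alpha$ in $\bnd F$; the same reasoning shows this restriction is clean and connected in $\overline{F}$.

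The only mild subtlety I anticipate is checking that the subspace topology on $\overline{F}$ inherited from $\overline{F^*}$ actually agrees with the intrinsic cone topology on $\overline{F}$ as a $\cat(0)$ space, so that the restricted sets genuinely form a local basis at $\alpha$ in $\overline{F}$. Since $F$ is closed and convex in the Euclidean space $F^*$, orthogonal projection to $F$ is 1-Lipschitz and intertwines the projection maps $\pi_r$ on $F^*$ and on $F$, so this reduces to elementary bookkeeping rather than a genuine obstacle. I therefore expect the main content of the proof to be naming the cone neighborhoods and checking the three bullets in the definition of clean, with no step requiring serious technical machinery beyond what the Euclidean setting provides.
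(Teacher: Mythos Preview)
Your approach is essentially the same as the paper's: both use the basic cone-topology sets $U(c,r,D)$ (with $c$ the ray in $F$ representing $\alpha$) as the local base and then verify the three clean conditions in $\overline{F^*}$ and, by intersecting with the convex subflat $F$, in $\overline{F}$. One small correction: the set $U(c,r,D)\cap F^*$ is a \emph{truncated} cone (a cone minus a ball of radius $r$) and is not literally convex, but it is evidently connected via polar coordinates, so your argument goes through unchanged once you replace ``convex'' with ``connected'' there.
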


\begin{proof}
	Fix $q_0$ as the basepoint for the cone topology and let $c\colon [0,\infty)\to F^*$ be the ray based at $q_0$ representing $\alpha$. It is shown in \cite{HR17} that basic open sets, $U(c,r,D)$, are clean and connected in $\overline{F^*}$.  
	
	First note that since $c(0)\in F$, $c(\infty)\in \bnd F$ and $F$ is convex in $F^*$, then $U(c,r,D)\cap \overline{F}$ is non-empty. Denote $N_F= F\cap U(c,r,D)$. 
	
	A point $p\in F$ is in $N_F$ if and only if the geodesic segment from $c(0)$ to $p$ intersects $B_D(c(r))$. Let $\gamma\colon [0,t]\to F$ be the geodesic segment $[c(0),p]$, then we can choose $s$ so that $d(\gamma(s),c(r))<D$. It follows that the entire geodesic segment $[\gamma(s),\gamma(t)]\subset U(c,r,D)$, so every point of $N_F$ is contained in a connected subset of $N_F$ which intersects the connected ball $B_D(c(r))$ and thus $N_F$ is connected. All of $U(c,r,D)$ is contained in the closure of $N_F\subset \overline{F}$, and therefore is connected as well. Since $U(c,r,D)$ is contained in the closure of $N$, the limit set requirement is satisfied as well.
	
\end{proof}

For a given clean, connected neighborhood $\overline{N}=N\cup \Lambda$, let \linebreak $\P:=\{p\in P: pC\cap N\neq \emptyset\}$. Since $q_0\in F$, $\P$ is non-empty and by Proposition \ref{K and C}, $\bigcup\limits_{p\in \P} pC$ is connected. 

\begin{defn}
	Let $\alpha\in \bnd F$ and $\overline{N}$ be a clean, connected neighborhood with $\overline{N}\cap F = N$ and $\overline{N}\cap \bnd F = \Lambda'$. Let $\P := \{p\in P: pC\cap N\neq \emptyset\}$. Then the \textit{neighborhood in $\overline{\Omega}$ of $\alpha$ associated to $\overline{N}$} is $\overline{Z}= \left( \bigcup\limits_{p\in \P} pK\right) \cup \Lambda'$. 
\end{defn} 

We now need to show three things: $\overline{Z}$ is a neighborhood of $\alpha$, it is connected, and when $\overline{N}$ can be chosen to be `small,' then $\overline{Z}$ is `small' as well.

\begin{lem} \label{Z is a nieghborhood}
	The set $\overline{Z}$ associated to $\overline{N}$ is a neighborhood of $\alpha$ and is connected.
\end{lem}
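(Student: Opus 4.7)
The plan is to verify the two assertions separately, both hinging on the pairing between the inside $F$ and the boundary $\Omega$ provided by Corollary~\ref{Compact Set Association}, combined with Lemma~\ref{Haulmark Quasi Convex}.

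For connectedness, I would first establish that $\bigcup_{p\in\P} pC$ is connected in $\overline{F}$. Since $C$ is a fundamental domain for $P$ acting on $F$, one has $F=\bigcup_{p\in P} pC$, and so $N = \bigcup_{p\in\P}(N\cap pC)$; in particular $N\subset \bigcup_{p\in\P} pC$. Cleanness of $\overline{N}$ gives that $N$ is connected, each $pC$ is connected, and each $pC$ with $p\in\P$ meets $N$ by definition of $\P$, so the union is connected. Proposition~\ref{K and C} then transfers this to connectedness of $\bigcup_{p\in\P} pK$. To fold in $\Lambda'$, I would show each $\beta\in\Lambda'$ is a limit point of $\bigcup_{p\in\P} pK$ in $\overline{Y}$. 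Cleanness of $\overline{N}$ supplies a sequence $x_n\in N$ with $x_n\to\beta$ in $\overline{F^*}$; pick $p_n\in\P$ with $x_n\in p_nC$. Because $p_nq_0$ lies within $\operatorname{diam}(C)$ of $x_n$, we obtain $p_nq_0\to\beta$ as well. Translating the reference ray $c'$ gives $p_nc'\in\bot(F)$ with basepoints $p_nq_0\to\beta$, and Lemma~\ref{Haulmark Quasi Convex} then promotes this to $(p_nc')(\infty)\to\beta$ in $\overline{Y}$. Since $(p_nc')(\infty)\in p_nK$, this exhibits $\beta$ as a limit of points in $\bigcup_{p\in\P} pK$, completing the connectedness argument.

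For the neighborhood assertion, I would reduce to the case $\overline{N}=U(c,r,D)\cap \overline{F^*}$, where $c$ is the ray in $F$ from $q_0$ representing $\alpha$, using the local-base description from the previous lemma (and taking $D>\kappa$, which is permissible since such $U(c,r,D)$ still form a local base at $\alpha$). I would then propose
\[
V := U(c,r,D-\kappa)\cap\overline{\Omega}
\]
as the open neighborhood of $\alpha$ in $\overline{\Omega}$ contained in $\overline{Z}$. For $\beta\in V$ split into two cases. If $\beta\in\bnd F$, the nesting $U(c,r,D-\kappa)\subset U(c,r,D)$ forces $\beta\in U(c,r,D)\cap\bnd F=\Lambda'\subset \overline{Z}$. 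If instead $\beta\in\Omega$, pick $p\in P$ with $\beta\in pK$ (such $p$ exists because $K$ is a fundamental domain for the $P$-action on $\Omega$), and invoke Corollary~\ref{Compact Set Association} to obtain $q\in\bot(F)$ with $q(0)\in pC$ and $q(\infty)=\beta$. The converse half of Lemma~\ref{Haulmark Quasi Convex} gives $q(0)\in U(c,r,D)\cap F=N$, so $pC\cap N\neq\emptyset$, i.e.\ $p\in\P$, and hence $\beta\in pK\subset \overline{Z}$.

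The main obstacle I expect is the constant-chasing in the second part: the $\kappa$-enlargement built into Lemma~\ref{Haulmark Quasi Convex} sets up a trade-off between the size of $\overline{N}$ (through $D$) and the size of the basic neighborhood $V$ one can fit inside $\overline{Z}$, which is precisely why one must exploit the local-base picture rather than work with an arbitrary clean $\overline{N}$ from the outset. The connectedness piece is considerably gentler, since it proceeds by an essentially combinatorial gluing of the connected pieces $pK$ along the connected set $N$, with Lemma~\ref{Haulmark Quasi Convex} used only to extend the cluster points to $\Lambda'$.
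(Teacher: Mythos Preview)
Your connectedness argument is fine and matches the paper's: show $\bigcup_{p\in\P}pC$ is connected (each $pC$ meets the connected $N$), apply Proposition~\ref{K and C}, then exhibit each $\beta\in\Lambda'$ as a limit of points $p_n\cdot c'(\infty)\in p_nK$ using cleanness and the orthogonal-ray estimate. The paper phrases the last step via Lemma~\ref{delta(epsilon,M)} rather than Lemma~\ref{Haulmark Quasi Convex}, but this is cosmetic.

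The neighborhood argument, however, has a real gap. You write ``invoke Corollary~\ref{Compact Set Association} to obtain $q\in\bot(F)$ with $q(0)\in pC$ and $q(\infty)=\beta$''. This is not available. The fundamental domains $K$ and $C$ fixed in Proposition~\ref{K and C} are chosen \emph{independently}: $C$ is picked first as a fundamental domain for $P$ on $F$, and $K$ is then produced from the Bowditch boundary via Proposition~\ref{Fundamental Domain on Omega}, not from $C$ via Corollary~\ref{Compact Set Association}. The only link established between them is the single reference ray $c'$ with $c'(0)=q_0\in C$ and $c'(\infty)=q_\infty\in K$. There is no guarantee that an arbitrary $\beta\in pK$ has an orthogonal representative based in $pC$. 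Nor can you salvage this by forgetting $p$: if you take any $q\in\bot(F)$ with $q(\infty)=\beta$ and use Lemma~\ref{Haulmark Quasi Convex} to land $q(0)$ in $N$, you learn only that $q(0)\in p'C$ for \emph{some} $p'\in\P$, with no control over whether $\beta\in p'K$.

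The paper closes this gap using the null-family property of $\{pK\}$ (Proposition~\ref{Null Family Saturation}), which you never invoke. Given arbitrary $\overline{N}$, choose $R$ so that $U(c,R,D+\kappa)\cap\overline{F^*}\subset\overline{N}$, then use saturation to find $V\subset U(c,R,D)$ with the property that $pK\cap V\neq\emptyset$ forces $pK\subset U(c,R,D)$. Now for $\eta\in V\cap\Omega$ with $\eta\in pK$, the \emph{entire} $pK$ sits in $U(c,R,D)$; in particular the reference point $p\cdot q_\infty$ does, and Lemma~\ref{Haulmark Quasi Convex} pushes $p\cdot q_0$ into $U(c,R,D+\kappa)\cap F\subset N$, giving $p\in\P$. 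The null-family step is precisely what lets you trade the arbitrary point $\eta\in pK$ for the single controllable point $p\cdot q_\infty$, and your direct $U(c,r,D-\kappa)$ shrinkage cannot substitute for it.
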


\begin{proof}
	First, we show that $\overline{Z}$ is a neighborhood of $\alpha$. Let $c\colon [0,\infty)\to F^*$ be the ray based at $q_0$ representing $\alpha$ and fix some $D>0$. Since $\overline{N}$ is a neighborhood of $\alpha$, we can choose $R$ large enough so that $U(c,R,D+\kappa)\cap \overline{F^*}$ is contained in $\overline{N}$, and therefore $U(c,R,D)\cap \overline{F^*}$ is contained in $\overline{N}$ as well. The $P$-translates of $K$ form a null family, so there is a neighborhood $V$ of $\alpha$ with $V\subset U(c,R,D)$ such that if $pK\cap V \neq \emptyset$, then $pK\subset U(c,R,D)$.
	
	Now it suffices to show that $V\subset \overline{Z}$. For $\eta\in V$, either $\eta\in \Omega$ or $\eta\in \bnd F$. In the former case, $\eta\in pK$ for some $p\in P$ and so $pK\subset U(c,R,D)$. In particular, $p\cdot q_\infty\in U(c,R,D)$ and by Lemma \ref{Haulmark Quasi Convex}, $p\cdot q_0\in U(c,R,D+\kappa)$. Since $q_0\in C$ and $U(c,R,D+\kappa)\cap \overline{F^*}\subset \overline{N}$, then $pC\cap N \neq \emptyset$. Therefore, $\eta\in \overline{Z}$ by the construction.
	
	In the latter case, $\eta\in \bnd F\cap V$, so
	
	$$\eta \in U(c,R,D)\cap \bnd F \subset U(c,R,D+\kappa)\cap \bnd F \subset \overline{N}\cap \bnd F = \Lambda'\subset \overline{Z}$$
	Therefore, $V\subset \overline{Z}$ and so it is a neighborhood of $\alpha$ in $\overline{\Omega}$.
	
	Now we must show that $\overline{Z}$ is connected. By Proposition \ref{K and C}, $\overline{Z}\cap \Omega$ is connected. Therefore it suffices to show that every point of $\Lambda'$ is a limit point of $\overline{Z}\cap \Omega$. Let $\xi\in \Lambda'$. Since $\overline{N}$ is clean, $\xi$ is the limit of a sequence $x_i\in N$. Each $x_i\in p_iC$ for some $p_i\in P$. Fix $\epsilon>0$ and note that since $x_i\to \xi$, $x_i\in U(\xi,R,\epsilon)$ for all but finitely many $i$. Let $A$ be the diameter of $C$, then $p_i\cdot q_0\in U(\xi,R,\epsilon+A)$ for all but finitely many $i$, and so by Lemma \ref{delta(epsilon,M)}, $p_i\cdot q(R)\in U(\xi,R,\epsilon+A+\delta)$ for all but finitely many $i$. Therefore, $p_i\cdot q(\infty)\in U(\xi,R,\epsilon+A+\delta)$ and thus $p_i\cdot q(\infty)$ converge to $\xi$. Since $p_i\cdot C \cap N \neq \emptyset$, then $p_i\cdot K\cap \overline{Z} \neq \emptyset$ and so $\xi$ is the limit of points in $\overline{Z}\cap \Omega$ as desired.
\end{proof}

\begin{lem} \label{Z small}
	The neighborhood $\overline{Z}$ associated to $\overline{N}$ can be made arbitrarily small by choosing $\overline{N}$ to be a sufficiently small neighborhood of $\alpha$ in $\overline{F^*}$
\end{lem}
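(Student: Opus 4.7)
The plan is to pick a generic basic neighborhood of $\alpha$ in $\overline{\Omega}$ and force $\overline{Z}$ inside it by shrinking $\overline{N}$. Take $q_0$ as the basepoint for the cone topology and let $c$ be the ray in $F \subset F^*$ based at $q_0$ representing $\alpha$. A basic neighborhood of $\alpha$ in $\overline{\Omega}$ has the form $U(c,R,D) \cap \overline{\Omega}$, so it suffices to arrange $\overline{Z} \subset U(c,R,D)$ for arbitrary $R > 0$ and arbitrary $D > \kappa$, where $\kappa$ is the quasiconvexity constant from Theorem \ref{Quasi-convex Constant}.

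Dealing with $\Lambda'$ is immediate: since $\Lambda' \subset \overline{N}$, any choice with $\overline{N} \subset U(c,R,D)$ forces $\Lambda' \subset U(c,R,D)$. The substantive work is controlling $\bigcup_{p \in \P} pK$, and my plan is to first obtain control over translates $pC$ inside $\overline{F^*}$ via a null-family argument, and then transport that control to translates of $K$ via Lemma \ref{Haulmark Quasi Convex}. Specifically, since $P^*$ acts geometrically on $F^*$ and $C$ is compact in $F^*$, Bestvina's null-family result (as invoked in the null-family propositions earlier in this section) shows that the $P^*$-translates of $C$, and hence also the sub-collection $\{pC : p \in P\}$, form a null family in $\overline{F^*}$. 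The point $\alpha \in \bnd F$ lies in no $pC$, so Proposition \ref{Null Family Saturation} produces a neighborhood $V'$ of $\alpha$ in $\overline{F^*}$ with the property that any $pC$ meeting $V'$ is contained in $U(c,R,D-\kappa)$.

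I would then invoke the local base of clean connected neighborhoods of $\alpha$ in $\overline{F^*}$ established just before Lemma \ref{Z is a nieghborhood} to choose $\overline{N}$ to be a clean connected neighborhood of $\alpha$ with $\overline{N} \subset V' \cap U(c,R,D)$. For every $p \in \P$, by definition $pC \cap N$ is nonempty and hence meets $V'$, so $pC \subset U(c,R,D-\kappa)$. For any $\eta \in pK$, Corollary \ref{Compact Set Association} furnishes a ray $q \in \bot(F)$ with $q(0) \in pC$ and $q(\infty) = \eta$, and Lemma \ref{Haulmark Quasi Convex} then promotes $q(0) \in U(c,R,D-\kappa)$ to $q(\infty) \in U(c,R,D)$. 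Combining $\bigcup_{p \in \P} pK \subset U(c,R,D)$ with $\Lambda' \subset U(c,R,D)$ yields $\overline{Z} \subset U(c,R,D)$, as required.

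The main obstacle I anticipate is making sure the null-family/saturation step genuinely takes place in $\overline{F^*}$ (the ambient space of $\overline{N}$), rather than in $\overline{F}$ where $P$, but not $P^*$, acts cocompactly; otherwise the neighborhood $V'$ would not be intersectable with the clean neighborhoods constructed in $\overline{F^*}$. The resolution is that $P \subset P^*$ and $P^*$ acts geometrically on $F^*$, so Bestvina's theorem delivers a null family of $P^*$-translates of $C$ in $\overline{F^*}$, of which the $P$-translates are a sub-collection, and this is enough to run the saturation argument directly in the desired ambient topology.
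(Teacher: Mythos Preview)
Your overall strategy---use a null-family saturation argument on the $P$-translates of $C$ in $\overline{F^*}$, then push across to $\Omega$ via orthogonal rays and Lemma~\ref{Haulmark Quasi Convex}---is sound and close in spirit to the paper's proof. However, one step does not go through as written.

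The problem is your invocation of Corollary~\ref{Compact Set Association}. You claim that for any $\eta \in pK$ there is a ray $q \in \bot(F)$ with $q(0) \in pC$ and $q(\infty) = \eta$. But the $C$ appearing in the definition of $\P$ (and hence of $\overline{Z}$) is the fundamental domain chosen in Proposition~\ref{K and C}, not the compact set $\pi_F(\pi_\Omega^{-1}(K))$ produced by Corollary~\ref{Compact Set Association}. In Proposition~\ref{K and C}, $C$ is selected first as a connected fundamental domain for $P$ on $F$, and then $K$ is built via Proposition~\ref{Fundamental Domain on Omega} to satisfy an intersection condition; nothing forces orthogonal rays from points of $K$ to land in this particular $C$. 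All the paper actually records is a \emph{single} orthogonal ray with $q_0 \in C$ and $q_\infty \in K$. So from $pC \subset U(c,R,D-\kappa)$ you can only conclude $p \cdot q_\infty \in U(c,R,D)$, i.e.\ that \emph{one} point of $pK$ lies in $U(c,R,D)$, not all of $pK$.

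There are two easy repairs. One is to enlarge $C$ at the outset: let $\tilde{C}$ be the union of $C$ with the compact set $\pi_F(\pi_\Omega^{-1}(K))$ from Corollary~\ref{Compact Set Association}; the $P$-translates of $\tilde{C}$ still form a null family in $\overline{F^*}$, and since $C \subset \tilde{C}$, every $p \in \P$ has $p\tilde{C}$ meeting $N$. Running your saturation argument with $\tilde{C}$ in place of $C$ then legitimately gives $pK \subset U(c,R,D)$. The other repair is what the paper does: rather than trying to capture all of $pK$ directly, it first applies the null-family saturation to the $P$-translates of $K$ in $\overline{\Omega}$ to get a smaller neighborhood $V \subset U$ with the property that $pK \cap V \neq \emptyset$ implies $pK \subset U$; then it suffices to land the single point $p \cdot q_\infty$ in $V$, which follows from $p \cdot q_0 \in U(c,R,D)$ via Lemma~\ref{Haulmark Quasi Convex}. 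Either route closes the gap; the paper's version trades one extra null-family step for not having to track a second compact set in $F$.
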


\begin{proof}
	Let $U$ be a neighborhood of $\alpha$ in $\overline{\Omega}$. Since the $P$-translates of $K$ form a null family. There is a neighborhood $V\subset U$ such that if $pK\cap V$ is non-empty, then $pK\subset U$. Since $V$ is a neighborhood of $\alpha$, there are constants $R>0$ and $D>0$ such that $U(c,R,D+\kappa)\subset V$ and $U(c,R,D)\cap \bnd F\subset V$. Then, by Lemma \ref{Haulmark Quasi Convex}, $U(c,R,D)$ is a neighborhood of $\alpha$ in $\overline{F^*}$ such that if $p\cdot q_0\in U(c,R,D)$, then $p\cdot q_\infty \in U(c,R,D+\kappa)$ and so $pK\subset U$. Furthermore, since the $P$-translates of $C$ form a null family, there is a $W\subset U(c,R,D)$ such that if $pC\cap W$ is non-empty, then $pC\subset U(c,R,D)$. Finally, there is a clean, connected neighborhood $\overline{N}$ within $W$.
	
	It remains to show that the associated neighborhood $\overline{Z}$ is contained in $U$. Let $\eta\in \overline{Z}$. By our choice of $U(c,R,D)$, if $\eta\in \bnd F$, then $\eta \in V$ and therefore in $U$. If $\eta \in \Omega$, then $\eta\in pK$ for some $p$, so $pK\cap \overline{Z}$ is non-empty and thus $pC\cap N$ is non-empty. This means that $pC\cap W\neq \emptyset$ so $pC\subset U(c,R,D)$ and thus $pK$ intersects $V$ non-trivially. By the choice of $V$, $pK\subset U$ and so $\eta\in U$ as desired. Thus $\overline{Z}$ is contained in $U$.

\end{proof}

Recall the definitions of weakly locally connected and locally connected:

\begin{defn}[Weakly locally connected]\label{def:wlc}
	Let $M$ be a topological space. $M$ is \textit{weakly locally connected} at $m\in M$ if for every open set $V$, there  is a connected (and not necessarily open) set $N\subset V$ with $m$ in the interior of $N$.
	
	We say a space $M$ is weakly locally connected if it is weakly locally connected at every point.
\end{defn}

\begin{defn}[Locally connected]
	Let $M$ be a topological space. $M$ is \textit{locally connected} at $m\in M$ if for every open set $V$ containing $m$, there is a connected open set $U$ with $m\in U \subset V$. 
	
	We say $M$ is locally connected if it is locally connected at every point.
\end{defn}

In Lemma \ref{Z is a nieghborhood} and Lemma \ref{Z small}, we do not know if $\overline{Z}$ is open or not, so we cannot prove local connectivity in $\bnd Y$. Instead, we show the following:
\begin{prop}\label{Y is wlc}
	If $\alpha\in \bnd Y \cap \bnd F$, then $\bnd Y$ is weakly locally connected at $\alpha$.
\end{prop}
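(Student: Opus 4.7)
The plan is to glue together two pieces: the associated neighborhood $\overline{Z}\subset \overline{\Omega}$ from the previous section, and a small connected open neighborhood in the sphere $\bnd F^*$. The key observation making this work is the covering $\bnd Y = \overline{\Omega}\cup \bnd F^*$, which follows since $\overline{\Omega}=\Omega\cup \bnd F$, $\bnd Y = \Omega\cup\bnd F^*$, and $\bnd F\subset \bnd F^*$. The point $\alpha$ lies in the intersection $\overline{\Omega}\cap \bnd F^* \supset \bnd F$, so any construction based at $\alpha$ can be done on each piece and then glued along $\bnd F$.

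Given any open neighborhood $V$ of $\alpha$ in $\bnd Y$, I would proceed as follows. Because $X$ has isolated flats, $\bnd F^*$ is a standard Euclidean sphere and hence locally path-connected; choose a connected open subset $U_F$ of $\bnd F^*$ with $\alpha\in U_F\subset V\cap \bnd F^*$. Then apply Lemma \ref{Z small} to produce a clean connected neighborhood $\overline{N}$ of $\alpha$ in $\overline{F^*}$ whose associated neighborhood $\overline{Z}$ in $\overline{\Omega}$ satisfies $\overline{Z}\subset V\cap\overline{\Omega}$. Set $N:= \overline{Z}\cup U_F$.

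It remains to verify that $N$ is connected, contained in $V$, and has $\alpha$ in its interior with respect to $\bnd Y$. Connectedness is immediate: $\overline{Z}$ is connected by Lemma \ref{Z is a nieghborhood}, $U_F$ is connected by construction, and both contain $\alpha$. The containment $N\subset V$ is built into the choices. For the interior condition, $\overline{Z}$ being a neighborhood of $\alpha$ in $\overline{\Omega}$ yields an open set $W_1\subset \bnd Y$ with $\alpha\in W_1$ and $W_1\cap\overline{\Omega}\subset \overline{Z}$, while $U_F$ being open in $\bnd F^*$ yields an open set $W_2\subset \bnd Y$ with $W_2\cap \bnd F^* = U_F$. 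Put $W:= W_1\cap W_2$; this is open in $\bnd Y$ and contains $\alpha$. For any $\xi\in W$, the covering $\bnd Y = \overline{\Omega}\cup \bnd F^*$ gives either $\xi\in W_1\cap \overline{\Omega}\subset \overline{Z}\subset N$ or $\xi\in W_2\cap \bnd F^* = U_F\subset N$. Hence $\alpha\in W\subset N$, completing the proof of weak local connectivity.

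The main obstacle is not the topological gluing argument itself, which is quite clean once one notices the decomposition $\bnd Y=\overline{\Omega}\cup\bnd F^*$; the substantive analytic content — producing connected neighborhoods on the $\overline{\Omega}$ side — has already been packaged into Lemmas \ref{Z is a nieghborhood} and \ref{Z small}. The only reason we get \emph{weak} local connectivity rather than local connectivity is that we have no control over whether $\overline{Z}$ itself is open in $\overline{\Omega}$, so $N$ need not be open in $\bnd Y$.
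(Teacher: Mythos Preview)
Your proof is correct and follows essentially the same approach as the paper: decompose $\bnd Y=\overline{\Omega}\cup\bnd F^*$, use Lemmas~\ref{Z is a nieghborhood} and~\ref{Z small} to obtain a connected neighborhood on the $\overline{\Omega}$ side, use local connectivity of the sphere $\bnd F^*$ on the other side, and take the union. Your argument is in fact slightly more careful than the paper's, which asserts that $V_1\cup V_2$ is a neighborhood of $\alpha$ in $\bnd Y$ without spelling out the $W_1\cap W_2$ verification you give.
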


\begin{proof}
	Let $U$ be a neighborhood of $\alpha$ in $\bnd Y$. Since $\bnd Y = \overline{\Omega} \cup \bnd F^*$, then $U=U_1\cup U_2$, with $U_1$ a neighborhood of $\alpha$ in $\overline{\Omega}$ and $U_2$ a neighborhood of $\alpha$ in $\bnd F^*$. Combining Lemma \ref{Z is a nieghborhood} and Lemma \ref{Z small}, there is a $V_1\subset U_1$ which is connected and contains $\alpha$. Since $\bnd F^*$ is a sphere, it is locally connected, so there is some $V_2\subset U_2$ which is connected and contains $\alpha$. Lastly, since $V_1\cap V_2$ is non-empty, then their union is connected as well. Thus $V_1\cup V_2\subset U$ is a connected neighborhood of $\alpha$.
\end{proof}

Combining all of this work leads to the following theorem:

\begin{thm}\label{Y is pconn}
	Let $(G,\bP)$ be a $\cat(0)$ group with isolated flats acting geometrically on $X$, let $G_v$ be a component vertex in its maximal peripheral splitting, acting geometrically on $X_v\subset X$ and let $Y$ be as in Definition \ref{defn Y}. Then $\bnd Y$ is path connected. 
\end{thm}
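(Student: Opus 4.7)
The plan is to prove path connectedness of $\bnd Y$ by verifying three ingredients of a classical continuum-theoretic theorem: I will show $\bnd Y$ is compact, connected, and locally connected, and then conclude by the theorem that a compact, connected, locally connected metric space is (locally) path connected (Hahn--Mazurkiewicz). All of the serious geometric content has already been absorbed into the preceding two sections; what remains is to assemble them.

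Compactness and metrizability of $\bnd Y$ will be inherited from $\bnd X$ once one notes that $Y_v$ is closed in $X$, being the preimage under the projection $p\colon X\to\T$ of the closed star of $v$ in the Bass--Serre tree. For connectedness I will invoke the decomposition theory of Section \ref{section:decomp}: by Theorem \ref{thm: peripheral splitting} the Bowditch boundary $\bnd(G_v,\bP_v)$ is connected, and by Theorem \ref{monotone usc decomp of Y} the restricted map $\pi\colon \bnd Y\to \bnd(G_v,\bP_v)$ is a monotone upper semicontinuous decomposition whose image is all of $\bnd(G_v,\bP_v)$. Proposition \ref{Montone if and only if connected} then gives that $\bnd Y=\pi\inverse(\bnd(G_v,\bP_v))$ is connected.

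For local connectedness I will split $\bnd Y$ into the three kinds of points identified at the opening of Section \ref{section:local conn at flats}. Points $\xi\in\bnd Y$ lying in no flat are handled directly by Corollary \ref{Y non-flat points lconn}. Points $\beta\in\bnd F^*\setminus \bnd X_v$ have a neighborhood in $\bnd Y$ contained in the Euclidean sphere $\bnd F^*$, and spheres are locally connected. Points $\alpha\in\bnd F=\bnd F^*\cap\bnd X_v$ are handled by Proposition \ref{Y is wlc}, which delivers weak local connectedness. Since genuine local connectedness trivially implies weak local connectedness, every point of $\bnd Y$ is at least weakly locally connected, and the standard argument (in a space weakly locally connected at every point, components of open sets are open) upgrades this to honest local connectedness of $\bnd Y$.

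With $\bnd Y$ a compact, connected, locally connected, metrizable space, path connectedness follows at once. The hard work is entirely behind us: it lay in Proposition \ref{Y is wlc}, whose proof required the proper, cocompact action of the flat stabilizer $P$ on $\Omega=\bnd Y\setminus\bnd F^*$, together with the delicate matching of neighborhoods in $\overline{F^*}$ with neighborhoods in $\overline{\Omega}$ through the compatible fundamental domains $K$ and $C$ of Proposition \ref{K and C}. Once that analysis is in place, the present theorem reduces to a clean application of continuum theory.
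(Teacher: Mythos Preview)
Your proposal is correct and is essentially the paper's own proof: both arguments establish connectedness via the monotone upper semicontinuous decomposition onto the (connected) Bowditch boundary $\bnd(G_v,\bP_v)$, run the same three-case analysis for (weak) local connectedness at points of $\bnd Y$, upgrade weak local connectedness everywhere to genuine local connectedness, and then appeal to the standard continuum-theoretic fact that a compact, connected, locally connected metric space is path connected. The only cosmetic differences are that the paper cites Willard explicitly for the upgrade and for the Peano-space conclusion, whereas you name Hahn--Mazurkiewicz.
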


\begin{proof}
	
	It suffices to show that $\bnd Y$ is both connected and weakly locally connected at every point. By Theorem 27.16 in \cite{Wil70}, a space which is weakly locally connected at every point is locally connected at every point. Also by Theorem 31.2 in \cite{Wil70}, a space which is compact, connected, locally connected and metrizable is locally path connected, which implies globally path connected (by \cite{Wil70}, Theorem 27.5). Since boundaries of proper metric spaces are compact and metrizable, all we need to show is that $\bnd Y$ is connected and weakly locally connected at each point. 
	
	Connectedness follows from Theorem \ref{monotone usc decomp of Y}. Since the map $\pi\colon \bnd Y \to \bnd (G_v,\bP_v)$ is monotone and $\bnd(G_v,\bP_v)$ is connected, then $\bnd Y$ is connected as well.
	
	For weakly locally connected, there are three types of points in $\bnd Y$ to consider: points in $\bnd X_v$ and not a flat, points just in a flat, and points in both $\bnd X_v$ and $\bnd F$ for some flat $F$. By Corollary \ref{Y non-flat points lconn}, if $x\in \bnd Y$ and $x\notin \bnd F$ for any flat $F$, then $\bnd Y$ is locally connected at $x$ and therefore weakly locally connected at $x$. If $\xi\in \bnd F^*$ for some flat $F^*$ and $\xi \notin \bnd X_v$, then $\bnd Y$ is locally connected at $\xi$ and so weakly locally connected at $\xi$. This is because sufficiently small neighborhoods of $\xi$ only contain points in $\bnd F^*$, which is locally connected. Lastly, if $\alpha\in \bnd X_v \cap \bnd F$ for some flat $F$, then by Proposition \ref{Y is wlc}, $\bnd Y$ is weakly locally connected at $\alpha$.
\end{proof}

We are almost ready to combine all this work to conclude that $\cat(0)$ groups with isolated flats have path connected visual boundaries. But first we need to show that these groups satisfy the lonely condition.

\begin{lem} \label{IFP blocks are lonely}
	Suppose $G$ is a 1-ended $\cat(0)$ group with isolated flats acting geometrically on a $\cat(0)$ space $X$. Let $\B$ be the block decomposition coming from the maximal peripheral splitting of $\G$, then all irrational rays are lonely.
\end{lem}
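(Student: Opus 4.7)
The plan is to show directly that any two geodesic rays $c, c'$ based at a common point $x_0 \in X \setminus \W$ with the same infinite itinerary must be asymptotic, hence represent the same boundary point. By Lemma \ref{Itinerary to Nerve}, this common itinerary corresponds to a geodesic ray in the Bass-Serre tree $\T$. Since the maximal peripheral splitting is bipartite, every other block along this ray is peripheral, and each peripheral block is the $1/2$-thickening of a maximal flat of $X$. List these flats in order as $F_1, F_2, F_3, \ldots$; they are pairwise distinct (since distinct vertices of $\T$ yield distinct blocks), so they form an infinite subfamily of the isolated family $\F$.

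By the construction of the block decomposition, both $c$ and $c'$ enter every peripheral block, hence come within $1/2$ of every $F_k$, and they cross the same walls in the same order. For each $k$, let $z_k, z'_k$ be the exit points of $c, c'$ from the peripheral block containing $F_k$, and $y_{k+1}, y'_{k+1}$ the entry points into the next peripheral block; all four points lie within $1/2$ of the corresponding flats. The geodesic segments $[z_k, y_{k+1}]$ and $[z'_k, y'_{k+1}]$ both pass through the unique component block between these two peripheral blocks. I would then apply Theorem \ref{Quasi-convex Constant}: part (1) places each segment inside $N_\kappa(F_k \cup F_{k+1} \cup \gamma_k)$, where $\gamma_k$ is the shortest geodesic between $F_k$ and $F_{k+1}$, and part (2) forces each segment to enter and exit this region within $\kappa$ of the feet of $\gamma_k$ in the respective flats. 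Thus, outside a bounded portion near each $F_k$, the rays $c$ and $c'$ are funneled through a common narrow corridor near $\gamma_k$.

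The main obstacle is passing from these local estimates to a uniform global bound on $d(c(t), c'(t))$. The isolated flats condition $\text{diam}(N_r(F_k) \cap N_r(F_{k+1})) \leq \rho(r)$ is the key input for controlling how errors propagate: because the overlaps between neighborhoods of consecutive flats have diameter depending only on the neighborhood radius, the projections of points on $c$ and $c'$ into successive flats must lie in bounded regions whose size is controlled independently of $k$, so the discrepancy between the rays across each $F_k$ cannot accumulate. A careful induction along the sequence $\{F_k\}$, tracking how $\kappa$ and $\rho$ interact with the finite excursions inside each $F_k$, should yield a uniform bound on $d(c(t), c'(t))$ for all $t$. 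Convexity of the CAT(0) distance function along paired rays then forces $c$ and $c'$ to be asymptotic, so $\alpha = \alpha'$. Hence every irrational ray is lonely.
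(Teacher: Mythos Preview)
Your proposal has the right overall shape, but you have overlooked a simplification that the paper exploits and that eliminates the ``careful induction'' you flag as the main obstacle. The crucial point is that both rays $c$ and $c'$ are based at the \emph{same} point $x_0$. For each flat $F$ in the common itinerary, let $\gamma_F$ be the geodesic from $x_0$ to its closest point $\gamma_F(b)$ in $F$. Part (2) of Theorem \ref{Quasi-convex Constant} then says that \emph{any} geodesic from $x_0$ meeting $F$ must pass within $\kappa$ of the single point $\gamma_F(b)$. Applying this to both $c$ and $c'$ shows they are $2\kappa$-close at some parameter for every flat in the itinerary. Since there are infinitely many such flats going out to infinity, the rays are $2\kappa$-close at unbounded times, and convexity of the CAT(0) distance function finishes immediately.

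By contrast, your approach compares consecutive flat-to-flat segments $[z_k,y_{k+1}]$ and $[z'_k,y'_{k+1}]$ using part (1) of Theorem \ref{Quasi-convex Constant}. The difficulty you identify is real: quasiconvexity of $F_k\cup F_{k+1}\cup\gamma_k$ constrains these segments to a $\kappa$-neighborhood of that set, but it does not by itself pin down where within $F_k$ the ray exits, nor does it prevent the discrepancy from growing as the rays traverse the interior of each $F_k$. Your appeal to the $\rho(r)$-bound on $\mathrm{diam}(N_r(F_k)\cap N_r(F_{k+1}))$ is plausible, but turning it into a uniform bound on $d(c(t),c'(t))$ requires an argument you have not supplied, and it is not clear this inductive bookkeeping closes without eventually invoking something equivalent to the basepoint trick. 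The paper's argument sidesteps all of this: anchoring everything at $x_0$ gives a single fixed target point in each flat, so no propagation of error ever arises.
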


\begin{proof}
	For any flat $F$, let $\gamma_F\colon [0,b]\to X$ be the geodesic segment from $x_0$ to the closest point of $F$. Since $F$ is convex, a closest point exists and is unique. By Theorem \ref{Quasi-convex Constant}, $\gamma_F\cup F$ is $\kappa$-quaisconvex. In particular, every geodesic starting at $x_0$ which intersects $F$ must be the in $\kappa$-neighborhood of $\gamma_F(b)$. 
	
	Let $c,c'$ be rays in $X$ based at $x_0$ with the same, infinite itinerary. Since these rays have the same itinerary, they pass through the same sequence of flats, and must pass through infinitely many flats. But then the rays are $2\kappa$-close infinitely often. By the convexity of the distance metric, the rays must be $2\kappa$-close always, and therefore $c$ and $c'$ are asymptotic.
	
\end{proof}

\begin{thm} \label{thm main} 
	All 1-ended $\cat(0)$ groups with isolated flats have path connected visual boundaries.
\end{thm}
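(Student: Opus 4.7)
The plan is to apply the modified form of Theorem \ref{combo} observed in the remark following Proposition \ref{lonely plus rational is pconn}: it suffices to verify that (a) $\partial_R X$ is path connected, and (b) all irrational rays are lonely. Condition (b) is exactly Lemma \ref{IFP blocks are lonely}, so the remaining task is to establish (a), after which Proposition \ref{lonely plus rational is pconn} will yield $\partial_R X \cup \lonely X = \partial_R X \cup \partial_I X = \bnd X$ path connected.

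To handle (a), I would cover $\partial_R X = \bigcup_{B\in\B}\bnd B$ by the family $\{\bnd Y_v\}$ as $v$ ranges over the component vertices of the Bass-Serre tree $\T$. Every block is either $B_v$ for some component vertex $v$, in which case $\bnd B_v \subset \bnd Y_v$, or $B_w$ for some peripheral vertex $w$; in the latter case the bipartite structure of the maximal peripheral splitting forces $w$ to be adjacent to at least one component vertex $v$, and then $\bnd B_w \subset \bnd Y_v$ by the definition of $Y_v$. By the central result Theorem \ref{Y is pconn}, each $\bnd Y_v$ is path connected.

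To glue the pieces together, observe that if $v$ and $v'$ are distinct component vertices at distance two in $\T$, they share a common adjacent peripheral vertex $w$, and then $F_w \subset Y_v \cap Y_{v'}$, so $\bnd F_w \subset \bnd Y_v \cap \bnd Y_{v'}$ is non-empty (it contains at least the sphere or line-pair that bounds $F_w$). Since $\bnd Y_v$ and $\bnd Y_{v'}$ are path connected and overlap non-trivially, their union is path connected. Inducting along a geodesic between component vertices in $\T$, any two points of $\partial_R X$ can be connected by a path through a finite chain $\bnd Y_{v_1}\cup\cdots\cup \bnd Y_{v_n}$ of consecutively overlapping, path connected pieces. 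Combining this with (b) and Proposition \ref{lonely plus rational is pconn} finishes the argument.

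The main obstacle is Theorem \ref{Y is pconn}, whose proof occupies Sections \ref{section:decomp}, \ref{section:proper and cocompact}, and \ref{section:local conn at flats}: the boundary of a component vertex space $X_v$ can be Cantor-like (as in Example \ref{example: punctured tori}), so path connectivity of $\bnd Y_v$ is not automatic and requires a detailed analysis of how the attached flat boundaries interact with $\bnd X_v$ and an application of decomposition theory via the Bowditch boundary of $(G_v,\bP_v)$. Once that work is in hand, the combination argument above is essentially bookkeeping along the Bass-Serre tree.
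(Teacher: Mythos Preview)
Your proposal is correct and follows essentially the same route as the paper: reduce to Proposition \ref{lonely plus rational is pconn} via Lemma \ref{IFP blocks are lonely}, express $\partial_R X$ as $\bigcup_{v\in\C}\bnd Y_v$, and invoke Theorem \ref{Y is pconn} for each piece. In fact you are slightly more explicit than the paper, which simply asserts that $\partial_R X = \bigcup_{v\in\C}\bnd Y_v$ is path connected without spelling out the overlap-and-induct argument along $\T$; your observation that component vertices at distance two share a peripheral flat whose boundary lies in both $\bnd Y_v$ and $\bnd Y_{v'}$ is exactly the glue that the paper leaves implicit.
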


\begin{proof}
	Let $(G,\bP)$ be a $\cat(0)$ group with isolated flats. It suffices to show there is a space $X$ on which $G$ acts which has path connected visual boundary. Let $\G$ be the maximal peripheral splitting for $(G,\bP)$, $X$ be the tree-of-spaces on which $G$ acts geometrically, and $\B$ the block decomposition coming from this peripheral splitting.
	
	Each peripheral vertex group in the splitting is virtually $\Z^n$ for some $n$, so has path connected visual boundary. If each component vertex has path connected visual boundary as well, then by Theorem \ref{combo} $\bnd X$ is path connected. By Lemma \ref{IFP blocks are lonely}, the irrational rays are all lonely, so we can apply the theorem.
	
	If there is a peripheral vertex which does not have path connected visual boundary, then by Theorem \ref{Y is pconn}, $\partial_R X$ is path connected. This is true because
	$$\partial_R X = \bigcup_{v\in \C} \bnd Y_v$$
	where $\C$ is the collection of component vertices in the tree $\T$ for the splitting $\G$. Therefore, we can apply Proposition \ref{lonely plus rational is pconn} to conclude that $\bnd X$ is path connected.
\end{proof}

\bibliographystyle{alpha}
\bibliography{benzvibib}	

\begin{thebibliography}{{Hau}17}

\bibitem[Bes96]{Bes96}
Mladen Bestvina.
\newblock Local homology properties of boundaries of groups.
\newblock {\em Michigan Math. J.}, 43(1):123--139, 1996.

\bibitem[BH99]{BH99}
Martin~R. Bridson and Andr\'e Haefliger.
\newblock {\em Metric spaces of non-positive curvature}, volume 319 of {\em
  Grundlehren der Mathematischen Wissenschaften [Fundamental Principles of
  Mathematical Sciences]}.
\newblock Springer-Verlag, Berlin, 1999.

\bibitem[BM91]{BestvinaMess}
Mladen Bestvina and Geoffrey Mess.
\newblock The boundary of negatively curved groups.
\newblock {\em J. Amer. Math. Soc.}, 4(3):469--481, 1991.

\bibitem[Bow98]{BowCutPoints}
Brian~H. Bowditch.
\newblock Cut points and canonical splittings of hyperbolic groups.
\newblock {\em Acta Math.}, 180(2):145--186, 1998.

\bibitem[Bow01]{Bow01}
B.~H. Bowditch.
\newblock Peripheral splittings of groups.
\newblock {\em Trans. Amer. Math. Soc.}, 353(10):4057--4082, 2001.

\bibitem[Bow12]{Bow12}
B.~H. Bowditch.
\newblock Relatively hyperbolic groups.
\newblock {\em Internat. J. Algebra Comput.}, 22(3):1250016, 66, 2012.

\bibitem[CK00]{CK00}
Christopher~B. Croke and Bruce Kleiner.
\newblock Spaces with nonpositive curvature and their ideal boundaries.
\newblock {\em Topology}, 39(3):549--556, 2000.

\bibitem[CK02]{CK02}
C.~B. Croke and B.~Kleiner.
\newblock The geodesic flow of a nonpositively curved graph manifold.
\newblock {\em Geom. Funct. Anal.}, 12(3):479--545, 2002.

\bibitem[CMT06]{CMT}
G.~Conner, M.~Mihalik, and S.~Tschantz.
\newblock Homotopy of ends and boundaries of {CAT}(0) groups.
\newblock {\em Geom. Dedicata}, 120:1--17, 2006.

\bibitem[Dav86]{Dav86}
Robert~J. Daverman.
\newblock {\em Decompositions of manifolds}, volume 124 of {\em Pure and
  Applied Mathematics}.
\newblock Academic Press, Inc., Orlando, FL, 1986.

\bibitem[Far98]{FarbBoundedCoset}
B.~Farb.
\newblock Relatively hyperbolic groups.
\newblock {\em Geom. Funct. Anal.}, 8(5):810--840, 1998.

\bibitem[{Geo}17]{GeoghPconnSemistable}
Ross {Geoghegan}.
\newblock {Semistability and CAT(0) Geometry}.
\newblock {\em arXiv e-prints}, page arXiv:1703.07003, Mar 2017.

\bibitem[GM08]{GrovesManningCusped}
Daniel Groves and Jason~Fox Manning.
\newblock Dehn filling in relatively hyperbolic groups.
\newblock {\em Israel J. Math.}, 168:317--429, 2008.

\bibitem[{Hau}17]{Ha17}
M.~{Haulmark}.
\newblock {Boundary classification and 2-ended splittings of groups with
  isolated flats}.
\newblock {\em ArXiv e-prints}, April 2017.

\bibitem[HK05]{HK05}
G.~Christopher Hruska and Bruce Kleiner.
\newblock Hadamard spaces with isolated flats.
\newblock {\em Geom. Topol.}, 9:1501--1538, 2005.
\newblock With an appendix by the authors and Mohamad Hindawi.

\bibitem[HK09]{HK09}
G.~Christopher Hruska and Bruce Kleiner.
\newblock Erratum to: ``{H}adamard spaces with isolated flats'' [{G}eom.
  {T}opol. {\bf 9} (2005), 1501--1538; mr2175151].
\newblock {\em Geom. Topol.}, 13(2):699--707, 2009.

\bibitem[HR17]{HR17}
G.~C. {Hruska} and K.~{Ruane}.
\newblock {Connectedness properties and splittings of groups with isolated
  flats}.
\newblock {\em ArXiv e-prints}, May 2017.

\bibitem[HR19]{HruskaRuaneSemistable}
G.~Christopher {Hruska} and Kim {Ruane}.
\newblock {Hierarchies and semistability of relatively hyperbolic groups}.
\newblock {\em arXiv e-prints}, page arXiv:1904.12947, Apr 2019.

\bibitem[Hru10]{HruskaRelHypEquiv}
G.~Christopher Hruska.
\newblock Relative hyperbolicity and relative quasiconvexity for countable
  groups.
\newblock {\em Algebr. Geom. Topol.}, 10(3):1807--1856, 2010.

\bibitem[Mih96]{MihalikArtinCoxeterSemistable}
Michael~L. Mihalik.
\newblock Semistability of {A}rtin and {C}oxeter groups.
\newblock {\em J. Pure Appl. Algebra}, 111(1-3):205--211, 1996.

\bibitem[Moo10]{Moo10}
Christopher Mooney.
\newblock Generalizing the {C}roke-{K}leiner construction.
\newblock {\em Topology Appl.}, 157(7):1168--1181, 2010.

\bibitem[MS17]{MihalikSwensonSemistable}
Michael L. {Mihalik} and Eric {Swenson}.
\newblock {Relatively Hyperbolic Groups have Semistabile Fundamental Group at
  Infinity}.
\newblock {\em arXiv e-prints}, page arXiv:1709.02420, Sep 2017.

\bibitem[OS15]{OsajdaSwic}
Damian Osajda and Jacek \'{S}wi\c{a}tkowski.
\newblock On asymptotically hereditarily aspherical groups.
\newblock {\em Proc. Lond. Math. Soc. (3)}, 111(1):93--126, 2015.

\bibitem[Swa96]{Swarup}
G.~A. Swarup.
\newblock On the cut point conjecture.
\newblock {\em Electron. Res. Announc. Amer. Math. Soc.}, 2(2):98--100, 1996.

\bibitem[Tra13]{Tran13}
Hung~Cong Tran.
\newblock Relations between various boundaries of relatively hyperbolic groups.
\newblock {\em Internat. J. Algebra Comput.}, 23(7):1551--1572, 2013.

\bibitem[Wil70]{Wil70}
Stephen Willard.
\newblock {\em General topology}.
\newblock Addison-Wesley Publishing Co., Reading, Mass.-London-Don Mills, Ont.,
  1970.

\bibitem[Yam04]{Yaman}
Asli Yaman.
\newblock A topological characterisation of relatively hyperbolic groups.
\newblock {\em J. Reine Angew. Math.}, 566:41--89, 2004.

\end{thebibliography}
\end{document}